\setlist[enumerate,1]{label=(\alph*)}
\DeclareMathOperator{\der}{der}
\DeclareMathOperator{\Tor}{Tor}
\title{The Structure of Root Data and Smooth Regular Embeddings of Reductive Groups}
\author{Jay Taylor}
\keywords{Reductive groups, root data, smooth regular embeddings}
\begin{document}
\begin{abstract}
We investigate the structure of root data by considering their decomposition as a product of a semisimple root datum and a torus. Using this decomposition we obtain a parameterisation of the isomorphism classes of all root data. By working at the level of root data we introduce the notion of a smooth regular embedding of a connected reductive algebraic group, which is a refinement of the commonly used regular embeddings introduced by Lusztig. In the absence of Steinberg endomorphisms such embeddings were constructed by Benjamin Martin.

\indent In an unpublished manuscript Asai proved three key reduction techniques that are used for reducing statements about arbitrary connected reductive algebraic groups, equipped with a Frobenius endomorphism, to those whose derived subgroup is simple and simply connected. By using our investigations into root data we give new proofs of Asai's results and generalise them so that they are compatible with Steinberg endomorphisms. As an illustration of these ideas, we answer a question posed to us by Olivier Dudas concerning unipotent supports.
\end{abstract}

\section{Introduction}
\begin{pa}
Let $\mathbb{K}$ be an algebraically closed field of characteristic $p \geqslant 0$ then we say $\bG$ is a $\mathbb{K}$-group if it is an algebraic group defined over $\mathbb{K}$. We will denote by $\mathscr{G}_{\mathbb{K}}$ the set of isomorphism classes of connected reductive $\mathbb{K}$-groups. In this article we will be concerned with the following natural question.
\end{pa}

\begin{prob}\label{prob:iso-alg-grps}
Give a nice parameterisation of the set of isomorphism classes $\mathscr{G}_{\mathbb{K}}$.
\end{prob}

\begin{pa}\label{pa:subtleties}
Here the term nice is subjective. However, for each element of $\mathscr{G}_{\mathbb{K}}$ we would like to give it a computable label which distinguishes it uniquely. If $p > 0$ then it is difficult to approach \cref{prob:iso-alg-grps} in the language of algebraic groups. This is primarily because a bijective morphism of algebraic groups need not be an isomorphism in positive characteristic. For instance, if $p > 0$, then the product map $\SL_p(\mathbb{K}) \times \mathbb{G}_m \to \GL_p(\mathbb{K})$, where $\mathbb{G}_m$ denotes the multiplicative group of the field, is a bijective morphism of algebraic groups but the $\mathbb{K}$-groups $\SL_p(\mathbb{K}) \times \mathbb{G}_m$ and $\GL_p(\mathbb{K})$ are not isomorphic.
\end{pa}

\begin{pa}
To get around these subtleties we will provide an answer to \cref{prob:iso-alg-grps} using the language of root data. If $V$ is a set then we denote by $\mathbb{Q}V = \mathbb{Q} \otimes_{\mathbb{Z}} \mathbb{Z}V$ the $\mathbb{Q}$-vector space obtained from the free $\mathbb{Z}$-module $\mathbb{Z}V$ by extending scalars. With this we recall that, roughly, a root datum is a quadruple $(X,\Phi,\widecheck{X},\widecheck{\Phi})$ where $X$ and $\widecheck{X}$ are a pair of finite rank free $\mathbb{Z}$-modules in duality and $\Phi \subseteq X \subseteq \mathbb{Q}X$ and $\widecheck{\Phi} \subseteq \widecheck{X} \subseteq \mathbb{Q}\widecheck{X}$ are root systems. Let $\mathscr{R}$ denote the set of isomorphism classes of root data. By a classical theorem of Chevalley we have a bijective map $\mathscr{G}_{\mathbb{K}} \to \mathscr{R}$ defined by $\bG \mapsto \mathcal{R}(\bG)$, where $\mathcal{R}(\bG)$ is the root datum of $\bG$ defined with respect to some (any) maximal torus of $\bG$. Hence, \cref{prob:iso-alg-grps} is equivalent to the following.
\end{pa}

\begin{prob}\label{prob:iso-root-data}
Give a nice parameterisation of the set of isomorphism classes $\mathscr{R}$.
\end{prob}

\begin{rem}
To get around the subtleties in \cref{pa:subtleties} one could also work with group schemes. However, by working with root data we are able to obtain qualitative statements, such as \cref{prop:aut-ab-lift}. We also hope our work here will have applications to the implementation of algebraic groups, via root data, in computer algebra systems such as GAP, see \cite{geck-hiss:1996:CHEVIE,michel:2015:the-development-version-of-CHEVIE}.
\end{rem}

\begin{pa}
The first main goal of this paper is to give a complete solution to \cref{prob:iso-root-data}. Let us outline our approach to this problem by describing our strategy in the language of groups. For the purposes of this description we will ignore the subtleties described above; the reader can assume $\mathbb{K} = \mathbb{C}$ if they wish. It is well known that each connected reductive $\mathbb{K}$-group $\bG$ is a product $\bG_{\der}Z^{\circ}(\bG)$ of its derived subgroup $\bG_{\der} \leqslant \bG$, which is a semisimple group, and connected centre $Z^{\circ}(\bG) \leqslant \bG$, which is a torus. Another way of saying this is that the natural product homomorphism $\bG_{\der} \times Z^{\circ}(\bG) \to \bG$ is surjective. The kernel of this homomorphism is given by the subgroup $K(\bG) := \{(g,g^{-1}) \mid g \in \bG_{\der} \cap Z^{\circ}(\bG)\}$, which is a finite abelian group.
\end{pa}

\begin{pa}
If $\phi : \bG \to \bG'$ is an isomorphism then certainly this restricts to isomorphisms $\bG_{\der} \to \bG_{\der}'$, $Z^{\circ}(\bG) \to Z^{\circ}(\bG')$, and $K(\bG) \to K(\bG')$. Hence, a starting point for the classification is to classify, up to isomorphism, all triples $(\bG,\bT,K)$ consisting of: a semisimple group $\bG$, a torus $\bT$, and a finite abelian group $K$. However each such class of groups has a well-known classification result. For instance: semisimple groups are classified by a root system and a subgroup of its fundamental group, tori are classified by their dimension, and finite abelian groups are classified by their invariant factors.
\end{pa}

\begin{pa}
Now one can define various closed embeddings $\pi : K \to \bG \times \bT$ of the finite abelian group $K$ such that the quotient $(\bG\times\bT)/\pi(K)$ is a connected reductive algebraic group. To finish the classification one needs to parameterise those embeddings $\pi$ giving rise to non-isomorphic connected reductive algebraic groups. As one would expect the automorphism group of $K$ plays a role here. One of the main results of this paper, see \cref{thm:smooth-reg-embed-classification}, effectively gives a parameterisation of the groups $(\bG\times\bT)/\pi(K)$ in terms of $\Aut(K)$.
\end{pa}

\begin{pa}
Once one has obtained such a parameterisation it is possible to ask the following natural question: How many groups, up to isomorphism, give rise to a fixed triple $(\bG,\bT,K)$, taken up to isomorphism? It is, perhaps, not so surprising that when the dimension of $\bT$ is large there is only one group, up to isomorphism, yielding the triple $(\bG,\bT,K)$. However, using our parameterisation, together with work of Diaconis--Graham \cite{diaconis-graham:1999:the-graph-of-generating-sets}, we see that ``large'' here is in fact surprisingly small, see \cref{prop:aut-ab-lift}.
\end{pa}

\subsection{Smooth Regular Embeddings}
\begin{pa}
Let us now discuss the second main focus of this article, namely regular embeddings, but, first, some notation. If $A$ is a finitely generated $\mathbb{Z}$-module then we denote by $\Tor(A) \leqslant A$ the torsion subgroup of $A$ and we denote by $\Tor_p(A) \leqslant \Tor(A)$ its $p$-torsion subgroup. We then set $\Tor_{p'}(A) = \Tor(A)/\Tor_p(A)$ and $A_{p'} = A/\Tor_p(A)$. Note that $\Tor_{p'}(A) = \Tor(A_{p'})$ is the torsion subgroup of $A_{p'}$ so $A_{p'}$ has no $p$-torsion.
\end{pa}

\begin{pa}
Assume $\bG$ is a connected reductive $\mathbb{K}$-group with root datum $\mathcal{R}(\bG) = (X,\Phi,\widecheck{X},\widecheck{\Phi})$. By \cite[4.1]{bonnafe:2006:sln} we have $X(Z(\bG)/Z^{\circ}(\bG)) \cong \Tor_{p'}(X/\mathbb{Z}\Phi)$ so $Z(\bG)$ is connected if and only if $\Tor_{p'}(X/\mathbb{Z}\Phi) = \{0\}$ or, equivalently, $\Tor(X/\mathbb{Z}\Phi) = \Tor_p(X/\mathbb{Z}\Phi)$. We say $\bG$ has a connected and \emph{smooth} centre if $\Tor(X/\mathbb{Z}\Phi) = \{0\}$. This is equivalent to the condition that the unique connected reductive $\mathbb{C}$-group $\bG_{\mathbb{C}}$ satisfying $\mathcal{R}(\bG_{\mathbb{C}}) \cong \mathcal{R}(\bG)$ has a connected centre. Alternatively, it is equivalent to the scheme theoretic centre of $\bG$ (viewed as a group scheme) being connected and smooth.
\end{pa}

\begin{pa}
A closed embedding $\iota : \bG \to \widetilde{\bG}$ between $\mathbb{K}$-groups is said to be a derived embedding if $\iota(\bG_{\der}) = \widetilde{\bG}_{\der}$. Following Lusztig \cite[7]{lusztig:1988:reductive-groups-with-a-disconnected-centre} we say a derived embedding $\iota : \bG \to \widetilde{\bG}$ is a \emph{regular embedding} if $\widetilde{\bG}$ is a connected reductive $\mathbb{K}$-group with a connected centre. In this paper we call a regular embedding $\iota : \bG \to \widetilde{\bG}$ a \emph{smooth regular embedding} if $Z(\widetilde{\bG})$ is connected and smooth. If $p = 0$ then there is no difference between a regular embedding and a smooth regular embedding. However, if $p > 0$ then smooth regular embeddings are somewhat better behaved. For instance, we have a natural closed embedding $\SL_p(\mathbb{K}) \to \SL_p(\mathbb{K}) \times \mathbb{G}_m$ which is a regular embedding but not a smooth regular embedding. However the natural embedding $\SL_n(\mathbb{K}) \to \GL_n(\mathbb{K})$ is always a smooth regular embedding.
\end{pa}

\begin{pa}
We will be particularly interested in these notions when $\bG$ is equipped with a Steinberg/Frobenius endomorphism $F : \bG \to \bG$. In this case we tacitly assume that $\mathbb{K} = \overline{\mathbb{F}_p}$ is an algebraic closure of the finite field $\mathbb{F}_p$ of prime order $p>0$. Note, we take the term Steinberg endomorphism to mean that some power of $F$ is a Frobenius endomorphism. If $\bG$ is equipped with a Steinberg/Frobenius endomorphism $F : \bG \to \bG$ then given any derived, regular, or smooth regular embedding $\iota : \bG \to \widetilde{\bG}$ we require that $\widetilde{\bG}$ is equipped with a Steinberg/Frobenius endomorphism $F : \widetilde{\bG} \to \widetilde{\bG}$ such that $\iota\circ F = F \circ \iota$. This additional condition complicates matters considerably.
\end{pa}

\begin{pa}
If $\bG$ is a connected reductive $\mathbb{K}$-group, possibly equipped with a Steinberg/Frobenius endomorphism, then Deligne--Lusztig constructed a regular embedding $\iota : \bG \to \widetilde{\bG}$ in \cite[5.18]{deligne-lusztig:1976:representations-of-reductive-groups}. Hence regular embeddings always exist. By mimicking their construction at the level of root data we are able to construct a smooth regular embedding $\iota : \bG \to \widetilde{\bG}$ thus showing that smooth regular embeddings always exist, see \cref{lem:smooth-reg-embed}. In \cref{lem:no-smooth-embedding-suzuki} we give an example of the complexities that can be introduced by Steinberg endomorphisms.
\end{pa}

\begin{rem}
It was kindly pointed out to us by the referee that our construction has appeared before in the work of Benjamin Martin, see \cite[Theorem 4.5]{martin:1999:etale-slices-for-representation-varieties}. Our contribution is to note that this construction is compatible with Steinberg/Frobenius endomorphisms.
\end{rem}

\subsection{Asai's Reduction Techniques}
\begin{pa}
In an unpublished manuscript \cite{asai:1985:endomorphism-algebras} Asai introduced several reduction techniques for working with connected reductive $\mathbb{K}$-groups equipped with Frobenius endomorphisms. These techniques are used to show that a statement concerning an arbitrary connected reductive $\mathbb{K}$-group $\bG$ equipped with a Frobenius endomorphism holds by reducing to the case where the derived subgroup of $\bG$ is simple and simply connected. These ideas have been used extensively throughout the literature. For instance, they are used by: Lusztig \cite{lusztig:1984:characters-of-reductive-groups,lusztig:1988:reductive-groups-with-a-disconnected-centre} in the classification of irreducible characters, Geck \cite{geck:1996:on-the-average-values} to prove the existence of unipotent supports, and the author \cite{taylor:2016:GGGRs-small-characteristics} to prove the existence of wave-front sets.
\end{pa}

\begin{pa}
Our main purpose for introducing smooth regular embeddings is to give new proofs for Asai's reduction techniques. One significant up-shot of our proofs is that they allow us to obtain these statements for Steinberg endomorphisms and not just for Frobenius endomorphisms. We note that in the absence of Steinberg/Frobenius endomorphisms these results are much easier to obtain. Our first main result in this direction is the following, which is a strengthened form of \cite[2.3.2]{asai:1985:endomorphism-algebras}.
\end{pa}

\begin{thm}\label{lem:completion-of-embedding}
Assume $\bG$ is a connected reductive $\mathbb{K}$-group and $\sigma_i : \bG \to \bG_i$ are derived embeddings with $i \in \{1,2\}$. Then there exists a connected reductive $\mathbb{K}$-group $\bG'$ and smooth regular embeddings $\sigma_i' : \bG_i \hookrightarrow \bG'$, with $i \in \{1,2\}$, such that the following diagram commutes
\begin{equation*}
\begin{tikzpicture}
\matrix (m) [matrix of math nodes, row sep=3em, column sep=3em, text height=1.5ex, text depth=0.25ex]
{ \bG & \bG_1\\
  \bG_2 & \bG'\\ };

\path [>=stealth,->]	(m-1-1) edge node[font=\small,above,text height=1.5ex,text depth=0.25ex,->] {$\sigma_1$} (m-1-2)
		(m-1-1) edge node[font=\small,left,text height=1.5ex,text depth=0.25ex] {$\sigma_2$} (m-2-1)
		(m-1-2) edge node[font=\small,right,text height=1.5ex,text depth=0.25ex] {$\sigma_1'$} (m-2-2)
		(m-2-1) edge node[font=\small,below,text height=1.5ex,text depth=0.25ex] {$\sigma_2'$} (m-2-2);
\end{tikzpicture}
\end{equation*}
\end{thm}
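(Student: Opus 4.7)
The plan is to construct $\bG'$ by first forming a ``pushout'' of $\bG_1$ and $\bG_2$ over $\bG$ at the level of root data (which dually is a pullback of character lattices), and then enlarging the result to achieve a smooth connected centre. Writing $\mathcal{R}(\bG) = (X,\Phi,\widecheck{X},\widecheck{\Phi})$ and $\mathcal{R}(\bG_i) = (X_i,\Phi_i,\widecheck{X}_i,\widecheck{\Phi}_i)$, each derived embedding $\sigma_i$ dualises to a surjection $\pi_i : X_i \twoheadrightarrow X$ restricting to a bijection $\Phi_i \to \Phi$; dually there is an inclusion $\jmath_i : \widecheck{X} \hookrightarrow \widecheck{X}_i$ sending each coroot $\widecheck{\alpha}$ to $\widecheck{\alpha}_i$. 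I then form the pullback $X' := \{(x_1,x_2) \in X_1 \oplus X_2 : \pi_1(x_1) = \pi_2(x_2)\}$ and the dual pushout $\widecheck{X}' := (\widecheck{X}_1 \oplus \widecheck{X}_2)/\{(\jmath_1(\widecheck{x}),-\jmath_2(\widecheck{x})) : \widecheck{x} \in \widecheck{X}\}$. Because each $\pi_i$ is a split surjection (its target being free), $X'$ is a free $\mathbb{Z}$-module, the projections $p_i : X' \twoheadrightarrow X_i$ are surjective, and the induced pairing $X' \times \widecheck{X}' \to \mathbb{Z}$ is perfect, with well-definedness following from the pullback relation. Taking $\Phi' \subseteq X'$ to be the diagonal image of $\Phi$ with coroot $\widecheck{\alpha}' \in \widecheck{X}'$ the common class of $\widecheck{\alpha}_1$ and $\widecheck{\alpha}_2$ (which agree because $\jmath_i(\widecheck{\alpha}) = \widecheck{\alpha}_i$) then yields a root datum $\mathcal{R}'$, the axioms reducing to those for the $\mathcal{R}(\bG_i)$.

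Now let $\bG^{\circ}$ be a connected reductive $\mathbb{K}$-group realising $\mathcal{R}'$. The surjections $p_i$ correspond to closed derived embeddings $\tau_i : \bG_i \hookrightarrow \bG^{\circ}$, and the equality $\pi_1 \circ p_1 = \pi_2 \circ p_2$ on character lattices translates to $\tau_1 \sigma_1 = \tau_2 \sigma_2$, so the square commutes. Since $\bG^{\circ}$ need not have smooth connected centre, I would next apply \cref{lem:smooth-reg-embed} to $\bG^{\circ}$ to obtain a smooth regular embedding $\iota : \bG^{\circ} \hookrightarrow \bG'$, and set $\sigma_i' := \iota \circ \tau_i$. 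By transitivity of the derived-embedding condition, each $\sigma_i'$ is a derived embedding into a group with smooth connected centre and so is a smooth regular embedding, while the enlarged square still commutes.

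The main obstacle is the Steinberg/Frobenius equivariance. Given compatible $F$-actions on $\bG$ and the $\bG_i$, the pullback/pushout construction of root data is functorial, so $F$ induces an automorphism of $\mathcal{R}'$ realising a Steinberg endomorphism of $\bG^{\circ}$ commuting with each $\tau_i$. The delicate step is to carry out the final enlargement $\bG^{\circ} \hookrightarrow \bG'$ equivariantly; this is precisely the content of the $F$-equivariant form of \cref{lem:smooth-reg-embed}, which I would invoke to conclude.
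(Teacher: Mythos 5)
Your root-datum construction is sound and is close in spirit to the paper's: the paper also forms the pullback $S = \{(x_1,x_2) \in X_1 \oplus X_2 : \sigma_1^*(x_1) = \sigma_2^*(x_2)\}$, though it uses it only as a torus and immediately takes the central product $\mathcal{R} \oplus_{(A,h,f)} \mathcal{S}$ with $A = X/\mathbb{Z}\Phi$, which is torsion-free at a stroke by \cref{lem:fibre-prod}, whereas you first form the fibre-product root datum and then smooth it with a second invocation of \cref{lem:smooth-reg-embed}. Both routes are legitimate at the level of root data and produce a group of the same rank.

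The genuine gap is in passing from the commuting square of root data to a commuting square of groups, especially in the presence of $F$. You write that $\pi_1 \circ p_1 = \pi_2 \circ p_2$ ``translates to'' $\tau_1\sigma_1 = \tau_2\sigma_2$, and that functoriality gives a Steinberg endomorphism of $\bG^{\circ}$ ``commuting with each $\tau_i$''. Neither of these is automatic: \cref{thm:full-functor} realises a $p$-morphism only up to post-composition with $\Inn t$, $t$ in the relevant torus. In the absence of $F$ this is a harmless adjustment (modify $\tau_2$ by an inner automorphism), but once you demand $F$-equivariance you must realise three things simultaneously --- the square $\tau_1\sigma_1 = \tau_2\sigma_2$, the relation $F^{\circ}\circ\tau_1 = \tau_1\circ F_1$, and the relation $F^{\circ}\circ\tau_2 = \tau_2\circ F_2$ --- and the inner-automorphism ambiguity does not allow you to fix all three by adjusting one map. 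This is precisely what the paper's machinery of pinnings and \cref{prop:unique-morphism} is for: by fixing compatible pinnings on $\bG$, $\bG_i$, and $\bG'$ one obtains strict uniqueness of the realising isotypies, so that commutativity of the based root data diagram forces commutativity of the group diagram and of the $F$-squares with no slack to absorb. You flag the final enlargement $\bG^{\circ}\hookrightarrow\bG'$ as the delicate step, but that one is already covered by \cref{lem:smooth-reg-embed}; the genuinely delicate step is the simultaneous equivariant lifting of the $\tau_i$, which your sketch dismisses as functoriality. Without an argument of the pinning type (or an equivalent rigidification), this step does not go through.
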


\begin{pa}\label{pa:central-morphism}\label{def:smooth-cover}
To state the second reduction technique we will need to introduce a notion which is dual to that of a smooth regular embedding. We say a homomorphism of $\mathbb{K}$-groups $\pi : \widetilde{\bG} \to \bG$ is a \emph{smooth covering} if it is a surjective central homomorphism such that: $\widetilde{\bG}$ is a connected reductive $\mathbb{K}$-group, $\Ker(\pi)$ is a torus, the derived subgroup of $\widetilde{\bG}$ is simply connected, and $Z(\widetilde{\bG})$ is connected/smooth if $Z(\bG)$ is. As before, if $\bG$ is endowed with a Steinberg/Frobenius endomorphism $F : \bG \to \bG$ then we additionally require that $\widetilde{\bG}$ is endowed with a Steinberg/Frobenius endomorphism $F : \widetilde{\bG} \to \widetilde{\bG}$ such that $\pi\circ F = F\circ\pi$. The following is a strengthened form of \cite[2.3.1]{asai:1985:endomorphism-algebras}.
\end{pa}

\begin{thm}\label{prop:quotient-by-sc-der}
For any connected reductive $\mathbb{K}$-group $\bG$ there exists a smooth covering $\pi : \widetilde{\bG} \to \bG$.
\end{thm}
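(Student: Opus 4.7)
My strategy is to dualise the construction of a smooth regular embedding (see \cref{lem:smooth-reg-embed}) and work entirely at the level of root data. Write the root datum of $\bG$ as $(X, \Phi, \widecheck{X}, \widecheck{\Phi})$. I set
\[
\widecheck{\widetilde{X}} := \mathbb{Z}\widecheck{\Phi} \oplus \widecheck{X}, \qquad \widecheck{\widetilde{\Phi}} := \{(\widecheck{\alpha}, 0) \mid \widecheck{\alpha} \in \widecheck{\Phi}\},
\]
and define a surjection $\pi_{*} : \widecheck{\widetilde{X}} \to \widecheck{X}$ by $(m,x) \mapsto \iota(m) + x$, where $\iota : \mathbb{Z}\widecheck{\Phi} \hookrightarrow \widecheck{X}$ is the inclusion of the coroot lattice. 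Dualising produces an injection $X \hookrightarrow \widetilde{X} := \mathrm{Hom}(\widecheck{\widetilde{X}},\mathbb{Z})$, and I take $\widetilde{\Phi}$ to be the image of $\Phi$ so that $(\widetilde{X},\widetilde{\Phi},\widecheck{\widetilde{X}},\widecheck{\widetilde{\Phi}})$ is a root datum. By the Chevalley classification this is realised by a connected reductive $\mathbb{K}$-group $\widetilde{\bG}$ together with a homomorphism $\pi : \widetilde{\bG} \to \bG$ inducing $\pi_{*}$.

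The defining properties of a smooth covering are then verified directly. The kernel of $\pi_{*}$ is $\{(m,-\iota(m)) \mid m \in \mathbb{Z}\widecheck{\Phi}\}$, a free module of rank equal to the semisimple rank of $\bG$, so $\Ker(\pi)$ is a torus; since any such element pairs trivially with every $\widetilde{\alpha} \in \widetilde{\Phi}$ (as these lie in the image of $X$), we have $\Ker(\pi) \leqslant Z(\widetilde{\bG})$, hence $\pi$ is surjective and central. The coroot lattice $\mathbb{Z}\widecheck{\widetilde{\Phi}} = \mathbb{Z}\widecheck{\Phi} \oplus 0$ is a direct summand of $\widecheck{\widetilde{X}}$, in particular saturated, so $\widetilde{\bG}_{\der}$ is simply connected. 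Finally, dualising the short exact sequence $0 \to \Ker(\pi_{*}) \to \widecheck{\widetilde{X}} \to \widecheck{X} \to 0$ yields $0 \to X \to \widetilde{X} \to L \to 0$ with $L := \mathrm{Hom}(\Ker(\pi_{*}),\mathbb{Z})$ free; reducing modulo $\mathbb{Z}\Phi = \mathbb{Z}\widetilde{\Phi}$ gives an exact sequence $0 \to X/\mathbb{Z}\Phi \to \widetilde{X}/\mathbb{Z}\widetilde{\Phi} \to L \to 0$, and since an extension of torsion-free groups is torsion-free, $\widetilde{X}/\mathbb{Z}\widetilde{\Phi}$ inherits torsion-freeness from $X/\mathbb{Z}\Phi$ whenever $Z(\bG)$ is connected and smooth.

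For the Steinberg/Frobenius compatibility, let $F_{*} : \widecheck{X} \to \widecheck{X}$ denote the map induced by $F$. I lift it to $\widetilde{F}_{*} : \widecheck{\widetilde{X}} \to \widecheck{\widetilde{X}}$ by $(m,x) \mapsto (F_{*}(m), F_{*}(x))$; this is well defined because $F_{*}$ preserves $\mathbb{Z}\widecheck{\Phi}$, it commutes with $\pi_{*}$, and it sends $\widecheck{\widetilde{\Phi}}$ to itself. By the isogeny theorem this corresponds to an endomorphism $\widetilde{F} : \widetilde{\bG} \to \widetilde{\bG}$ with $\pi\circ\widetilde{F} = F\circ\pi$. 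If some power $F^n$ is Frobenius with $F_{*}^n = q\sigma$ for a prime power $q$ and a finite-order root-datum automorphism $\sigma$, then $\widetilde{F}_{*}^n = q\widetilde{\sigma}$ with $\widetilde{\sigma}(m,x) := (\sigma(m),\sigma(x))$ of the same order, so $\widetilde{F}^n$ is Frobenius and $\widetilde{F}$ is Steinberg (taking $n=1$ when $F$ is itself Frobenius).

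The principal difficulty I anticipate is the Steinberg/Frobenius compatibility: one must justify that the naive lift $\widetilde{F}$ really is a Steinberg endomorphism at the group level rather than merely an algebraic-group morphism, which requires the precise correspondence between Steinberg endomorphisms of reductive groups and certain endomorphisms of root data. The paper's careful treatment of these issues in \cref{lem:smooth-reg-embed,lem:no-smooth-embedding-suzuki} suggests the subtleties are genuine, so this step warrants the most care even though it is essentially formal once the correct framework is in place.
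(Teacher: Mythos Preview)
Your proposal is correct and takes essentially the same approach as the paper: both dualise the smooth regular embedding of \cref{lem:smooth-reg-embed} applied to $\widecheck{\mathcal{R}}$, the only difference being that you unpack the construction explicitly as $\widecheck{\widetilde{X}} = \mathbb{Z}\widecheck{\Phi}\oplus\widecheck{X}$ (this is isomorphic to the paper's fibre product $\widecheck{X}\oplus_{\widecheck{X}/\mathbb{Z}\widecheck{\Phi}}\widecheck{X}$ via $(m,x)\mapsto(m+x,x)$) and verify the covering conditions by hand rather than via the split short exact sequence. On the point you flag as the principal difficulty: the isogeny theorem alone only yields $\pi\circ\widetilde{F} = F\circ\pi$ up to conjugation by an element of $\widetilde{\bT}$, and the paper's \cref{lem:lift-frob} together with \cref{prop:characterisation-of-Steinberg-Frobenius} is precisely what absorbs that ambiguity and certifies $\widetilde{F}$ as a Steinberg/Frobenius endomorphism, so invoking those closes the gap you anticipated. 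One small addition: your exact sequence $0\to X/\mathbb{Z}\Phi\to\widetilde{X}/\mathbb{Z}\widetilde{\Phi}\to L\to 0$ with $L$ free in fact gives $\Tor(\widetilde{X}/\mathbb{Z}\widetilde{\Phi})\cong\Tor(X/\mathbb{Z}\Phi)$, so it also handles the connected-but-not-smooth case required by \cref{def:smooth-cover}, not only the smooth case you state.
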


\begin{rem}
Our proof of \cref{prop:quotient-by-sc-der} uses duality together with the existence of smooth regular embeddings. We note that the idea of using duality and regular embeddings appears in \cite[1.7.13]{geck-malle:2016:reductive-groups-and-steinberg-maps}.
\end{rem}

\begin{pa}\label{pa:permuted-factors}
To state the third, and final, reduction technique we assume that our connected reductive $\mathbb{K}$-group $\bG$ is a direct product $\bG_1 \times \cdots \times \bG_n$ and $F : \bG \to \bG$ is a Steinberg endomorphism such that $F(\bG_i) = \bG_{i+1}$ with the indices computed cyclically. In particular, this implies that $\bG_i$ is abstractly isomorphic to $\bG_{i+1}$ and we have $F^n(\bG_i) = \bG_i$ for all $1 \leqslant i \leqslant n$ so $F^n$ restricts to a Steinberg endomorphism of $\bG_i$. Now we clearly have a natural surjective homomorphism of algebraic groups $\pi_1 : \bG \to \bG_1$ given by projection onto the first factor. The following is a strengthened form of \cite[2.4.2]{asai:1985:endomorphism-algebras}.
\end{pa}

\begin{lem}\label{lem:cyc-perm-smooth-embed}
Let $\bG$ and $F : \bG \to \bG$ be as in \cref{pa:permuted-factors}. There exists a smooth regular embedding $\sigma : \bG \to \bG'$ such that $\bG' = \bG_1' \times \cdots \times \bG_n'$ and $F' : \bG' \to \bG'$ is a Steinberg endomorphism satisfying $F'(\bG_i') = \bG_{i+1}'$ with the indices computed cyclically. Moreover, if $\pi_1 : \bG \to \bG_1$ and $\pi_1' : \bG' \to \bG_1'$ are the natural projection maps then the following diagram is commutative
\begin{equation*}
\begin{tikzpicture}
\matrix (m) [matrix of math nodes, row sep=3em, column sep=3em, text height=1.5ex, text depth=0.25ex]
{ \bG & \bG'\\
  \bG_1 & \bG_1'\\ };

\path [>=stealth,->]	(m-1-1) edge node[font=\small,above,text height=1.5ex,text depth=0.25ex,->] {$\sigma$} (m-1-2)
		(m-1-1) edge node[font=\small,left,text height=1.5ex,text depth=0.25ex] {$\pi_1$} (m-2-1)
		(m-1-2) edge node[font=\small,right,text height=1.5ex,text depth=0.25ex] {$\pi_1'$} (m-2-2)
		(m-2-1) edge node[font=\small,below,text height=1.5ex,text depth=0.25ex] {$\sigma$} (m-2-2);
\end{tikzpicture}
\end{equation*}
and the restriction $\sigma|_{\bG_1} : \bG_1 \to \bG_1'$ is a smooth regular embedding with respect to the Steinberg endomorphism $F^n$ on $\bG_1$.
\end{lem}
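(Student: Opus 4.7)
The plan is to apply \cref{lem:smooth-reg-embed} to the factor $\bG_1$ equipped with the Steinberg endomorphism $F^n|_{\bG_1}$ and then propagate the construction cyclically to all factors. Since $F^n$ preserves each factor, $F^n|_{\bG_1} : \bG_1 \to \bG_1$ is a Steinberg endomorphism, and so \cref{lem:smooth-reg-embed} produces a smooth regular embedding $\tau : \bG_1 \hookrightarrow \bG_1'$ together with a Steinberg endomorphism $F_0 : \bG_1' \to \bG_1'$ satisfying $\tau \circ F^n|_{\bG_1} = F_0 \circ \tau$.

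Setting $\bG_i' := \bG_1'$ for each $i = 1, \ldots, n$ and $\bG' := \bG_1' \times \cdots \times \bG_n'$, I would define $F' : \bG' \to \bG'$ by
\[
F'(h_1, h_2, \ldots, h_n) := (F_0(h_n), h_1, h_2, \ldots, h_{n-1}).
\]
Then $F'(\bG_i') = \bG_{i+1}'$ cyclically, $(F')^n$ restricts to $F_0$ on each factor, and so $F'$ is a Steinberg endomorphism of the connected reductive $\mathbb{K}$-group $\bG'$.

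The nontrivial step is to construct the embedding $\sigma : \bG \hookrightarrow \bG'$ satisfying $\sigma|_{\bG_1} = \tau$ and the intertwining condition $\sigma \circ F = F' \circ \sigma$. Requiring $\sigma$ to respect the factor decomposition (as is forced by the smooth regular embedding property together with the commutative diagram), and writing $\sigma_i := \sigma|_{\bG_i} : \bG_i \to \bG_i' = \bG_1'$, the intertwining condition becomes
\[
\sigma_{i+1} \circ F|_{\bG_i} = \sigma_i \quad (1 \leq i < n), \qquad \sigma_1 \circ F|_{\bG_n} = F_0 \circ \sigma_n.
\]
Formally these determine $\sigma_{i+1}$ from $\sigma_i$ by inverting $F|_{\bG_i} : \bG_i \to \bG_{i+1}$, but this map is only bijective on points and not generally an isomorphism of algebraic groups; this is the main obstacle.

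To overcome it I would work at the level of root data, using Chevalley's equivalence between pairs (connected reductive group, Steinberg endomorphism) and pairs (root datum, $p$-isogeny). Let $\bT = \bT_1 \times \cdots \times \bT_n$ be an $F$-stable maximal torus with $\bT_i \subseteq \bG_i$, and write $X = \bigoplus_i X_i = X(\bT)$ with $\Phi_i \subseteq X_i$; then $F$ induces a $p$-isogeny $\phi : X \to X$ with $\phi(X_{i+1}) \subseteq X_i$. The embedding $\tau$ corresponds to an inclusion $X_1 \hookrightarrow Y_1$ with $Y_1/\mathbb{Z}\Phi_1$ torsion-free, together with a lift of $\phi^n|_{X_1}$ to $Y_1$. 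Setting $Y_i := Y_1$ for each $i$ and $Y := \bigoplus_i Y_i$, I would define the lift $\phi' : Y \to Y$ to be the identity $Y_1 \to Y_1$ between consecutive factors for $i = 1, \ldots, n-1$ and the chosen lift of $\phi^n|_{X_1}$ between the final factors; the inclusions $X_i \hookrightarrow Y_i = Y_1$ are defined recursively so as to intertwine $\phi$ and $\phi'$. Each quotient $Y_1/\mathbb{Z}\Phi_i$ is isomorphic to $Y_1/\mathbb{Z}\Phi_1$ and hence torsion-free. Translating back via Chevalley's theorem then yields the desired $\sigma$, $\bG'$, and $F'$, and commutativity of the stated diagram together with the fact that $\sigma|_{\bG_1} = \tau$ is a smooth regular embedding for $F^n|_{\bG_1}$ are immediate from the construction.
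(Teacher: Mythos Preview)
There is a genuine gap in the root-datum step. First, a derived embedding $\tau:\bG_1\hookrightarrow\bG_1'$ corresponds on character groups to a \emph{surjection} $\tau^*:X(\bT_1')\twoheadrightarrow X(\bT_1)$, not to an inclusion $X_1\hookrightarrow Y_1$; see \cref{def:der-embed-rd} and \cref{prop:der-embeds-root-data}. With the correct direction, write $p_1:Y_1\to X_1$ for this surjection. Your intertwining condition $\sigma\circ F=F'\circ\sigma$ becomes, on characters, $\phi\circ p_{i+1}=p_i\circ\phi'_i$ where $\phi'_i:Y_{i+1}\to Y_i$ is the $i$th component of $(F')^*$. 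Taking $\phi'_i=\mathrm{id}$ as you propose forces $p_i=\phi|_{X_{i+1}}\circ p_{i+1}$, i.e.\ you must factor the \emph{surjection} $p_1:Y_1\twoheadrightarrow X_1$ through $\phi|_{X_2}:X_2\to X_1$. But $\phi=F^*$ is only injective, with image a proper sublattice (for a standard Frobenius, $\phi(X_2)=qX_1$), so no such $p_2$ exists. This is exactly the obstacle you identified at the group level, and it does not disappear by passing to root data.

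Relatedly, your claim that $Y_1/\mathbb{Z}\Phi_i\cong Y_1/\mathbb{Z}\Phi_1$ is false under your recursive definition: the image of $\mathbb{Z}\Phi_i$ in $Y_1$ is $\iota_1(\phi^{i-1}(\mathbb{Z}\Phi_i))$, which for a Frobenius is $q^{i-1}\iota_1(\mathbb{Z}\Phi_1)$, so the quotient acquires $p$-torsion. The paper avoids all of this by \emph{not} taking $\phi'$ to be the identity between factors. Instead it applies the construction of \cref{lem:smooth-reg-embed} to each factor separately, setting $X_i'=X_i\oplus_{A_i}X_i$ with $A_i=X_i/\mathbb{Z}\Phi_i$, and defines $\psi_i:X_{i+1}'\to X_i'$ by $\psi_i(x,y)=(F^*(x),F^*(y))$. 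Then the first-coordinate projection $X_i'\to X_i$ automatically intertwines $\psi$ with $F^*$, and torsion-freeness of each $X_i'/\mathbb{Z}\Phi_i'$ is built in. The moral is that $F'$ must carry the ``Frobenius weight'' on every step around the cycle, not just at the wrap-around.
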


\subsection{Unipotent Supports}
\begin{pa}\label{pa:uni-supp-setup}
Finally, as an illustration of these reduction techniques we answer a question posed to us by Olivier Dudas concerning unipotent supports. For this, assume $\bG$ is a connected reductive algebraic $\mathbb{K}$-group equipped with a Steinberg endomorphism $F : \bG \to \bG$. Let $\mathcal{O} \subseteq \bG$ be an $F$-stable unipotent class and let us choose a set of representatives $u_1,\dots,u_r \in \mathcal{O}^F$ for the $\bG^F$-classes contained in $\mathcal{O}^F$. If $\chi \in \Irr(\bG^F)$ is an irreducible character then we define the average value of $\chi$ on $\mathcal{O}$ to be
\begin{equation*}
\AV(\mathcal{O},\chi) = \sum_{i=1}^r [A_{\bG}(u_i):A_{\bG}(u_i)^F]\chi(u_i).
\end{equation*}
If $g \in \bG$ then $A_{\bG}(g) = C_{\bG}(g)/C_{\bG}^{\circ}(g)$ denotes the component group of the centraliser. Now, given $\chi \in \Irr(\bG^F)$ one has a corresponding unipotent support $\mathcal{O}_{\chi} \subseteq \bG$, see \cite{lusztig:1992:a-unipotent-support,geck-malle:2000:existence-of-a-unipotent-support}. This is defined to be the unique class of maximal dimension satisfying the condition $\AV(\mathcal{O},\chi) \neq 0$.
\end{pa}

\begin{thm}\label{thm:uni-supp-non-vanishing}
Assume $p$ is a good prime for $\bG$ and $Z(\bG)$ is connected. Then for any $\chi \in \Irr(\bG^F)$ we have $\mathcal{O}_{\chi}$ is the unique class satisfying the following conditions:
\begin{enumerate}
	\item $\chi(u) \neq 0$ for some $u \in \mathcal{O}_{\chi}^F$,
	\item if $g \in \bG^F$ is an element such that $\chi(g) \neq 0$ then $g_{\uni} \in \overline{\mathcal{O}_{\chi}}$.
\end{enumerate}
Here $g_{\uni}$ denotes the unipotent part of $g$.
\end{thm}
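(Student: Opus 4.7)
Uniqueness is automatic from (a) and (b): if $\mathcal{O}$ and $\mathcal{O}'$ both satisfy the two conditions, pick $u \in \mathcal{O}^F$ with $\chi(u) \neq 0$; applying (b) to $\mathcal{O}'$ yields $u = u_{\mathrm{uni}} \in \overline{\mathcal{O}'}$, so $\mathcal{O} \subseteq \overline{\mathcal{O}'}$, and symmetrically $\mathcal{O}' \subseteq \overline{\mathcal{O}}$, forcing $\mathcal{O} = \mathcal{O}'$. Property (a) for $\mathcal{O}_\chi$ itself is the main non-vanishing statement of Geck--Malle on the existence of unipotent supports, valid under our hypotheses on $p$ and $Z(\bG)$. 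The content is therefore property (b).

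For (b), my plan is to induct on $\dim \bG$ and first reduce to the case where $g$ is already unipotent. Write $g = su$ in Jordan decomposition; when $s \neq 1$, the centraliser $\bH := C_\bG(s)$ is a proper connected reductive subgroup of $\bG$ (using that $Z(\bG)$ is connected), and it inherits the hypotheses of the theorem on $p$ and on $Z$. Lusztig's Jordan decomposition of characters rewrites $\chi(su)$ as a linear combination of values $\psi(u)$ for irreducible characters $\psi$ of $\bH^F$; hence non-vanishing of $\chi(g)$ forces $\psi(u) \neq 0$ for some such $\psi$. The inductive hypothesis applied inside $\bH$ puts $u$ in the Zariski closure of the unipotent support $\mathcal{O}_\psi \subseteq \bH$, and the compatibility of Jordan decomposition with unipotent supports through Lusztig--Spaltenstein induction of unipotent classes --- which is monotone with respect to closure --- transfers this to $u \in \overline{\mathcal{O}_\chi}$ inside $\bG$.

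This leaves the base case $s = 1$: for unipotent $u$, show that $\chi(u) \neq 0$ implies $u \in \overline{\mathcal{O}_\chi}$. This is strictly stronger than the dimension inequality $\dim \mathcal{O}_u \leqslant \dim \mathcal{O}_\chi$ defining the unipotent support, and is the main obstacle. Here the reduction techniques of the present paper enter the picture. Apply \cref{prop:quotient-by-sc-der} to pass to a smooth covering $\pi : \widetilde{\bG} \to \bG$ with $\widetilde{\bG}_{\mathrm{der}}$ simply connected and $Z(\widetilde{\bG})$ connected; in good characteristic $\pi$ restricts to an isomorphism of unipotent varieties, so unipotent classes and their closures correspond cleanly under $\pi$, and the irreducible characters of $\bG^F$ lift to characters of $\widetilde{\bG}^F$ with matching unipotent supports. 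Then decompose $\widetilde{\bG}_{\mathrm{der}}$ into $F$-cycles of simple factors and apply \cref{lem:cyc-perm-smooth-embed} to each cycle of length $n$ to reduce to a group whose derived subgroup is simple and simply connected, equipped with the Steinberg endomorphism $F^n$. In this reduced setting one combines the generalised Gelfand--Graev representations $\Gamma_v$, whose supports lie in $\overline{\mathcal{O}_v}$, with Kawanaka's conjecture on wave-front sets (established in good characteristic by the author) to force the support of $\chi$ on unipotent elements to lie inside $\overline{\mathcal{O}_\chi}$, completing the base case and hence the proof.
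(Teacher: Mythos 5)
Your uniqueness argument and the observation that (a) is immediate from the definition $\AV(\mathcal{O}_\chi,\chi)\neq 0$ agree with the paper, and your plan to prove (b) by the reduction chain smooth covering $\to$ $F$-cycles of simple factors $\to$ simple simply connected derived subgroup also matches the paper's use of \cref{prop:quotient-by-sc-der}, \cref{lem:completion-of-embedding}, \cref{lem:uni-supp-embed}, and \cref{lem:cyc-perm-smooth-embed}. The discrepancy, and where a genuine gap appears, is in the reduction to unipotent elements.

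You propose an induction on $\dim\bG$ passing to $\bH=C_\bG(s)$ for $g=su$ with $s\neq 1$, but this does not work as stated. First, $Z(\bG)$ connected does not make $C_\bG(s)$ connected (think of $\PGL_2$); connectedness of centralisers of semisimple elements requires $\bG_{\der}$ simply connected, which you have not yet arranged at this stage, and even then $Z(C_\bG(s))$ need not be connected, so the hypotheses of the theorem do not visibly descend to $\bH$. Second, Lusztig's Jordan decomposition of characters is a bijection between Lusztig series of $\bG^F$ and unipotent characters of the \emph{dual} centraliser $C_{\bG^\star}(s^\star)^{F^\star}$; it is not a formula expressing $\chi(su)$ as a combination of values $\psi(u)$ for $\psi\in\Irr(\bH^F)$, so the inference ``$\chi(g)\neq 0\Rightarrow\psi(u)\neq 0$ for some $\psi$'' is unsupported. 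Third, the transfer ``Jordan decomposition is compatible with unipotent supports via Lusztig--Spaltenstein induction, which is monotone for closure'' is asserted without a reference and is not an available black box.

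The paper avoids all of this by not reducing to unipotent elements at all in the naive sense: after the smooth-covering and $F$-cycle reductions (and the further observation that $\bG_{\der}\cong\SL_n$ can be completed to $\GL_n$, and that $p$ good forces $F$ to be Frobenius, making the results of \cite{taylor:2016:GGGRs-small-characteristics} applicable), one mimics Lusztig's proof of \cite[11.2(iv)]{lusztig:1992:a-unipotent-support} directly. One reduces to the case where $\mathcal{O}_u$ is maximal for the closure order among classes appearing as $g_{\uni}$ with $\chi(g)\neq 0$; under this maximality hypothesis \cite[9.2]{lusztig:1992:a-unipotent-support} gives $\chi|_{\mathcal{O}_u^F}\neq 0$, then \cite[9.10]{lusztig:1992:a-unipotent-support} produces $v\in\mathcal{O}_u^F$ with $\langle\Gamma_v,\chi^\ast\rangle\neq 0$, and the Achar--Aubert theorem (in the form of \cite[14.15, 15.2]{taylor:2016:GGGRs-small-characteristics}) forces $\mathcal{O}_u\subseteq\overline{\mathcal{O}_\chi}$. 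Your base case gestures at the GGGR and wave-front-set circle of ideas but omits the maximality reduction and the precise role of \cite[9.2]{lusztig:1992:a-unipotent-support}, which is exactly what dispenses with the need for the dimension induction you set up.
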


\begin{pa}
The unicity of a class satisfying the conditions of \cref{thm:uni-supp-non-vanishing} is easy. Indeed, assume $\mathcal{O}$ also satisfies the conditions of \cref{thm:uni-supp-non-vanishing} then we must have $\mathcal{O} \subseteq \overline{\mathcal{O}_{\chi}}$ and $\mathcal{O}_{\chi} \subseteq \overline{\mathcal{O}}$. However this implies $\overline{\mathcal{O}} = \overline{\mathcal{O}_{\chi}}$ so $\mathcal{O} = \mathcal{O}_{\chi}$. It thus suffices to show that $\mathcal{O}_{\chi}$ satisfies these two conditions. However (a) obviously holds because $\AV(\mathcal{O}_{\chi},\chi) \neq 0$ so it suffices to show (b) holds; this is done in \cref{thm:unip-supp} using the results in \cite{achar-aubert:2007:supports-unipotents-de-faisceaux,lusztig:1992:a-unipotent-support,taylor:2016:GGGRs-small-characteristics}.
\end{pa}

\begin{pa}
Let us finally consider the assumptions in \cref{thm:uni-supp-non-vanishing}. It is known that the conclusion of \cref{thm:uni-supp-non-vanishing} is false if $p$ is a bad prime, as is pointed out by Geck in \cite[\S1]{geck:1996:on-the-average-values}. Having said this we conjecture that the conclusion of \cref{thm:uni-supp-non-vanishing} holds assuming only that $p$ is a good prime; so allowing $Z(\bG)$ to be disconnected. If $p$ is an acceptable prime (which implies $p$ is good) and $q$ is large enough then the results of \cite{lusztig:1992:a-unipotent-support,taylor:2016:GGGRs-small-characteristics} show that the conclusion of \cref{thm:uni-supp-non-vanishing} holds even when $Z(\bG)$ is disconnected. Unfortunately regular embeddings seem to be of little use in generalising \cref{thm:uni-supp-non-vanishing} to the disconnected centre case.
\end{pa}

\subsection{Outline of the Paper}
\begin{pa}
We first recall, in \cref{sec:presentation-of-Z-mod,sec:root-data}, some standard results and constructions concerning $\mathbb{Z}$-modules and root data that will be used throughout the paper. In \cref{sec:central-products} we introduce the notion of a central product of root data, which is the key construction that is to be used throughout. We prove our main classification result, namely \cref{thm:smooth-reg-embed-classification}, in \cref{sec:struct-class-root-data}. In \cref{sec:smooth-reg-embed} we investigate smooth regular embeddings and prove their existence.
\end{pa}

\begin{pa}
In \cref{sec:asai-red-technique,sec:cyc-perm-factors} we prove our strengthened forms of Asai's reduction techniques, namely \cref{lem:completion-of-embedding,prop:quotient-by-sc-der,lem:cyc-perm-smooth-embed}. In the setting of \cref{pa:permuted-factors} we also consider the relationship between Deligne--Lusztig induction and restriction on $\bG_1^{F^n}$ and $\bG^F$, see \cref{prop:comm-DL-ind}, and consequently Lusztig series, see \cref{cor:L-series-cyc-perm}. In the final section, \cref{sec:unip-supp}, we prove the necessary statements needed to obtain \cref{thm:uni-supp-non-vanishing}.
\end{pa}

\begin{acknowledgments}
The author gratefully acknowledges the financial support of INdAM and the European Commission via an INdAM Marie Curie Fellowship as well as the University of Padova via grants CPDA125818/12 and 60A01-4222/15. He would also like to thank: Meinolf Geck for several useful discussions, Gunter Malle for his comments on a preliminary version of this article, and Alan Logan for pointing us towards \cite{diaconis-graham:1999:the-graph-of-generating-sets}. Finally he thanks an anonymous referee for pointing out \cite{martin:1999:etale-slices-for-representation-varieties}.
\end{acknowledgments}

\section{Presentations of Finite \texorpdfstring{$\mathbb{Z}$}{Z}-Modules}\label{sec:presentation-of-Z-mod}
\begin{assumption}
Throughout all $\mathbb{Z}$-modules are assumed to be finitely generated.
\end{assumption}

\begin{pa}\label{pa:inv-factors}
Let $A \neq \{0\}$ be a finite $\mathbb{Z}$-module, i.e., a finite abelian group, then by \cite[II, 2.6(ii)]{hungerford:1980:algebra} there exists a unique sequence of integers $(d_1,\dots,d_s)$ such that
\begin{equation*}
A \cong \mathbb{Z}/d_1\mathbb{Z} \oplus \cdots \oplus \mathbb{Z}/d_s\mathbb{Z},
\end{equation*}
$d_s > 1$, and $d_s \mid d_{s-1} \mid \cdots \mid d_1$. The integers occurring in the sequence $(d_1,\dots,d_s)$ are called the \emph{invariant factors} of $A$. We define the invariant factors of $A = \{0\}$ to be the empty sequence $()$. In particular, $\{0\}$ has $0$ invariant factors. If $X$ is a free $\mathbb{Z}$-module then by \cite[II, 2.6(i)]{hungerford:1980:algebra} there is a unique integer $r \geqslant 0$ such that $X \cong \mathbb{Z}^r$; we call $\rk(X) := r$ the \emph{rank} of $X$. The following is an obvious consequence of \cite[II, 1.6]{hungerford:1980:algebra}.
\end{pa}

\begin{lem}\label{lem:adapted-basis}
Assume $f : X \to A$ is a surjective $\mathbb{Z}$-module homomorphism where $X$ is free and $A$ is finite. There exists a basis $(x_1,\dots,x_n)$ of $X$ and integers $d_n \mid d_{n-1} \mid \cdots \mid d_1$ such that $(d_1x_1,\dots,d_nx_n)$ is a basis of $\Ker(f)$, $(d_1,\dots,d_s)$ are the invariant factors of $A$, and $d_i = 1$ for any $i > s \geqslant 0$. In particular, we have $\rk(X) \geqslant s$.
\end{lem}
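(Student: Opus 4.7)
The plan is to deduce everything from the Smith normal form statement cited from Hungerford (II, 1.6), which says that for any submodule $K$ of a free $\mathbb{Z}$-module $X$ of finite rank, one can find a basis $(y_1,\dots,y_n)$ of $X$, an integer $m\leqslant n$, and integers $c_1 \mid c_2 \mid \cdots \mid c_m$ such that $(c_1y_1,\dots,c_my_m)$ is a basis of $K$. I would apply this to $K = \Ker(f)$, then read off everything from the induced isomorphism $X/\Ker(f) \cong A$ and reindex to match the ordering convention fixed in \cref{pa:inv-factors}.

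First I would invoke Hungerford's result for $K = \Ker(f)$ to produce $(y_1,\dots,y_n)$ and $c_1 \mid \cdots \mid c_m$ as above. The quotient $X/\Ker(f)$ then decomposes as $\bigoplus_{i=1}^m \mathbb{Z}/c_i\mathbb{Z} \oplus \mathbb{Z}^{n-m}$, and since $f$ is surjective this must be isomorphic to $A$. As $A$ is finite, the free part must vanish, forcing $m=n$. This already yields the strict equality $\rk(X) = n$, and $(c_1y_1,\dots,c_ny_n)$ is a basis of $\Ker(f)$.

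Next I would reverse the indexing to fit the convention $d_s \mid d_{s-1} \mid \cdots \mid d_1$: set $x_i := y_{n-i+1}$ and $d_i := c_{n-i+1}$, so $(x_1,\dots,x_n)$ is again a basis of $X$, $d_n \mid \cdots \mid d_1$, and $(d_1 x_1,\dots,d_n x_n)$ is a basis of $\Ker(f)$. Letting $s$ be the largest index with $d_s > 1$ (and $s = 0$ if all $d_i = 1$, i.e.\ $A = \{0\}$), the remaining $d_i$ with $i > s$ are all $1$. The induced isomorphism
\begin{equation*}
A \;\cong\; X/\Ker(f) \;\cong\; \bigoplus_{i=1}^{n} \mathbb{Z}/d_i\mathbb{Z} \;\cong\; \bigoplus_{i=1}^{s} \mathbb{Z}/d_i\mathbb{Z}
\end{equation*}
identifies $(d_1,\dots,d_s)$ as a sequence of divisors with the divisibility and positivity properties characterising the invariant factors of $A$; uniqueness of the invariant factors (\cref{pa:inv-factors}) then forces this sequence to coincide with them.

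The final assertion $\rk(X) \geqslant s$ is immediate from $\rk(X) = n \geqslant s$. I do not anticipate any serious obstacle: the only subtlety is the bookkeeping between Hungerford's ordering convention (smallest divisor first, and the $m = \rk(K)$ may be strictly less than $\rk(X)$) and the convention of \cref{pa:inv-factors}, which is handled by the reversal step and by noting that finiteness of $A$ forces $m = n$.
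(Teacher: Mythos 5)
Your proof is correct and takes precisely the route the paper indicates: the paper gives no proof at all, merely remarking that the lemma is ``an obvious consequence of'' Hungerford [II, 1.6], and you have correctly filled in the routine details. You invoke Hungerford's theorem on submodules of free modules over a PID applied to $\Ker(f)$, note that finiteness of $A \cong X/\Ker(f)$ forces $\rk(\Ker(f)) = \rk(X)$ so there is no free part, reverse the indexing to match the convention of \cref{pa:inv-factors}, and appeal to uniqueness of invariant factors to identify $(d_1,\dots,d_s)$. The only cosmetic blemish is the phrase ``this already yields the strict equality $\rk(X) = n$'' --- since $n$ was defined as the rank of $X$, what you actually establish there is $\rk(\Ker(f)) = n$ --- but the argument is sound and complete.
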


\begin{definition}
If $f : X \to A$ is a surjective homomorphism with $X$ free and $A$ finite then we say a basis $(x_1,\dots,x_n)$ of $X$ is \emph{adapted} to $f$ if it satisfies the properties of \cref{lem:adapted-basis}.
\end{definition}

\section{Root Data}\label{sec:root-data}
\begin{assumption}
Throughout $\bG$ will denote a connected reductive $\mathbb{K}$-group. Moreover, we denote by $\mathbb{G}_a$, resp., $\mathbb{G}_m$, the set $\mathbb{K}$, resp., $\mathbb{K}\setminus\{0\}$, viewed as an algebraic group under addition, resp., multiplication.
\end{assumption}

\begin{pa}\label{pa:root-data-morphism}
Recall that a root datum is a quadruple $\mathcal{R} := (X,\Phi,\widecheck{X},\widecheck{\Phi})$ such that $X$ and $\widecheck{X}$ are free $\mathbb{Z}$-modules equipped with a perfect pairing $\langle -,-\rangle_{\mathcal{R}} : X \times \widecheck{X} \to \mathbb{Z}$, and $\Phi \subseteq X$ and $\widecheck{\Phi} \subseteq \widecheck{X}$ are finite subsets satisfying the conditions of \cite[7.4.1]{springer:2009:linear-algebraic-groups}; our root data are thus assumed to be reduced. In particular, we have a bijection $\Phi \to \widecheck{\Phi}$ which we denote by $\alpha \mapsto \widecheck{\alpha}$. If $\Phi$, or equivalently $\widecheck{\Phi}$, is empty then we say $\mathcal{R}$ is a \emph{torus}. If $\mathbb{Q}\Phi = \mathbb{Q}X$, or equivalently $\mathbb{Q}\widecheck{\Phi} = \mathbb{Q}\widecheck{X}$, then we say $\mathcal{R}$ is \emph{semisimple}. Note also that the quadruple $\widecheck{\mathcal{R}} := (\widecheck{X},\widecheck{\Phi},X,\Phi)$ is again a root datum with the canonical pairing which we call the \emph{dual root datum}. If $\bT \leqslant \bG$ is a maximal torus of our connected reductive algebraic group then we may construct the root datum $\mathcal{R}(\bG,\bT) = (X(\bT),\Phi,\widecheck{X}(\bT),\widecheck{\Phi})$ of $\bG$ with respect to $\bT$, or simply the root datum of $(\bG,\bT)$, which is defined as in \cite[7.4.3]{springer:2009:linear-algebraic-groups}. In particular, we have $X(\bT)$, resp., $\widecheck{X}(\bT)$, is the character, resp., cocharacter, group of $\bT$.
\end{pa}

\begin{pa}\label{pa:p-morphism}
If $\mathcal{R} = (X,\Phi,\widecheck{X},\widecheck{\Phi})$ and $\mathcal{R}' = (X',\Phi',\widecheck{X}',\widecheck{\Phi}')$ are root data then we say $(f,q,\tau) : \mathcal{R}' \to \mathcal{R}$ is a \emph{$p$-morphism} if $f : X' \to X$ is a $\mathbb{Z}$-module homomorphism, $q : \Phi \to \{\max(p^n,1) \mid n\geqslant 0$ an integer$\}$ is a function, and $\tau : \Phi \to \Phi'$ is a bijection satisfying the condition of \cite[9.6.3, Eq.\ (44)]{springer:2009:linear-algebraic-groups}. A $p$-morphism is a $p$-isogeny if $f$ and its dual, or transpose, $\widecheck{f} : \widecheck{X} \to \widecheck{X}'$ with respect to $\langle-,-\rangle_{\mathcal{R}}$ are injective. If $\mathcal{R} = \mathcal{R}'$ and $p>0$ then we have the notion of a $p$-Steinberg, resp., $p$-Frobenius, endomorphism which are those $p$-morphisms satisfying the condition of \cite[1.4.17(ii)]{geck-malle:2016:reductive-groups-and-steinberg-maps}, resp.,  \cite[1.4.27(ii)]{geck-malle:2016:reductive-groups-and-steinberg-maps}.
\end{pa}

\begin{rem}\label{pa:dual-p-morphism}
We note that there is a natural composition of $p$-morphisms. Moreover, given a $p$-morphism $(f,q,\tau)$ we have $\widecheck{f} : \widecheck{X} \to \widecheck{X}'$ is naturally a $p$-morphism and the assignment $f \mapsto \widecheck{f}$ is bijective and contravariant on $p$-morphisms. This bijection restricts to a bijection between $p$-isogenies and if $\mathcal{R}' = \mathcal{R}$ then it restricts to a bijection between $p$-Steinberg, resp., $p$-Frobenius, endomorphisms.
\end{rem}

\subsection{Reductive groups}
\begin{pa}\label{pa:isogeny}
Recall that a homomorphism $\phi : \bG \to \bG'$ between connected reductive $\mathbb{K}$-groups is called an \emph{isotypy} if $\Ker(\phi)$ is contained in the centre $Z(\bG)$ of $\bG$ and $\Image(\phi)$ contains the derived subgroup $\bG_{\der}'$ of $\bG'$. Moreover, we say $\phi$ is an \emph{isogeny} if it is surjective and has finite kernel. In particular, any isotypy restricts to an isogeny $\phi : \bG_{\der} \to \bG_{\der}'$ between derived subgroups. In what follows it will be convenient for us to consider pairs $(\bG,\bT)$ consisting of a connected reductive $\mathbb{K}$-group $\bG$ and a maximal torus $\bT \leqslant \bG$. We will define an isotypy $\phi : (\bG,\bT) \to (\bG',\bT')$ between such pairs to be an isotypy $\phi : \bG \to \bG'$ such that $\phi(\bT) \leqslant \bT'$.
\end{pa}

\begin{pa}\label{pa:functor-def}
By \cite[8.1.1]{springer:2009:linear-algebraic-groups} there exists, for any root $\alpha \in \Phi$, an isomorphism $x_{\alpha} : \mathbb{G}_a \to \bX_{\alpha}$ onto a unique closed subgroup $\bX_{\alpha} \leqslant \bG$ such that $tx_{\alpha}(k)t^{-1} = x_{\alpha}(\alpha(t)k)$ for any $k \in \mathbb{G}_a$ and $t \in \bT$. A family $(x_{\alpha})_{\alpha \in \Phi}$ of such isomorphisms will be called a \emph{realisation} of $(\bG,\bT)$; note that this is weaker then the corresponding notion defined in \cite[8.1.5]{springer:2009:linear-algebraic-groups}.
\end{pa}

\begin{pa}\label{pa:isog-to-isog-rd}
Now assume $\phi : (\bG,\bT) \to (\bG',\bT')$ is an isotypy and $(x_{\alpha})_{\alpha \in \Phi}$, resp., $(x_{\alpha}')_{\alpha \in \Phi}$, is a realisation of $(\bG,\bT)$, resp., $(\bG',\bT')$. We then have an induced $\mathbb{Z}$-module homomorphism $\phi^* : X(\bT') \to X(\bT)$ defined by $\phi^*(x) = x\circ\phi$. Arguing as in \cite[2.5]{steinberg:1999:the-isomorphism-and-isogeny-theorems} we see that there exists a $p$-morphism $\mathcal{R}(\phi) := (\phi^*,q,\tau) : \mathcal{R}(\bG',\bT') \to \mathcal{R}(\bG,\bT)$ and constants $c_{\alpha} \in \mathbb{G}_m$ such that
\begin{equation}\label{eq:action-of-phi}
\phi(x_{\alpha}(k)) = x_{\tau(\alpha)}'(c_{\alpha}k^{q(\alpha)}),
\end{equation}
see also \cite[Example A.4.4]{conrad-gabber-prasad:2010:pseudo-reductive-groups}. With this we have the following classical result which is a culimination of a generalisation of the isogeny theorem together with the existence theorem, see \cite[\S5]{steinberg:1999:the-isomorphism-and-isogeny-theorems}, \cite[II, 1.14]{jantzen:2003:representations-of-algebraic-groups}, and \cite[10.1.1]{springer:2009:linear-algebraic-groups}.
\end{pa}

\begin{thm}\label{thm:full-functor}
For any root datum $\mathcal{R}$ there exists a connected reductive $\mathbb{K}$-group $\bG$ and a maximal torus $\bT \leqslant \bG$ such that $\mathcal{R}(\bG,\bT) = \mathcal{R}$. Moreover, for any $p$-morphism $(f,q,\tau) : \mathcal{R}(\bG',\bT') \to \mathcal{R}(\bG,\bT)$ there exists an isotypy $\phi : (\bG,\bT) \to (\bG',\bT')$ such that $\mathcal{R}(\phi) = f$. If $\phi' : (\bG,\bT) \to (\bG',\bT')$ is another isotypy satisfying $\mathcal{R}(\phi') = (f,q,\tau)$ then there exists an element $t \in \bT$ such that $\phi' = \phi\circ\Inn t$, where $\Inn g : \bG \to \bG$ denotes the automorphism defined by $\Inn g(x) = gxg^{-1}$ for any $g,x \in \bG$.
\end{thm}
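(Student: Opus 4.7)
The plan is to obtain each of the three assertions by reducing to a classical result from the cited literature. The first---existence of $(\bG, \bT)$ realising a prescribed $\mathcal{R}$---is Chevalley's existence theorem, for which I would simply invoke \cite[II, 1.14]{jantzen:2003:representations-of-algebraic-groups} or \cite[10.1.1]{springer:2009:linear-algebraic-groups}; no further argument is needed.

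For the lifting of a general $p$-morphism $(f, q, \tau) : \mathcal{R}(\bG',\bT') \to \mathcal{R}(\bG,\bT)$ to an isotypy, my plan is to reduce to Steinberg's isogeny theorem \cite[\S5]{steinberg:1999:the-isomorphism-and-isogeny-theorems}, which handles the special case of $p$-isogenies. The reduction proceeds by a factorisation argument: one writes the given $p$-morphism as the composition of a $p$-isogeny $\mathcal{R}'' \to \mathcal{R}$ with a morphism $\mathcal{R}' \to \mathcal{R}''$ that corresponds, via the existence theorem applied to an intermediate pair $(\bG'', \bT'')$ whose root datum is built from $f(X') \subseteq X$ and $\tau(\Phi)$, to a derived embedding $\bG'' \hookrightarrow \bG'$. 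The isogeny theorem then supplies the central isogeny $\bG \to \bG''$; the embedding $\bG'' \hookrightarrow \bG'$ is constructed directly from the realisations $(x_\alpha)$ and $(x_\alpha')$; and their composition yields the required isotypy $\phi$. A straightforward check on pullbacks of characters confirms $\mathcal{R}(\phi) = (f,q,\tau)$.

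For the uniqueness, suppose $\phi, \phi' : (\bG,\bT) \to (\bG',\bT')$ both lift $(f,q,\tau)$. The restrictions to $\bT$ induce the same character map $f : X(\bT') \to X(\bT)$, and since a homomorphism between tori is determined by its effect on character groups, we have $\phi|_{\bT} = \phi'|_{\bT}$. Hence the entire discrepancy between $\phi$ and $\phi'$ is encoded in the constants $c_\alpha, c_\alpha' \in \mathbb{G}_m$ of \cref{eq:action-of-phi}. To produce $t \in \bT$ with $\phi' = \phi \circ \Inn t$, I would seek $t$ satisfying $\alpha(t)^{q(\alpha)} = c_\alpha'/c_\alpha$ for every $\alpha \in \Phi$. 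Because $\mathbb{G}_m$ is divisible, the existence of such $t$ reduces to extending a suitable function $\Phi \to \mathbb{G}_m$ to a character of $\bT$. The main obstacle is verifying that this function is consistent with the integral relations among the roots; this consistency is forced by the Chevalley commutation relations applied to both $\phi$ and $\phi'$, and the argument is worked out in detail in Steinberg's treatment.
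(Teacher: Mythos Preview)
The paper does not actually prove this statement: it records it as a classical result, citing Steinberg's isogeny theorem \cite[\S5]{steinberg:1999:the-isomorphism-and-isogeny-theorems}, the existence theorem \cite[II, 1.14]{jantzen:2003:representations-of-algebraic-groups}, and \cite[10.1.1]{springer:2009:linear-algebraic-groups}, and leaves it at that. Your proposal therefore goes further than the paper by sketching how these ingredients fit together. The existence and uniqueness portions of your sketch are fine and in line with the standard treatment. However, there is a genuine gap in your factorisation for the lifting step.

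You claim that with $X'' = f(X') \subseteq X$ the inclusion $X'' \hookrightarrow X$ is a $p$-isogeny, so that Steinberg's theorem produces the map $\bG \to \bG''$. But a $p$-isogeny requires both $f$ and its transpose $\widecheck{f}$ to be injective, which forces $X/X''$ to be finite. This can fail whenever $\bG$ has a positive-dimensional centre. For example, the projection $\GL_2(\mathbb{K}) \times \mathbb{G}_m \to \GL_2(\mathbb{K})$ is an isotypy whose associated $p$-morphism is the inclusion $\mathbb{Z}^2 \hookrightarrow \mathbb{Z}^3$ onto the first two coordinates, and here $X/X'' \cong \mathbb{Z}$ is infinite. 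In such cases the map $\bG \to \bG''$ you need has a torus in its kernel and is not an isogeny, so Steinberg's theorem does not apply directly.

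The repair is to insert one more step: replace $X$ by the saturation $(X'')^{\top} \subseteq X$, so that the factorisation becomes $X' \twoheadrightarrow X'' \hookrightarrow (X'')^{\top} \hookrightarrow X$. The last inclusion has free cokernel and corresponds to the quotient of $\bG$ by a central torus, which one constructs by hand; the middle inclusion now has finite cokernel and is a genuine $p$-isogeny, to which Steinberg's theorem applies; the first map is surjective and corresponds to the derived embedding $\bG'' \hookrightarrow \bG'$ as you describe. With this refinement your strategy goes through.
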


\begin{pa}
We will need the following characterisations of Steinberg and Frobenius endomorphisms, which are often used in the literature. For proofs of these statements we refer the reader to \cite[1.4.16, 1.4.17]{geck-malle:2016:reductive-groups-and-steinberg-maps}; see also \cite[3.17]{digne-michel:1991:representations-of-finite-groups-of-lie-type}.
\end{pa}

\begin{prop}\label{prop:characterisation-of-Steinberg-Frobenius}
Assume $\mathbb{K} = \overline{\mathbb{F}_p}$ then an isogeny $F : (\bG,\bT) \to (\bG,\bT)$ is a Steinberg/Frobenius endomorphism if and only if $\mathcal{R}(F) : \mathcal{R}(\bG,\bT) \to \mathcal{R}(\bG,\bT)$ is a $p$-Steinberg/$p$-Frobenius endomorphism.
\end{prop}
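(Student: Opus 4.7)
The proof naturally splits into two parts. First I would reduce the Steinberg case to the Frobenius case by functoriality. The assignment $F \mapsto \mathcal{R}(F)$ respects composition, so $\mathcal{R}(F^n) = \mathcal{R}(F)^n$ for all $n \geqslant 1$. Since, by definition, $F$ is Steinberg exactly when some power $F^n$ is Frobenius, and $\mathcal{R}(F)$ is $p$-Steinberg exactly when some power $\mathcal{R}(F)^n$ is $p$-Frobenius, the Steinberg statement follows immediately once the Frobenius one is known. Therefore, the whole content lies in proving the equivalence of Frobenius endomorphisms of $(\bG,\bT)$ with $p$-Frobenius endomorphisms of $\mathcal{R}(\bG,\bT)$.

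For the forward direction, assume $F$ is a Frobenius endomorphism corresponding to an $\mathbb{F}_q$-rational structure on $\bG$ with respect to which $\bT$ is defined over $\mathbb{F}_q$. Fix a realisation $(x_\alpha)_{\alpha \in \Phi}$ of $(\bG,\bT)$. After possibly adjusting the realisation we may arrange that $F$ acts on $\bT$ by the $q$-th power map, and from the defining relation $tx_\alpha(k)t^{-1} = x_\alpha(\alpha(t)k)$ one sees that $F$ must act on each $\bX_\alpha$ by $x_\alpha(k) \mapsto x_{\tau(\alpha)}(c_\alpha k^q)$ for some permutation $\tau$ of $\Phi$ and constants $c_\alpha \in \mathbb{G}_m$. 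Comparing with \eqref{eq:action-of-phi} forces $q(\alpha) = q$ for all $\alpha \in \Phi$, which is the defining property of a $p$-Frobenius morphism of $\mathcal{R}(\bG,\bT)$.

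For the reverse direction, assume $\mathcal{R}(F) = (f,q,\tau)$ is a $p$-Frobenius morphism with constant value $q$. By \cref{thm:full-functor} we may construct, out of the same root datum, a pair $(\bG_0,\bT_0) \cong (\bG,\bT)$ equipped with a split $\mathbb{F}_q$-rational structure whose geometric Frobenius $F_0$ realises the $p$-morphism $(f,q,\tau)$ (up to the symmetry $\tau$). Transporting $F_0$ back to $(\bG,\bT)$ gives an isogeny $F_1 : (\bG,\bT) \to (\bG,\bT)$ that is a Frobenius endomorphism and satisfies $\mathcal{R}(F_1) = \mathcal{R}(F)$. The uniqueness clause of \cref{thm:full-functor} then yields $F = F_1 \circ \mathrm{Inn}(t)$ for some $t \in \bT$. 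The key point, and the step I expect to be the main obstacle, is to verify that the twist $F_1 \circ \mathrm{Inn}(t)$ is itself a Frobenius endomorphism. This is a standard consequence of the Lang--Steinberg theorem applied to $\bT$: one produces $s \in \bT$ with $s^{-1}F_1(s) = t$ and checks that $F = \mathrm{Inn}(s) \circ F_1 \circ \mathrm{Inn}(s)^{-1}$ is conjugate to $F_1$, hence is again a geometric Frobenius attached to an $\mathbb{F}_q$-form of $\bG$. All of these technical verifications are precisely those carried out in \cite[1.4.16, 1.4.17]{geck-malle:2016:reductive-groups-and-steinberg-maps}.
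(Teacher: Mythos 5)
The paper does not give its own proof of this proposition: it refers the reader to Geck--Malle, 1.4.16, 1.4.17, and Digne--Michel, 3.17, and your argument is in effect a reconstruction of what those references contain (particularly the argument in Digne--Michel). So the comparison here is with the cited sources rather than with anything proved inside the paper. Your reduction of the Steinberg case to the Frobenius case via $\mathcal{R}(F^n) = \mathcal{R}(F)^n$ is correct, since a $p$-Steinberg endomorphism is by definition one of which some power is multiplication by an integral $p$-power, which is the same as some power being $p$-Frobenius.

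Both directions of the main argument are stated a little loosely. In the forward direction, one cannot, by adjusting the realisation, arrange that $F$ act on $\bT$ by the $q$-th power map; that holds only if $\bT$ is \emph{split} over $\mathbb{F}_q$. What is true is that $F^*$ on $X(\bT)$ equals $q\phi$ for a finite-order automorphism $\phi$ (a torus defined over $\mathbb{F}_q$ splits over a finite extension, so the Galois twist has finite order), and it is this $\phi$ that induces the permutation $\tau$ of $\Phi$. As written your sentence is internally inconsistent: a pure $q$-power action on $\bT$ would force $\tau$ to fix $\Phi$ pointwise, yet you (correctly) allow a nontrivial $\tau$ in the next clause. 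Dually, in the reverse direction, a nontrivial $\tau$ forces you to produce a \emph{twisted} $\mathbb{F}_q$-form rather than a split one. The real content is the existence of a Frobenius $F_1$ with $\mathcal{R}(F_1) = (f,q,\tau)$: one builds it by precomposing the standard split Frobenius with a finite-order algebraic automorphism realising $\phi = q^{-1}f$ and verifying the result is still a geometric Frobenius; \cref{thm:full-functor} alone only hands you an isotypy, not a Frobenius. The Lang--Steinberg step absorbing the inner twist $\Inn(t)$ is then as you describe. With these repairs your argument is the standard one, precisely what the cited references carry out.
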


\begin{pa}
In the following sections we will need to remove the ambiguity over isotypies in \cref{thm:full-functor}. To do this we will need to add more information to the pair $(\bG,\bT)$. For this, let us choose a Borel subgroup $\bB \leqslant \bG$ containing $\bT$. The choice of Borel subgroup $\bB$ determines a natural set of simple roots $\Delta \subseteq \Phi \subseteq X(\bT)$. If $(x_{\alpha})_{\alpha \in \Delta}$ denotes a family of isomorphisms, as in \cref{pa:functor-def}, then we call the quadruple $(\bG,\bB,\bT,(x_{\alpha})_{\alpha \in \Delta})$ a \emph{pinned connected reductive group} or a \emph{pinning} of $(\bG,\bT)$. We will usually write $(x_{\alpha})$ instead of $(x_{\alpha})_{\alpha \in \Delta}$ for simplicity. An isotypy
\begin{equation*}
\phi : (\bG,\bB,\bT,(x_{\alpha})) \to (\bG',\bB',\bT',(x_{\alpha}'))
\end{equation*}
of pinned groups is then defined to be an isotypy $\phi : \bG \to \bG'$ such that $\phi(\bB) \leqslant \bB'$, $\phi(\bT) \leqslant \bT'$, and $c_{\alpha}=1$ for all $\alpha \in \Delta$, where $c_{\alpha}$ is as in \cref{eq:action-of-phi}.
\end{pa}

\begin{pa}
On the side of root data we have the corresponding notion of a \emph{based root datum}. This is a sextuple $(X,\Phi,\Delta,\widecheck{X},\widecheck{\Phi},\widecheck{\Delta})$ where $\mathcal{R} = (X,\Phi,\widecheck{X},\widecheck{\Phi})$ is a root datum, $\Delta \subseteq \Phi$ is a set of simple roots, and $\widecheck{\Delta} \subseteq \widecheck{\Phi}$ is the image of $\Delta$ under the bijection $\widecheck{\phantom{x}} : \Phi \to \widecheck{\Phi}$. Note that $\widecheck{\Delta}$ is then also a set of simple coroots. We shall also denote the based root datum by $(\mathcal{R},\Delta,\widecheck{\Delta})$. A \emph{$p$-morphism} between based root data is then defined to be a $p$-morphism $(f,q,\tau) : \mathcal{R}' \to \mathcal{R}$ such that $\tau(\Delta) = \Delta'$. To any quadruple $(\bG,\bB,\bT,(x_{\alpha}))$ we have a corresponding based root datum $\widehat{\mathcal{R}}(\bG,\bB,\bT,(x_{\alpha})) = (\mathcal{R}(\bG,\bT),\Delta,\widecheck{\Delta})$. Moreover, for any morphism $\phi : (\bG,\bB,\bT,(x_{\alpha})) \to (\bG',\bB',\bT',(x_{\alpha}'))$ we obtain a corresponding $p$-morphism of based root data $\widehat{\mathcal{R}}(\phi) : \widehat{\mathcal{R}}(\bG',\bB',\bT',(x_{\alpha}')) \to \widehat{\mathcal{R}}(\bG,\bB,\bT,(x_{\alpha}))$. The following is a strengthened form of \cref{thm:full-functor}. Its proof is an easy consequence of \cref{thm:full-functor} and \cite[16.2, C]{humphreys:1975:linear-algebraic-groups} which we leave to the reader.
\end{pa}

\begin{prop}\label{prop:unique-morphism}
For any $p$-morphism $(f,q,\tau) : \widehat{\mathcal{R}}(\bG',\bB',\bT',(x_{\alpha}')) \to \widehat{\mathcal{R}}(\bG,\bB,\bT,(x_{\alpha}))$ there exists a unique isotypy $\phi : (\bG,\bB,\bT,(x_{\alpha})) \to (\bG',\bB',\bT',(x_{\alpha}'))$ such that $\widehat{\mathcal{R}}(\phi) = (f,q,\tau)$.
\end{prop}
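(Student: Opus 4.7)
The plan is to apply \cref{thm:full-functor} to obtain an isotypy $\phi_0 : (\bG,\bT) \to (\bG',\bT')$ with $\mathcal{R}(\phi_0) = (f,q,\tau)$ and then to adjust it by an inner automorphism $\Inn t$ with $t \in \bT$ until the pinning conditions hold, namely $\phi(\bB) \leqslant \bB'$ and $c_\alpha = 1$ for every $\alpha \in \Delta$. Since conjugation by $t \in \bT$ does not alter the underlying $p$-morphism, any adjustment we perform will preserve $\mathcal{R}(\phi_0) = (f,q,\tau)$.

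The inclusion $\phi_0(\bB) \leqslant \bB'$ comes for free from $\tau(\Delta) = \Delta'$. Indeed, from $f(\tau(\alpha)) = q(\alpha)\alpha$ one deduces $\tau(\Phi^+) = \Phi'^+$: for $\alpha \in \Phi^+$ written as $\sum n_\delta \delta$, the image $f(\tau(\alpha)) = q(\alpha)\sum n_\delta \delta$ is a nonzero non-negative combination of $\Delta$, while if $\tau(\alpha)$ were negative then expressing it in $\Delta' = \tau(\Delta)$ and again applying $f$ would force a non-positive combination of $\Delta$, a contradiction. Since $\bB$ is generated by $\bT$ together with the positive root subgroups and $\phi_0(\bX_\alpha) \subseteq \bX'_{\tau(\alpha)}$, the inclusion $\phi_0(\bB) \leqslant \bB'$ follows, and this is preserved by any subsequent composition with $\Inn t$ for $t \in \bT \leqslant \bB$. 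For the second condition, \cref{eq:action-of-phi} produces constants $c_\alpha \in \mathbb{G}_m$ for $\phi_0$, and replacing $\phi_0$ by $\phi_0 \circ \Inn t$ replaces $c_\alpha$ with $c_\alpha\alpha(t)^{q(\alpha)}$ (a direct computation using the commutation relation defining $\bX_\alpha$). Thus the existence of $\phi$ reduces to solving the system $\alpha(t)^{q(\alpha)} = c_\alpha^{-1}$ over $\alpha \in \Delta$ for some $t \in \bT$; by \cite[16.2, C]{humphreys:1975:linear-algebraic-groups}, since $\Delta$ is $\mathbb{Z}$-linearly independent in $X(\bT)$, the torus homomorphism $\bT \to \mathbb{G}_m^\Delta$ given by $t \mapsto (\alpha(t))_{\alpha \in \Delta}$ is surjective, and as $\mathbb{K}$ is algebraically closed we can extract $q(\alpha)$-th roots freely, so a suitable $t$ exists.

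For uniqueness, assume two isotypies $\phi,\phi'$ satisfy the conclusion. The uniqueness clause of \cref{thm:full-functor} yields $\phi' = \phi \circ \Inn t$ for some $t \in \bT$, and the calculation above combined with $c_\alpha = c_\alpha' = 1$ forces $\alpha(t)^{q(\alpha)} = 1$ for every $\alpha \in \Delta$. If $p = 0$ then $q(\alpha) = 1$ gives $\alpha(t) = 1$ directly; if $p > 0$ then $q(\alpha)$ is a power of $p$ and the Frobenius $x \mapsto x^p$ being injective on $\overline{\mathbb{F}_p}^{\times}$ again yields $\alpha(t) = 1$. Because $\mathbb{Z}\Delta = \mathbb{Z}\Phi$, we conclude $t \in \bigcap_{\alpha \in \Phi}\ker(\alpha) = Z(\bG)$, so $\Inn t$ is trivial and $\phi' = \phi$. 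The only substantive step is the torus-theoretic existence of the adjusting element $t$; the rest is a short combinatorial check on root systems and a routine calculation in $\bT$.
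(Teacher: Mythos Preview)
Your proof is correct and follows exactly the approach the paper indicates: the paper itself does not give a proof but simply states that it is ``an easy consequence of \cref{thm:full-functor} and \cite[16.2, C]{humphreys:1975:linear-algebraic-groups} which we leave to the reader.'' You have supplied precisely those details, using \cref{thm:full-functor} for existence up to $\Inn t$ and then \cite[16.2, C]{humphreys:1975:linear-algebraic-groups} to solve for the adjusting torus element.
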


\begin{rem}\label{rem:compat-real}
Assume $F : \bG \to \bG$ is a Steinberg endomorphism then there exists an $F$-stable maximal torus and Borel subgroup $\bT\leqslant \bB \leqslant \bG$. There then exists a pinning $(\bG,\bB,\bT,(x_{\alpha}))$ of $(\bG,\bT)$ such that $F : (\bG,\bB,\bT,(x_{\alpha})) \to (\bG,\bB,\bT,(x_{\alpha}))$ is an isotypy of pinned groups. We will say such a pinning is compatible with $F$.
\end{rem}

\subsection{Constructing new root data from old}
\begin{pa}\label{pa:root-data-constructions}
We need to recall some constructions which allow us to create new root data from existing root data; we assume $\mathcal{R}$ and $\mathcal{R}'$ are root data as in \cref{pa:p-morphism}. One can easily construct a torus from the root datum $\mathcal{R}$ by setting $\mathcal{R}^{\circ} = (X,\emptyset,\widecheck{X},\emptyset)$. We may also form the \emph{direct sum} $\mathcal{R}\oplus\mathcal{R}'$ which is defined to be the quadruple $(X\oplus X',\Phi\cup\Phi',\widecheck{X}\oplus\widecheck{X}',\widecheck{\Phi}\cup\widecheck{\Phi}')$, where we identify $\Phi$, resp., $\Phi'$, with its image under the canonical inclusion map $X \to X\oplus X'$, resp., $X' \to X\oplus X'$; similar identifications are made for the coroots. Note that for any $(x,x') \in X\oplus X'$ and $(y,y') \in \widecheck{X}\oplus\widecheck{X}'$ we have
\begin{equation*}
\langle (x,x'), (y,y') \rangle_{\mathcal{R}\oplus\mathcal{R}'} = \langle x,y\rangle_{\mathcal{R}} + \langle x', y' \rangle_{\mathcal{R}'}.
\end{equation*}
\end{pa}

\begin{pa}\label{pa:induced-root-datum}
Now let $A \subseteq X$ be a submodule such that $\Phi \subseteq A$ and denote by $\widecheck{A} = \Hom(A,\mathbb{Z})$ the dual of $A$. We will denote by $\iota_A : A \to X$ the natural inclusion and by $\widecheck{\iota}_A : \widecheck{X} \to \widecheck{A}$ the map defined by $\widecheck{\iota}_A(y) = \langle -,y\rangle_{\mathcal{R}}\circ \iota_A$. The following provides a way to construct a root datum from $A$ with the same underlying roots.
\end{pa}

\begin{lem}[{}{\cite[\S6.5]{sga:2011:sga3-TomeIII}}]\label{lem:induced-root-datum}
The quadruple $\mathcal{R}_A = (A,\Phi,\widecheck{A},\widecheck{\iota}_A(\widecheck{\Phi}))$ is a root datum, with respect to the canonical perfect pairing $\langle -,-\rangle_{\mathcal{R}_A} : A \times \widecheck{A} \to \mathbb{Z}$ defined by $\langle a,b\rangle_{\mathcal{R}_A} = b(a)$, called the root datum \emph{induced by $A$}. Moreover $\iota_A : A \to X$ is a homomorphism of root data.
\end{lem}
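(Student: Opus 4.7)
The plan is to verify each axiom of a root datum for the quadruple $\mathcal{R}_A = (A,\Phi,\widecheck{A},\widecheck{\iota}_A(\widecheck{\Phi}))$ and then to confirm that $\iota_A$ is a morphism of root data. First I would observe that $A$, being a submodule of the finitely generated free $\mathbb{Z}$-module $X$, is itself finite rank free; hence $\widecheck{A} = \Hom(A,\mathbb{Z})$ is free of the same rank and the canonical evaluation pairing $\langle -,-\rangle_{\mathcal{R}_A}$ is perfect.

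The next step, and the main obstacle, is to show that the assignment $\alpha \mapsto \widecheck{\iota}_A(\widecheck{\alpha})$ defines a bijection $\Phi \to \widecheck{\iota}_A(\widecheck{\Phi})$. Surjectivity is immediate from the definition, so the key issue is injectivity. For this I would invoke the classical fact that the restriction of $\langle -,-\rangle_{\mathcal{R}}$ to $\mathbb{Q}\Phi \times \mathbb{Q}\widecheck{\Phi}$ is non-degenerate. If $\widecheck{\iota}_A(\widecheck{\alpha}) = \widecheck{\iota}_A(\widecheck{\beta})$ then $\widecheck{\alpha} - \widecheck{\beta}$ pairs to zero with every element of $A$, and in particular with every element of $\Phi \subseteq A$. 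Since $\widecheck{\alpha},\widecheck{\beta}$ lie in $\mathbb{Q}\widecheck{\Phi}$, non-degeneracy forces $\widecheck{\alpha} = \widecheck{\beta}$. This is the one step where the hypothesis $\Phi \subseteq A$ is essential, and it is the single non-cosmetic ingredient of the whole proof.

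Having established the bijection, I would verify the remaining axioms of a root datum. The normalisation $\langle \alpha, \widecheck{\iota}_A(\widecheck{\alpha})\rangle_{\mathcal{R}_A} = \widecheck{\iota}_A(\widecheck{\alpha})(\alpha) = \langle \alpha,\widecheck{\alpha}\rangle_{\mathcal{R}} = 2$ is immediate. For stability under reflections, one checks that the reflection on $A$ induced by the pair $(\alpha,\widecheck{\iota}_A(\widecheck{\alpha}))$ is simply the restriction to $A$ of the reflection $s_\alpha$ of $\mathcal{R}$, which preserves $\Phi$ by hypothesis. Dually, a short calculation using $\widecheck{\iota}_A(\widecheck{\beta})(\alpha) = \langle \alpha,\widecheck{\beta}\rangle_{\mathcal{R}}$ shows $s_{\widecheck{\iota}_A(\widecheck{\alpha})} \circ \widecheck{\iota}_A = \widecheck{\iota}_A \circ s_{\widecheck{\alpha}}$ on $\widecheck{X}$, so the image $\widecheck{\iota}_A(\widecheck{\Phi})$ is stable under its own reflections. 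Reducedness for $\mathcal{R}_A$ is inherited from $\mathcal{R}$ since the root set $\Phi$ is literally the same.

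Finally, to see that $\iota_A$ is a morphism of root data, note that its transpose with respect to the two perfect pairings is $\widecheck{\iota}_A$ by construction of the latter, that $\iota_A$ sends $\Phi \subseteq A$ identically onto $\Phi \subseteq X$, and that under this identification $\widecheck{\iota}_A$ matches $\widecheck{\Phi}$ with the coroots of $\mathcal{R}_A$. Compatibility with the root datum structure is then just unwinding the definitions. In summary, modulo the non-degeneracy argument of the second paragraph, the lemma is a mechanical check.
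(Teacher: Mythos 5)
Your proof is correct. Note, however, that the paper does not actually prove this lemma: the bracketed citation in the statement is a pointer to \cite[\S6.5]{sga:2011:sga3-TomeIII}, and no in-paper argument is given, so there is no authorial proof to compare against. Your verification is the expected one: the genuinely non-mechanical point is injectivity of $\widecheck{\iota}_A$ on $\widecheck{\Phi}$, and your argument---that $\widecheck{\alpha}-\widecheck{\beta}$ would pair to zero against $\Phi \subseteq A$, hence against $\mathbb{Q}\Phi$, so that non-degeneracy of $\langle-,-\rangle_{\mathcal{R}}$ on $\mathbb{Q}\Phi \times \mathbb{Q}\widecheck{\Phi}$ forces $\widecheck{\alpha}=\widecheck{\beta}$---is exactly where the hypothesis $\Phi \subseteq A$ enters. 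The remaining steps (freeness of $A$ hence perfectness of the evaluation pairing, the identity $\langle\alpha,\widecheck{\iota}_A(\widecheck{\alpha})\rangle_{\mathcal{R}_A}=2$, the observation that the reflection attached to $(\alpha,\widecheck{\iota}_A(\widecheck{\alpha}))$ is the restriction of $s_\alpha$ and that dually $s_{\widecheck{\iota}_A(\widecheck{\alpha})}\circ\widecheck{\iota}_A = \widecheck{\iota}_A\circ s_{\widecheck{\alpha}}$, reducedness inherited because the root set is literally $\Phi$, and the fact that the transpose of $\iota_A$ is $\widecheck{\iota}_A$ by construction) are routine and carried out accurately.
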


\begin{rem}
Let $B \subseteq \widecheck{X}$ be a submodule containing $\widecheck{\Phi}$ and denote by $\widecheck{B}$ the dual module $\Hom(B,\mathbb{Z})$. If $\iota_B : B \to \widecheck{X}$ is the natural inclusion map and $\widecheck{\iota}_B : X \to \widecheck{B}$ is the map defined by $\widecheck{\iota}_B(x) = \langle x,-\rangle_{\mathcal{R}}\circ\iota_B$ then the quadruple $\mathcal{R}^B = (\widecheck{B},\widecheck{\iota}_B(\Phi),B,\widecheck{\Phi})$ is a root datum called the root datum \emph{co-induced by $B$}. Note that $\mathcal{R}^B$ is just the dual of the root datum $\widecheck{\mathcal{R}}_B$ induced by $B$.
\end{rem}

\begin{pa}\label{pa:radical}
Now assume $A \subseteq X$ is any \emph{subset} then we define submodules
\begin{align*}
A^{\top} &= \{x \in X \mid nx \in \mathbb{Z}A\text{ for some integer }n>0\} \subseteq X,\\
A^{\perp} &= \{y \in \widecheck{X} \mid \langle x,y\rangle_{\mathcal{R}} = 0\text{ for all }x \in A\} \subseteq \widecheck{X}.
\end{align*}
Note that $A^{\top}/\mathbb{Z}A = \Tor(X/\mathbb{Z}A)$. If $B \subseteq \widecheck{X}$ is a subset then the submodules $B^{\top} \subseteq \widecheck{X}$ and $B^{\perp} \subseteq X$ are defined in exactly the same way. Now assume $\Phi \subseteq A \subseteq X$ is a submodule as in \cref{pa:induced-root-datum} so that $A = A^{\top}$, in particular the quotient $X/A$ is a free module. In this case $\widecheck{\iota}_A$ is surjective and we have $\mathcal{R}_A$ is isomorphic to the root datum $(A,\Phi,\widecheck{X}/A^{\perp},\widecheck{\Phi})$ with the pairing being that induced by $\langle -,-\rangle_{\mathcal{R}}$. Here we implicitly identify $\widecheck{\Phi}$ with its image under the natural map $\widecheck{X} \to \widecheck{X}/A^{\perp}$. Following \cite[8.1.8, 8.1.9]{springer:2009:linear-algebraic-groups} we define the \emph{radical} of $\mathcal{R}$ to be the torus $\mathcal{R}_{\rad} = (\mathcal{R}^{\widecheck{\Phi}^{\perp}})^{\circ} = (X/\Phi^{\top},\emptyset,\widecheck{\Phi}^{\perp},\emptyset)$; note that $(\widecheck{\Phi}^{\top})^{\perp} = \Phi^{\top}$. Moreover, we define the \emph{derived datum} to be the semisimple datum $\mathcal{R}_{\mathrm{der}} = \mathcal{R}^{\widecheck{\Phi}^{\top}} = (X/\widecheck{\Phi}^{\perp},\Phi,\widecheck{\Phi}^{\top},\widecheck{\Phi})$.
\end{pa}

\section{Central Products of Root Data}\label{sec:central-products}
\begin{pa}\label{pa:fibre-prod}
In this section we introduce a construction of root data which is inherited from the fibre product of $\mathbb{Z}$-modules. For this let $\mathcal{R}_i = (X_i,\Phi_i,\widecheck{X}_i,\widecheck{\Phi}_i)$ be a root datum with $i \in \{1,2\}$ and let $A$ be a $\mathbb{Z}$-module equipped with two \emph{surjective} homomorphisms $h_i : X_i \to A$ such that $\Phi_i \subseteq \Ker(h_i)$. With respect to the triple $(A,h_1,h_2)$ we may construct the fibre product $X_1 \oplus_{(A,h_1,h_2)} X_2$ in the category of $\mathbb{Z}$-modules. In other words, we have a commutative diagram
\begin{equation}\label{eq:fibre-prod}
\begin{tikzpicture}[baseline=(current  bounding  box.center)]
\matrix (m) [matrix of math nodes, row sep=3em, column sep=3em, text height=1.5ex, text depth=0.25ex]
{ X_1 \oplus_{(A,h_1,h_2)} X_2 & X_1\\
  X_2 & A\\ };

\path [>=stealth,->]	(m-1-1) edge node[font=\small,above,text height=1.5ex,text depth=0.25ex,->] {$p_1$} (m-1-2)
		(m-1-1) edge node[font=\small,left,text height=1.5ex,text depth=0.25ex] {$p_2$} (m-2-1)
		(m-1-2) edge node[font=\small,right,text height=1.5ex,text depth=0.25ex] {$h_1$} (m-2-2)
		(m-2-1) edge node[font=\small,below,text height=1.5ex,text depth=0.25ex] {$h_2$} (m-2-2);
\end{tikzpicture}
\end{equation}
where $p_i : X_1 \oplus_{(A,h_1,h_2)} X_2 \to X_i$ is the projection map. Concretely we have
\begin{equation*}
X_1 \oplus_{(A,h_1,h_2)} X_2 = \{(x_1,x_2) \in X_1 \oplus X_2 \mid h_1(x_1) = h_2(x_2)\}.
\end{equation*}
If the maps $h_i$ are clear from context then we will usually write $X_1 \oplus_A X_2$ instead of $X_1 \oplus_{(A,h_1,h_2)} X_2$.
\end{pa}

\begin{rem}\label{rem:surj-map}
As we assume that $h_i : X_i \to A$ is surjective it is clear that the projection maps $p_i : X_1 \oplus_{(A,h_1,h_2)} X_2 \to X_i$ are also surjective. In fact, they are simply the restriction of the usual projection maps $p_i : X_1\oplus X_2 \to X_i$. We also note that $X_1 \oplus_{(A,h_1,h_2)} X_2 = X_1 \oplus X_2$ if and only if $A = \{0\}$.
\end{rem}

\begin{pa}\label{pa:fibre-prod-2}
Now consider the direct sum $\mathcal{R}_1\oplus \mathcal{R}_2 = (X_1\oplus X_2,\Phi,\widecheck{X}_1\oplus \widecheck{X}_2,\widecheck{\Phi})$ of root data, as in \cref{pa:root-data-constructions}. As $\Ker(h_i)$ contains the root lattice $\mathbb{Z}\Phi_i$ we clearly have $X_1 \oplus_{(A,h_1,h_2)} X_2$ contains the roots $\Phi = \{(\alpha_1,0),(0,\alpha_2) \mid \alpha_i \in \Phi_i\}$ so if $B := X_1 \oplus_{(A,h_1,h_2)} X_2$ then we may form the root datum
\begin{equation*}
\mathcal{R}_1 \oplus_{(A,h_1,h_2)} \mathcal{R}_2 := (\mathcal{R}_1\oplus\mathcal{R}_2)_B = (B,\Phi,\widecheck{B},\widecheck{\iota}_B(\widecheck{\Phi}))
\end{equation*}
induced by $B$. We say that $\mathcal{R}_1 \oplus_{(A,h_1,h_2)} \mathcal{R}_2$ is the \emph{central product} of $\mathcal{R}_1$ and $\mathcal{R}_2$ over $(A,h_1,h_2)$. Again we will usually denote $\mathcal{R}_1 \oplus_{(A,h_1,h_2)} \mathcal{R}_2$ by $\mathcal{R}_1 \oplus_A \mathcal{R}_2$ if the maps $h_1$ and $h_2$ are clear from context. We end this section by recording the following simple lemma.
\end{pa}

\begin{lem}\label{lem:fibre-prod}
Assume the notation of \cref{pa:fibre-prod,pa:fibre-prod-2} then the projection map $p_i : \mathcal{R}_1 \oplus_A \mathcal{R}_2 \to \mathcal{R}_i$ is a surjective homomorphism of root data. Furthermore, we have
\begin{align*}
\Ker(p_1) &= \{(0,x) \in X_1 \oplus_A X_2 \mid x \in \Ker(h_2)\},\\
\Ker(p_2) &= \{(x,0) \in X_1 \oplus_A X_2 \mid x \in \Ker(h_1)\},
\end{align*}
and $(X_1 \oplus_A X_2)/\Ker(p_i) \cong X_i$ has no torsion.
\end{lem}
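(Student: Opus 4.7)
The plan is to verify the three assertions in order: surjectivity and root datum compatibility of $p_i$, the explicit description of $\Ker(p_i)$, and the torsion-freeness of the quotient. The substance of the lemma is largely a matter of unwinding the construction of the central product in \cref{pa:fibre-prod,pa:fibre-prod-2}.

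First, I would check set-theoretic surjectivity of $p_1$: given $x_1 \in X_1$, surjectivity of $h_2$ supplies an $x_2 \in X_2$ with $h_2(x_2) = h_1(x_1)$, so $(x_1,x_2)$ lies in $X_1 \oplus_A X_2$ and maps to $x_1$ under $p_1$; the symmetric argument handles $p_2$, as noted already in \cref{rem:surj-map}. For the compatibility of $p_i$ with the root datum structure, I would invoke \cref{lem:induced-root-datum}: the inclusion $\iota_B : B \hookrightarrow X_1 \oplus X_2$ is a homomorphism of root data and its composition with the projection $X_1 \oplus X_2 \to X_i$, which is $p_i$, sends the embedded roots $(\alpha_1,0)$ onto $\alpha_1 \in \Phi_1$ (respectively the $(0,\alpha_2)$ onto zero) so that $p_i$ carries the root structure on the central product onto that of $\mathcal{R}_i$. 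Dually, the map $\widecheck{p}_i : \widecheck{X}_i \to \widecheck{B}$ defined via the adjunction $\langle p_i(b), y\rangle_{\mathcal{R}_i} = \langle b, \widecheck{p}_i(y)\rangle$ agrees with $\widecheck{\iota}_B$ restricted to the canonical image of $\widecheck{X}_i$ in $\widecheck{X}_1 \oplus \widecheck{X}_2$, so the coroots of $\mathcal{R}_i$ lift to coroots of $\mathcal{R}_1 \oplus_A \mathcal{R}_2$.

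For the kernel description, I would simply unwind definitions: if $(x_1,x_2) \in X_1 \oplus_A X_2$ satisfies $p_1(x_1,x_2) = 0$, then $x_1 = 0$, and the defining relation $h_1(x_1) = h_2(x_2)$ then forces $h_2(x_2) = 0$, i.e.\ $x_2 \in \Ker(h_2)$; conversely any such pair lies in the fibre product and is killed by $p_1$. The description of $\Ker(p_2)$ is symmetric.

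Finally, for the quotient, the first isomorphism theorem combined with the surjectivity established in the first step gives $(X_1 \oplus_A X_2)/\Ker(p_i) \cong X_i$, and $X_i$ is free (hence torsion-free) by definition of a root datum. I expect no serious obstacle: the only mildly delicate point is the coroot compatibility for $p_i$, but this is forced by the general behaviour of the induced root datum in \cref{lem:induced-root-datum} together with the elementary fact that the dual of a restriction of a projection is the corresponding inclusion on dual modules.
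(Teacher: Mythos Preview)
Your proposal is correct and follows essentially the same approach as the paper: factor $p_i$ as the composition of the inclusion $\iota_B : B \hookrightarrow X_1 \oplus X_2$ (a root-datum homomorphism by \cref{lem:induced-root-datum}) with the standard projection $p_i' : X_1 \oplus X_2 \to X_i$ (whose dual is the canonical inclusion $\widecheck{X}_i \hookrightarrow \widecheck{X}_1 \oplus \widecheck{X}_2$). The paper's proof is terser, leaving the surjectivity to \cref{rem:surj-map} and the kernel and torsion-freeness claims as immediate from the definitions, whereas you spell these out; but the logical skeleton is identical.
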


\begin{proof}
The usual projection map $p_i' : X_1 \oplus X_2 \to X_i$ defines a homomorphism of root data because $\widecheck{p}_i' : \widecheck{X}_i \to \widecheck{X}_1 \oplus \widecheck{X}_2$ is simply the canonical inclusion map. Therefore as $p_i = p_i'\circ\iota_B$, where $\iota_B : B \to X_1 \oplus X_2$ is the inclusion, we have $p_i$ is a homomorphism of root data by \cref{lem:induced-root-datum}.
\end{proof}

\section{Structure and Classification of Root Data}\label{sec:struct-class-root-data}
\begin{pa}\label{pa:def-param-I}
Let is denote by $\mathscr{I}$ the class of all triples $(\mathcal{R},\mathcal{T},K)$ such that:
\begin{itemize}
	\item $\mathcal{R} = (X,\Phi,\widecheck{X},\widecheck{\Phi})$ is a semisimple root datum,
	\item $K \subseteq X$ is a submodule containing $\Phi$,
	\item and $\mathcal{T} = (T,\emptyset,\widecheck{T},\emptyset)$ is a torus such that there exists a surjective $\mathbb{Z}$-module homomorphism $T \to X/K$.
\end{itemize}
We define an equivalence class on $\mathscr{I}$ by setting $(\mathcal{R}_1,\mathcal{T}_1,K_1) \sim (\mathcal{R}_2,\mathcal{T}_2,K_2)$ if there exist isomorphisms $\varphi : \mathcal{R}_1 \to \mathcal{R}_2$ and $\psi : \mathcal{T}_1 \to \mathcal{T}_2$ such that $\varphi(K_1) = K_2$. The resulting set of equivalence classes is denoted by $\mathscr{I}/{\sim}$ and we denote by $[\mathcal{R},\mathcal{T},K]$ the equivalence class containing $(\mathcal{R},\mathcal{T},K) \in \mathscr{I}$.
\end{pa}

\begin{pa}
Before moving on let us note that the equivalence classes $\mathscr{I}/{\sim}$ can be described in more concrete terms. Consider pairs $(\Phi,V)$ consisting of a real Euclidean vector space $V$, with inner product $(-|-) : V \times V \to \mathbb{R}$, and a crystallographic root system $\Phi \subseteq V$. Associated to $\Phi$ we have its weight lattice $\Omega \subseteq V$ and fundamental group $\Omega/\mathbb{Z}\Phi$. For each subgroup $X/\mathbb{Z}\Phi \leqslant \Omega/\mathbb{Z}\Phi$ we have a corresponding semisimple root datum $\mathcal{R}_X = (X,\Phi,\widecheck{X},\widecheck{\Phi})$. Here we have $\widecheck{X} = \Hom(X,\mathbb{Z})$ and
\begin{equation*}
\widecheck{\Phi} = \left\{\left. 2\frac{(-|\alpha)}{(\alpha|\alpha)} \,\right|\, \alpha \in \Phi\right\}.
\end{equation*}
\end{pa}

\begin{pa}\label{pa:concrete-descr}
The automorphism group $\Aut(\Phi) \leqslant \GL(V)$ is the stabiliser of $\Phi$. This group stabilises the weight lattice $\Omega$ hence it acts on the fundamental group $\Omega/\mathbb{Z}\Phi$. For the fixed pair $(\Phi,V)$ consider the set $\mathscr{J}_{(\Phi,V)}$ of triples $(X/\mathbb{Z}\Phi,K/\mathbb{Z}\Phi,n)$ where $X/\mathbb{Z}\Phi$ and $K/\mathbb{Z}\Phi$ are subgroups of the fundamental group $\Omega/\mathbb{Z}\Phi$ and $n \geqslant 0$ is an integer such that $n$ is greater than or equal to the number of invariant factors of $X/K$. The group $\Aut(\Phi)$ acts on $\mathscr{J}_{(\Phi,V)}$ by acting simultaneously on the first two factors. The map $\mathscr{J}_{(\Phi,V)} \to \mathscr{I}$ defined by $(X/\mathbb{Z}\Phi,K/\mathbb{Z}\Phi,n) \mapsto (\mathcal{R}_X,\mathcal{T}_n,K)$, where $\mathcal{T}_n = (\mathbb{Z}^n,\emptyset,\mathbb{Z}^n,\emptyset)$ is a torus of rank $n$, defines a bijection
\begin{equation*}
\bigsqcup_{(\Phi,V)/{\cong}} \left(\mathscr{J}_{(\Phi,V)}/\Aut(\Phi) \right) \to \mathscr{I}/{\sim}.
\end{equation*}
Here we have $(\Phi_1,V_1) \cong (\Phi_2,V_2)$ if there exists an isometry $V_1 \to V_2$ mapping $\Phi_1$ onto $\Phi_2$.
\end{pa}

\begin{rem}
The Weyl group $W(\Phi)$ of $\Phi$ is a normal subgroup of $\Aut(\Phi)$ which acts trivially on $\Omega/\mathbb{Z}\Phi$ so the action of $\Aut(\Phi)$ on $\Omega/\mathbb{Z}\Phi$ factors through the quotient $\Aut(\Phi)/W(\Phi)$. Moreover, this quotient is isomorphic to the automorphism group of the underlying Dynkin diagram of $\Phi$, see \cite[VI, \S4, 2, Cor.\ to Prop.\ 1]{bourbaki:2002:lie-groups-chap-4-6}.
\end{rem}

\begin{pa}\label{pa:iso-R-subset}
If $\mathcal{R} = (X,\Phi,\widecheck{X},\widecheck{\Phi})$ is a root datum then we get a triple $(\mathcal{R}_{\der},\mathcal{R}_{\rad},(\Phi^{\top}\oplus\widecheck{\Phi}^{\perp})/\Phi^{\perp}) \in \mathscr{I}$, c.f., \cref{pa:def-param-I,pa:radical}. This triple is contained in $\mathscr{I}$ because $\mathcal{R}_{\rad} = (X/\Phi^{\top},\emptyset,\widecheck{\Phi}^{\perp},\emptyset)$ and we have a surjective homomorphism
\begin{equation*}
X/\Phi^{\top} \to X/(\Phi^{\top} \oplus \widecheck{\Phi}^{\perp}) \cong (X/\widecheck{\Phi}^{\perp})/((\Phi^{\top} \oplus \widecheck{\Phi}^{\perp})/\widecheck{\Phi}^{\perp}).
\end{equation*}
If $f : \mathcal{R}_1 \to \mathcal{R}_2$ is an isomorphism between root data $\mathcal{R}_i = (X_i,\Phi_i,\widecheck{X}_i,\widecheck{\Phi}_i)$ then this naturally induces isomorphisms $\varphi : (\mathcal{R}_1)_{\der} \to (\mathcal{R}_1)_{\der}$ and $\psi : (\mathcal{R}_1)_{\rad} \to (\mathcal{R}_1)_{\rad}$. Moreover, we have
\begin{equation*}
\varphi((\Phi_1^{\top}\oplus\widecheck{\Phi}_1^{\perp})/\widecheck{\Phi}_1^{\perp}) = (\Phi_2^{\top}\oplus\widecheck{\Phi}_2^{\perp})/\widecheck{\Phi}_2^{\perp}.
\end{equation*}
Thus, if we fix an equivalence class $[\mathcal{R},\mathcal{T},K] \in \mathscr{I}/{\sim}$ then we have a well-defined subset
\begin{equation*}
\mathscr{R}[\mathcal{R},\mathcal{T},K] = \{\mathcal{R}' = (X',\Phi',\widecheck{X}',\widecheck{\Phi}') \in \mathscr{R} \mid (\mathcal{R}'_{\der},\mathcal{R}'_{\rad},(\Phi'^{\top}\oplus\widecheck{\Phi}'^{\perp})/\Phi'^{\perp}) \sim (\mathcal{R},\mathcal{T},K)\}.
\end{equation*}
Thus we obtain a partition
\begin{equation}\label{eq:partition-root-data-iso-cls}
\mathscr{R} = \bigsqcup_{[\mathcal{R},\mathcal{T},K] \in \mathscr{I}/\sim}\mathscr{R}[\mathcal{R},\mathcal{T},K].
\end{equation}
\end{pa}

\begin{pa}\label{pa:def-O}
To solve \cref{prob:iso-root-data} it is clearly sufficient, given \cref{eq:partition-root-data-iso-cls}, to give a parameterisation of the elements in a given set $\mathscr{R}[\mathcal{R},\mathcal{T},K]$. For this we will need to understand the structure of an arbitrary root datum. We will do this using the central products considered in the previous section. We begin with the following definition which will appear again later.
\end{pa}

\begin{definition}\label{def:der-embed-rd}
If $\mathcal{R}_i = (X_i,\Phi_i,\widecheck{X}_i,\widecheck{\Phi}_i)$ are root data with $i \in \{1,2\}$ then a homomorphism of root data $f : \mathcal{R}_2 \to \mathcal{R}_1$ is said to be a \emph{derived embedding} if $f : X_2 \to X_1$ is surjective. If $\mathcal{R}_1$ is endowed with a $p$-Steinberg/$p$-Frobenius endomorphism $\phi_1 : \mathcal{R}_1 \to \mathcal{R}_1$ then we additionally require that there exists a $p$-Steinberg/$p$-Frobenius endomorphism $\phi_2 : \mathcal{R}_2 \to \mathcal{R}_2$ such that $f\circ\phi_1 = \phi_2\circ f$.
\end{definition}

\begin{lem}\label{lem:iso-of-der-embed}
Let $f : \mathcal{R}_2 \to \mathcal{R}_1$ be a derived embedding of root data. Let us set $A = X_1/f(\Phi_2^{\top})$ and let $h_2 : X_2/\Phi_2^{\top} \to A$ be the map defined by $h_2(x+\Phi_2^{\top}) = f(x)+f(\Phi_2^{\top})$. If $h_1 : X_1 \to A$ is the natural projection map then the homomorphism $\phi : X_2 \to X_1 \oplus (X_2/\Phi_2^{\top})$ defined by $\phi(x) = (f(x),x+\Phi_2^{\top})$ defines an isomorphism of root data
\begin{equation*}
\phi : \mathcal{R}_2 \to \mathcal{R}_1 \oplus_{(A,h_1,h_2)} (\mathcal{R}_2)_{\rad}.
\end{equation*}
Moreover, we have $f = p_1\circ\phi$ where $p_1 : \mathcal{R}_1 \oplus_{(A,h_1,h_2)} (\mathcal{R}_2)_{\rad} \to \mathcal{R}_1$ is the projection map.
\end{lem}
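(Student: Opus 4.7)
The plan is to verify the lemma in four interlocking steps: (i) check that the central product on the right-hand side is legitimately formed, (ii) verify that $\phi$ lands in the fibre product, (iii) establish that $\phi$ is an isomorphism of underlying $\mathbb{Z}$-modules, and (iv) confirm that it respects the root datum structure and the claimed triangle.

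First I would check that the datum $(A,h_1,h_2)$ satisfies the setup of \cref{pa:fibre-prod,pa:fibre-prod-2}. The map $h_1$ is, by definition, a quotient map, hence surjective, and $\Phi_1 \subseteq \Ker(h_1)$ because $\Phi_1 = f(\Phi_2) \subseteq f(\Phi_2^{\top})$. The map $h_2$ is well defined, is surjective since $f$ is surjective (the composition $X_2 \to X_1 \to A$ factors through $X_2/\Phi_2^{\top}$), and there is nothing to check regarding roots of $(\mathcal{R}_2)_{\rad}$ since it is a torus. This permits the formation of $\mathcal{R}_1 \oplus_{(A,h_1,h_2)} (\mathcal{R}_2)_{\rad}$. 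The image of $\phi(x) = (f(x), x + \Phi_2^{\top})$ lies in the fibre product because both coordinates map to $f(x) + f(\Phi_2^{\top}) \in A$.

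The decisive observation for the isomorphism assertion is that $f$ restricts to a $\mathbb{Z}$-module isomorphism $\mathbb{Z}\Phi_2 \to \mathbb{Z}\Phi_1$. Indeed, the $p$-morphism structure underlying the homomorphism $f$ of root data delivers a bijection $\tau : \Phi_1 \to \Phi_2$ with $f \circ \tau = \mathrm{id}$; selecting compatible simple systems this bijection extends to a $\mathbb{Q}$-linear isomorphism $\mathbb{Q}\Phi_2 \to \mathbb{Q}\Phi_1$ restricting to an iso on root lattices. From this one deduces $\Ker(f) \cap \Phi_2^{\top} = \{0\}$: if $x \in \Phi_2^{\top}$ with $f(x) = 0$, then $nx \in \mathbb{Z}\Phi_2 \cap \Ker(f) = \{0\}$ for some $n > 0$, and $x = 0$ since $X_2$ is torsion-free. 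Injectivity of $\phi$ is then immediate. For surjectivity, given $(y,\bar{z})$ in the fibre product with $h_1(y) = h_2(\bar{z})$, one writes $y - f(z) = f(w)$ for some $w \in \Phi_2^{\top}$ and verifies that $\phi(z+w) = (y,\bar{z})$.

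To upgrade this to an isomorphism of root data, I would observe that for any $\alpha \in \Phi_2$ we have $\phi(\alpha) = (f(\alpha), 0)$ because $\alpha \in \mathbb{Z}\Phi_2 \subseteq \Phi_2^{\top}$; by construction of the central product (\cref{pa:fibre-prod-2}) the pair $(f(\alpha),0)$ is then one of the roots of $\mathcal{R}_1 \oplus_A (\mathcal{R}_2)_{\rad}$, since the torus factor contributes none. Compatibility of coroots follows from the direct computation $\widecheck{\phi}((\widecheck{\alpha}_1,0)) = \widecheck{f}(\widecheck{\alpha}_1)$ together with the fact that $f$ is already a morphism of root data. Finally, $f = p_1 \circ \phi$ is read off from the first coordinate. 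The main obstacle I anticipate is the subtle interplay between $\Phi_2^{\top}$ and $\Ker(f)$: the clean equality $\Ker(f) \cap \Phi_2^{\top} = \{0\}$ is what makes the fibre-product decomposition work, and it relies critically on the stronger fact that $f$ is an isomorphism on root lattices, not merely a surjection of root systems; once this is in hand the remaining verifications are largely bookkeeping.
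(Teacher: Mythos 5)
Your proposal is correct and follows essentially the same approach as the paper's proof: verify $\phi$ lands in the fibre product, prove surjectivity by lifting via $f$ and translating by an element of $\Phi_2^{\top}$, prove injectivity by observing $\Ker(\phi) = \Ker(f) \cap \Phi_2^{\top}$ and reducing to the isomorphism $\mathbb{Z}\Phi_2 \to \mathbb{Z}\Phi_1$, and finally match roots and coroots. The extra care you take in checking that $(A,h_1,h_2)$ legitimately defines a central product and in computing $\phi(\alpha) = (f(\alpha),0)$ for $\alpha \in \Phi_2$ is exactly the bookkeeping the paper elides.
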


\begin{proof}
Let us denote $\mathcal{R}_1 \oplus_{(A,h_1,h_2)} (\mathcal{R}_2)_{\rad}$ by $(B,\Phi,\widecheck{B},\widecheck{\Phi})$. It's clear that $\phi(X_2) \subseteq B$. Now assume $(x_1,x_2+\Phi_2^{\top}) \in B$ then by the surjectivity of $f$ there exists an element $x_2' \in X_2$ such that $x_1 = f(x_2')$. By assumption there exists an element $a \in \Phi_2^{\top}$ such that $f(x_2') = f(x_2) + f(a) = f(x_2+a)$. With this we see that $\phi(x_2+a) = (x_1,x_2+\Phi_2^{\top})$ so $\phi(X_2) = B$.

Now assume $x \in \Ker(\phi)$ then certainly $x \in \Phi_2^{\top} \cap \Ker(f)$. However, this means there exists an integer $n>0$ such that $nx \in \mathbb{Z}\Phi_2$ and $f(nx) = nf(x) = 0$. As $f$ restricts to a bijection $\Phi_2 \to \Phi_1$ it restricts to an isomorphism $\mathbb{Z}\Phi_2 \to \mathbb{Z}\Phi_1$. This means $nx = 0$ so $x = 0$ because $X_2$ is a free module. Thus we have shown that the map is an isomorphism of $\mathbb{Z}$-modules. The fact that $\phi$ is an isomorphism of root data follows immediately from the fact that $f$ is a homomorphism of root data. The final statement is also clear.
\end{proof}

\begin{cor}[{}{see \cite[8.1.10]{springer:2009:linear-algebraic-groups}}]\label{cor:structure-root-datum}
Let $\mathcal{R} = (X,\Phi,\widecheck{X},\widecheck{\Phi})$ be a root datum and let $f : \mathcal{R} \to \mathcal{R}_{\der}$ be the derived embedding defined by the projection map $f : X \to X/\widecheck{\Phi}^{\perp}$. Then we have an isomorphism $\phi : \mathcal{R} \to \mathcal{R}_{\der} \oplus_{(A,h_1,h_2)} \mathcal{R}_{\rad}$, defined by $\phi(x) = (x+\widecheck{\Phi}^{\perp},x+\Phi^{\top})$, where $A = X/(\Phi^{\top}\oplus\widecheck{\Phi}^{\perp})$ and $h_1 : X/\widecheck{\Phi}^{\perp} \to A$ and $h_2 : X/\Phi^{\top} \to A$ are defined by $h_1(x+\widecheck{\Phi}^{\perp}) = x+(\Phi^{\top} \oplus \widecheck{\Phi}^{\perp})$ and $h_2(x+\Phi^{\top}) = x+(\Phi^{\top} \oplus \widecheck{\Phi}^{\perp})$.
\end{cor}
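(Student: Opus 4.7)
The plan is to apply \cref{lem:iso-of-der-embed} directly to the derived embedding $f : \mathcal{R} \to \mathcal{R}_{\der}$ induced by the canonical projection $f : X \to X/\widecheck{\Phi}^{\perp}$. Thus the corollary reduces, modulo some bookkeeping, to checking that the lemma genuinely applies and that the objects it produces match the ones stated.

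First I would verify that this $f$ really is a derived embedding in the sense of \cref{def:der-embed-rd}. Surjectivity is obvious; to see that $f$ is a homomorphism of root data I invoke the construction in \cref{pa:radical}, where $\mathcal{R}_{\der}$ is by definition the datum co-induced by the submodule $\widecheck{\Phi}^{\top} \subseteq \widecheck{X}$. The dual $\widecheck{f}$ is then the canonical inclusion $\widecheck{\Phi}^{\top} \hookrightarrow \widecheck{X}$, which sends simple coroots to simple coroots and is compatible with the perfect pairing, so $f$ is a homomorphism of root data.

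Next I would identify the objects produced by \cref{lem:iso-of-der-embed}. The module $A$ of the lemma is $(X/\widecheck{\Phi}^{\perp})/f(\Phi^{\top})$; applying the third isomorphism theorem identifies this with $X/(\Phi^{\top}+\widecheck{\Phi}^{\perp})$. To match the statement I need the sum $\Phi^{\top}+\widecheck{\Phi}^{\perp}$ to be a direct sum, and this is the one non-formal check in the proof. I would establish $\Phi^{\top}\cap\widecheck{\Phi}^{\perp}=\{0\}$ by taking $x$ in the intersection, noting that $nx\in\mathbb{Z}\Phi$ for some integer $n>0$ and that $\langle nx,\widecheck{\alpha}\rangle=0$ for every $\alpha\in\Phi$; since the pairing between $\mathbb{Z}\Phi$ and $\mathbb{Z}\widecheck{\Phi}$ is non-degenerate (as witnessed by the Cartan matrix), this forces $nx=0$ and freeness of $X$ gives $x=0$.

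With this identification in hand, the maps $h_1$ and $h_2$ from \cref{lem:iso-of-der-embed} become precisely those in the statement of the corollary, and the isomorphism $\phi(x) = (f(x), x+\Phi^{\top})$ from the lemma becomes the asserted map $x \mapsto (x+\widecheck{\Phi}^{\perp}, x+\Phi^{\top})$. The main obstacle is the verification that $\Phi^{\top}\cap\widecheck{\Phi}^{\perp}=\{0\}$; every other step is formal and is governed entirely by the previous lemma.
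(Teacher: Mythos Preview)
Your proposal is correct and follows exactly the paper's route: apply \cref{lem:iso-of-der-embed} to the projection $f:X\to X/\widecheck{\Phi}^{\perp}$ and identify $(X/\widecheck{\Phi}^{\perp})/f(\Phi^{\top})$ with $X/(\Phi^{\top}\oplus\widecheck{\Phi}^{\perp})$. The paper's proof is a single sentence that records only this isomorphism; your verification that $\Phi^{\top}\cap\widecheck{\Phi}^{\perp}=\{0\}$ is a detail the paper treats as already established (the notation $\Phi^{\top}\oplus\widecheck{\Phi}^{\perp}$ is used freely from \cref{pa:iso-R-subset} onward), and your check that $f$ is a derived embedding is likewise taken for granted from the co-induced construction in \cref{pa:radical}.
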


\begin{proof}
This follows immediately from \cref{lem:iso-of-der-embed} by noting that we have an isomorphism
\begin{equation*}
(X/\widecheck{\Phi}^{\perp})/f(\Phi^{\top}) \cong X/(\Phi^{\top}\oplus\widecheck{\Phi}^{\perp}).
\end{equation*}
\end{proof}

\begin{pa}
By \cref{cor:structure-root-datum} we have, up to isomorphism, every root datum is a central product $\mathcal{R} \oplus_{(A,h_1,h_2)} \mathcal{T}$ where $\mathcal{R} = (X,\Phi,\widecheck{X},\widecheck{\Phi})$ is semisimple and $\mathcal{T} = (T,\emptyset,\widecheck{T},\emptyset)$ is a torus. Note that as $A$ is isomorphic to a quotient of $X/\mathbb{Z}\Phi$ we must have $A$ is finite because $\mathcal{R}$ is semisimple. The following notes that we can recover $\mathcal{R}$, $\mathcal{T}$, and $A$, directly from $\mathcal{R} \oplus_{(A,h_1,h_2)} \mathcal{T}$.
\end{pa}

\begin{lem}\label{lem:recovery}
Assume $\mathcal{R}' := \mathcal{R} \oplus_{(A,h_1,h_2)} \mathcal{T} = (B,\Phi',\widecheck{B},\widecheck{\Phi}')$ where $\mathcal{R} = (X,\Phi,\widecheck{X},\widecheck{\Phi})$ is a semisimple root datum, $\mathcal{T} = (T,\emptyset,\widecheck{T},\emptyset)$ is a torus, and $A$ is a finite $\mathbb{Z}$-module. Then the following hold:
\begin{enumerate}
	\item $\Ker(p_1) = \widecheck{\Phi}'^{\perp}$ so $p_1$ factors through an isomorphism of root data $\mathcal{R}_{\der}' \to \mathcal{R}$,
	\item $\Ker(p_2) = \Phi'^{\top}$ so $p_2$ factors through an isomorphism of root data $\mathcal{R}_{\rad}' \to \mathcal{T}$,
	\item $\Ker(h_1\circ p_1) = \Ker(h_2\circ p_2) = \Phi'^{\top} \oplus \widecheck{\Phi}'^{\perp}$ so $h_1\circ p_1 = h_2\circ p_2$ factors through an isomorphism of abelian groups $B/(\Phi'^{\top} \oplus \widecheck{\Phi}'^{\perp}) \to A$.
%
%
\end{enumerate}
\end{lem}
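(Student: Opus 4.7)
The plan is to deduce each of the three identities by a direct computation from the definitions in \cref{pa:radical} together with the explicit description of $\Ker(p_i)$ supplied by \cref{lem:fibre-prod}. Semisimplicity of $\mathcal{R}$ enters at two crucial moments; otherwise the argument is pure bookkeeping inside $B = X \oplus_A T$.

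For (a), I would first unpack the coroots of $\mathcal{R}'$: each element of $\widecheck{\Phi}'$ is $\widecheck{\iota}_B(\widecheck{\alpha},0)$ for some $\alpha \in \Phi$ (the torus contributes nothing), so the pairing formula in \cref{pa:root-data-constructions} gives $\langle (x,t), \widecheck{\iota}_B(\widecheck{\alpha},0)\rangle_{\mathcal{R}'} = \langle x,\widecheck{\alpha}\rangle_{\mathcal{R}}$ for any $(x,t) \in B$. Hence $\widecheck{\Phi}'^{\perp} = \{(x,t) \in B \mid x \in \widecheck{\Phi}^{\perp}\}$. Since $\mathcal{R}$ is semisimple, $\mathbb{Q}\widecheck{\Phi} = \mathbb{Q}\widecheck{X}$ and the perfect pairing forces $\widecheck{\Phi}^{\perp} = \{0\}$ inside $X$. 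Thus $\widecheck{\Phi}'^{\perp} = \{(0,t) \in B \mid t \in \Ker(h_2)\}$, which matches $\Ker(p_1)$ from \cref{lem:fibre-prod}. The induced map on quotients is then an isomorphism of root data because $p_1$ already is by \cref{lem:fibre-prod}.

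For (b), the key point is that $\mathbb{Z}\Phi' \subseteq X \oplus \{0\}$, so if $n\cdot(x,t) \in \mathbb{Z}\Phi'$ then $nt = 0$, and freeness of $T$ forces $t = 0$. Hence $\Phi'^{\top}$ reduces to $\{(x,0) \in B \mid x \in \Phi^{\top}\}$. Semisimplicity enters again: since $\mathbb{Q}\Phi = \mathbb{Q}X$, one has $\Phi^{\top} = X$, and membership of $(x,0)$ in $B$ amounts exactly to $x \in \Ker(h_1)$. This identifies $\Phi'^{\top}$ with $\Ker(p_2)$, again via \cref{lem:fibre-prod}.

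Finally for (c), the equality $h_1\circ p_1 = h_2\circ p_2$ is built into the fibre product, so the two kernels agree; explicitly
\begin{equation*}
\Ker(h_1\circ p_1) = \{(x,t) \in B \mid x \in \Ker(h_1)\},
\end{equation*}
and the defining condition of $B$ automatically forces $t \in \Ker(h_2)$ (and conversely any such pair lies in $B$). By (a) and (b) this set is precisely $\Phi'^{\top} + \widecheck{\Phi}'^{\perp}$, and the sum is direct because the two summands lie in $X \oplus \{0\}$ and $\{0\} \oplus T$ respectively. I do not anticipate a real obstacle here; the only subtlety is remembering to invoke semisimplicity to collapse $\widecheck{\Phi}^{\perp}$ to zero and to expand $\Phi^{\top}$ to all of $X$, after which everything is a direct unwinding of definitions.
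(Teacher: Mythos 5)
Your argument is correct and follows essentially the same route as the paper: identify $\widecheck{\Phi}'^{\perp}$ and $\Phi'^{\top}$ by unwinding the coroot pairing and the span of $\Phi'$ inside $B$, use semisimplicity of $\mathcal{R}$ to get $\widecheck{\Phi}^{\perp}=\{0\}$ and $\Phi^{\top}=X$, match these to the explicit kernels from \cref{lem:fibre-prod}, and for (c) observe that the fibre-product condition on $B$ makes $\Ker(h_1\circ p_1)=\Ker(p_1)\oplus\Ker(p_2)$. The only thing you leave tacit is the surjectivity of $h_1\circ p_1$ (needed to finish (c)), which the paper gets from \cref{rem:surj-map}.
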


\begin{proof}
(a). As $\mathcal{T}$ has no roots we see that $(x,t) \in \widecheck{\Phi}'^{\perp}$ if and only if $x \in \widecheck{\Phi}^{\perp} \subseteq X$ but $\widecheck{\Phi}^{\perp} = \{0\}$ because $\mathcal{R}$ is semisimple. Hence $\Ker(p_1) = \widecheck{\Phi}'^{\perp}$. We leave (b) to the reader.

(c). By the commutativity of the diagram in \cref{eq:fibre-prod}, \cref{lem:fibre-prod}, and parts (a) and (b) it follows that
\begin{equation*}
\Ker(h_1\circ p_1) = \Ker(h_2\circ p_2) = \Ker(p_1)\oplus\Ker(p_2) = \widecheck{\Phi}'^{\perp} \oplus \Phi'^{\top}.
\end{equation*}
As the composition $h_1\circ p_1 = h_2\circ p_2$ is surjective, see \cref{rem:surj-map}, the statement follows.
%
\end{proof}

\begin{cor}\label{cor:iso-central-prod}
Let $\mathcal{R}_i' := \mathcal{R}_i \oplus_{(A_i,h_i,f_i)} \mathcal{T}_i = (B_i,\Phi_i',\widecheck{B}_i,\widecheck{\Phi}_i')$ be a central product where $\mathcal{R}_i = (X_i,\Phi_i,\widecheck{X}_i,\widecheck{\Phi}_i)$ is a semisimple root datum and $\mathcal{T}_i = (T_i,\emptyset,\widecheck{T}_i,\emptyset)$ is a torus with $i \in \{1,2\}$. If $\zeta : \mathcal{R}_1' \to \mathcal{R}_2'$ is a $p$-morphism then there exist $p$-morphisms $\zeta_1 : \mathcal{R}_1 \to \mathcal{R}_2$ and $\zeta_2 : \mathcal{T}_1 \to \mathcal{T}_2$ and a homomorphism $\zeta_3 : A_1 \to A_2$ such that the following hold:
\begin{enumerate}
	\item $\zeta = (\zeta_1\oplus \zeta_2)|_{B_1}$,
	\item $\zeta_3\circ h_1 = h_2\circ \zeta_1$,
	\item $\zeta_3\circ f_1 = f_2\circ \zeta_2$.
\end{enumerate}
In particular, we have $\zeta_1(\Ker(h_1)) \subseteq \Ker(h_2)$. If $\zeta$ is an isomorphism then so is each $\zeta_i$ and we have $\zeta_1(\Ker(h_1)) = \Ker(h_2)$.
\end{cor}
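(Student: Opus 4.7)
The plan is to use \cref{lem:recovery} to recognise $\mathcal{R}_i$, $\mathcal{T}_i$, and $A_i$ as canonical subquotients of $\mathcal{R}_i'$, and then descend $\zeta$ along these subquotients. The main preparatory step, which I expect to be the most delicate, is showing that the underlying $\mathbb{Z}$-module map $\zeta : B_1 \to B_2$ satisfies $\zeta(\widecheck{\Phi}_1'^{\perp}) \subseteq \widecheck{\Phi}_2'^{\perp}$ and $\zeta(\Phi_1'^{\top}) \subseteq \Phi_2'^{\top}$. For the second inclusion, the Springer condition from \cref{pa:p-morphism} forces $\zeta(\Phi_1') \subseteq \mathbb{Z}\Phi_2'$; applying this to $nx \in \mathbb{Z}\Phi_1'$ for $x \in \Phi_1'^{\top}$ and using that $B_2$ is torsion-free gives $\zeta(x) \in \Phi_2'^{\top}$. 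For the first, \cref{pa:dual-p-morphism} furnishes a dual $p$-morphism $\widecheck{\zeta}$ satisfying $\widecheck{\zeta}(\widecheck{\Phi}_2') \subseteq \mathbb{Z}\widecheck{\Phi}_1'$, and combining this with the adjunction $\langle \zeta(x),y'\rangle_{\mathcal{R}_2'} = \langle x,\widecheck{\zeta}(y')\rangle_{\mathcal{R}_1'}$ yields the claim.

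With these inclusions in hand, \cref{lem:recovery} provides surjections $p_1^{(i)} : B_i \to X_i$ and $p_2^{(i)} : B_i \to T_i$ with kernels $\widecheck{\Phi}_i'^{\perp}$ and $\Phi_i'^{\top}$ respectively, so $\zeta$ descends to unique $\mathbb{Z}$-module homomorphisms $\zeta_1 : X_1 \to X_2$ and $\zeta_2 : T_1 \to T_2$ with $p_j^{(2)}\circ \zeta = \zeta_j\circ p_j^{(1)}$ for $j \in \{1,2\}$. The bijection $\tau$ and function $q$ attached to $\zeta$ transfer to $\zeta_1$ through the identification of the roots of $(\mathcal{R}_i')_{\der}$ with $\Phi_i$, making $\zeta_1$ a $p$-morphism; for $\zeta_2$ nothing further is required since $\mathcal{T}_i$ has no roots. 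Part (a) is then immediate because the pair $(p_1^{(i)},p_2^{(i)})$ exhibits $B_i$ as a submodule of $X_i \oplus T_i$, so $\zeta$ is determined by its two projections and must coincide with $(\zeta_1\oplus\zeta_2)|_{B_1}$.

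For $\zeta_3$, since $\zeta$ stabilises both $\Phi_i'^{\top}$ and $\widecheck{\Phi}_i'^{\perp}$ it stabilises their direct sum, and so descends to a homomorphism of quotients $B_i/(\Phi_i'^{\top}\oplus\widecheck{\Phi}_i'^{\perp})$; transporting this along the isomorphism supplied by \cref{lem:recovery}(c) yields the required $\zeta_3 : A_1 \to A_2$. Both identities (b) and (c) hold by construction, since the isomorphism of \cref{lem:recovery}(c) is realised simultaneously by $h_i\circ p_1^{(i)}$ and $f_i\circ p_2^{(i)}$. The inclusion $\zeta_1(\Ker(h_1))\subseteq\Ker(h_2)$ is an immediate consequence of (b). Finally, if $\zeta$ is an isomorphism then applying the entire construction to $\zeta^{-1}$ produces maps $(\zeta^{-1})_j$ which, by the uniqueness built into the descent, must be two-sided inverses of the $\zeta_j$; the reverse inclusion $\Ker(h_2)\subseteq \zeta_1(\Ker(h_1))$ then follows by applying the forward inclusion to $\zeta^{-1}$ and $\zeta_1^{-1}$.
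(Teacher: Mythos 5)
Your proposal is correct and follows essentially the same route as the paper's proof: establish that the underlying $\mathbb{Z}$-module map $\zeta$ stabilises the submodules $\widecheck{\Phi}_i'^{\perp}$ and $\Phi_i'^{\top}$, descend $\zeta$ to the three quotients $(\mathcal{R}_i')_{\der}$, $(\mathcal{R}_i')_{\rad}$, and $B_i/(\Phi_i'^{\top}\oplus\widecheck{\Phi}_i'^{\perp})$, and transport across the identifications of \cref{lem:recovery} to produce $\zeta_1$, $\zeta_2$, and $\zeta_3$. The only difference is one of exposition: where the paper simply asserts that the two inclusions $\zeta(\widecheck{\Phi}_1'^{\perp})\subseteq\widecheck{\Phi}_2'^{\perp}$ and $\zeta(\Phi_1'^{\top})\subseteq\Phi_2'^{\top}$ are ``clear'' for a $p$-morphism, you spell out the argument (saturation of $\Phi_2'^{\top}$, duality/adjunction for $\widecheck{\Phi}'^{\perp}$), which is a worthwhile elaboration but not a deviation in method.
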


\begin{proof}
As $\zeta$ is a $p$-morphism it is clear that $\zeta(\widecheck{\Phi}_1'^{\perp}) \subseteq \widecheck{\Phi}_2'^{\perp}$ and $\zeta(\Phi_1'^{\top}) \subseteq \Phi_2'^{\top}$ so we obtain induced $\mathbb{Z}$-module homomorphisms $\zeta : B_1/\widecheck{\Phi}_1'^{\perp} \to B_2/\widecheck{\Phi}_2'^{\perp}$, $\zeta : B_1/\Phi_1'^{\top} \to B_2/\Phi_2'^{\top}$, and $\zeta : B_1/(\widecheck{\Phi}_1'^{\perp} \oplus \Phi_1'^{\top}) \to B_2/(\widecheck{\Phi}_2'^{\perp} \oplus \Phi_2'^{\top})$. In particular, these define $p$-morphisms $(\mathcal{R}_1')_{\der} \to (\mathcal{R}_2')_{\der}$ and $(\mathcal{R}_1')_{\rad} \to (\mathcal{R}_2')_{\rad}$ and we get the following diagram where each square commutes
\begin{equation*}
\begin{tikzpicture}[baseline=(current  bounding  box.center)]
\matrix (m) [matrix of math nodes, row sep=3em, column sep=3em, text height=1.5ex, text depth=0.25ex]
{ X_1 & B_1/\widecheck{\Phi}_1'^{\perp} & B_2/\widecheck{\Phi}_2'^{\perp} & X_2\\
  A_1 & B_1/(\Phi_1'^{\top} \oplus \widecheck{\Phi}_1'^{\perp}) & B_2/(\Phi_2'^{\top} \oplus \widecheck{\Phi}_2'^{\perp}) & A_2\\
 T_1 & B_1/\Phi_1'^{\top} & B_2/\Phi_2'^{\top} & T_2\\ };

\path [>=stealth,->]	
        (m-1-1) edge node[font=\small,left,text height=1.5ex,text depth=0.25ex,->] {$h_1$} (m-2-1)
        (m-1-2) edge (m-1-1)
        (m-2-2) edge (m-2-1)
        (m-1-2) edge node[font=\small,above,text height=1.5ex,text depth=0.25ex,->] {$\zeta$} (m-1-3)
        (m-1-2) edge (m-2-2)
        (m-2-2) edge node[font=\small,above,text height=1.5ex,text depth=0.25ex,->] {$\zeta$} (m-2-3)
        (m-1-3) edge (m-1-4)
		(m-1-3) edge (m-2-3)
		(m-1-4) edge node[font=\small,left,text height=1.5ex,text depth=0.25ex] {$h_2$} (m-2-4)
		(m-2-3) edge (m-2-4)
		(m-3-2) edge (m-3-1)
		(m-3-2) edge node[font=\small,above,text height=1.5ex,text depth=0.25ex,->] {$\zeta$} (m-3-3)
		(m-3-3) edge (m-3-4)
		(m-3-1) edge node[font=\small,left,text height=1.5ex,text depth=0.25ex] {$f_1$} (m-2-1)
		(m-3-2) edge (m-2-2)
		(m-3-3) edge (m-2-3)
		(m-3-4) edge node[font=\small,left,text height=1.5ex,text depth=0.25ex] {$f_2$} (m-2-4);
\end{tikzpicture}
\end{equation*}
Here the unmarked horizontal maps are the isomorphisms given by \cref{lem:recovery}; the unmarked vertical maps are the canonical projections. The top row now defines $\zeta_1$, the middle row defines $\zeta_3$, and the bottom row defines $\zeta_2$.
\end{proof}

\begin{rem}\label{rem:lifting-Steinberg-Frob}
If $\mathcal{R}_1' = \mathcal{R}_2'$ in \cref{cor:iso-central-prod} and $\zeta$ is a $p$-Steinberg, resp., $p$-Frobenius, endomorphism then so are $\zeta_1$ and $\zeta_2$.
\end{rem}

\begin{definition}\label{def:tame-auto}
Assume $\mathcal{R} = (X,\Phi,\widecheck{X},\widecheck{\Phi})$ is any root datum and $f : X \to A$ is a fixed surjective homomorphism of $\mathbb{Z}$-modules such that $\Phi \subseteq K := \Ker(f)$. We denote by $\Aut(\mathcal{R})_K$ those automorphisms $\tilde{\psi} \in \Aut(\mathcal{R})$ of the root datum satisfying $\tilde{\psi}(K) = K$. Any such automorphism determines an automorphism of $X/K \cong A$ so we have a homomorphism $\Aut(\mathcal{R})_K \to \Aut(A)$ whose image we denote by $\Aut_{(\mathcal{R},f)}(A)$. The elements of $\Aut_{(\mathcal{R},f)}(A)$ are said to be \emph{tame} with respect to $(\mathcal{R},f)$. In other words, $\Aut_{(\mathcal{R},f)}(A)$ consists of those automorphisms $\psi \in \Aut(A)$ for which there exists an automorphism $\tilde{\psi} \in \Aut(\mathcal{R})$ satisfying $\psi\circ f = f\circ\tilde{\psi}$.
\end{definition}

\begin{pa}
If one takes $\mathcal{R}$ to be a torus in \cref{def:tame-auto} then $\Phi = \emptyset$ so one is simply asking for a $\mathbb{Z}$-module automorphism $\tilde{\psi} : X \to X$ that lifts $\psi$ with respect to $f$. We give an example illustrating this point, which will be useful later on.
\end{pa}

\begin{exmp}\label{exmp:cyclic-group}
Assume $n \geqslant 1$ is an integer and set $A := \mathbb{Z}/n\mathbb{Z}$. We assume $\mathcal{T} = (T,\emptyset,\widecheck{T},\emptyset)$ is a torus and $f : T \to A$ is a fixed surjective homomorphism. Let $(t_1,\dots,t_r)$ be a basis of $T$ adapted to $f$, i.e., we have $(nt_1,t_2,\dots,t_r)$ is a basis of $\Ker(f)$. With this choice of basis we will identify $\Aut(T)$ with $\GL_r(\mathbb{Z})$.

Let us first assume that $r = 1$. If $k \in \mathbb{Z}$ then $\psi_k : A \to A$, defined by $\psi_k(x) = kx$ for all $x \in A$, is a homomorphism. Moreover, we have $\Aut(A) = \{\psi_k \mid 1 \leqslant k < n$ and $\gcd(k,n) = 1\} \cong \mathbb{Z}/\varphi(n)\mathbb{Z}$ where $\varphi(n)$ is the evaluation at $n$ of Euler's totient function. It's clear that $\Aut_{(\mathcal{T},f)}(A) = \{\psi_1,\psi_{n-1}\}$ because $\Aut(T) \cong \GL_1(\mathbb{Z}) = \{(1),(-1)\}$.

Now consider the case where $r = 2$ and again let $\psi_k \in \Aut(A)$. As $\gcd(k,n) = 1$ we have by B\'ezout's identity that there exist integers $a,b \in \mathbb{Z}$ such that $ka+nb = 1$. The matrix
\begin{equation*}
\begin{bmatrix}
k & -n\\
b & a
\end{bmatrix} \in \SL_2(\mathbb{Z})
\end{equation*}
thus determines an automorphism of $T$ lifting $\psi$. Hence, in this case we have $\Aut_{(\mathcal{T},f)}(A) = \Aut(A)$. If $r > 2$ then $T = \langle t_1,t_2\rangle\oplus \langle t_3,\dots,t_r\rangle$ so it's clear we again have $\Aut_{(\mathcal{T},f)}(A) = \Aut(A)$.
\end{exmp}

\begin{thm}\label{thm:smooth-reg-embed-classification}
Assume $(\mathcal{R},\mathcal{T},K) \in \mathscr{I}$ is a triple with $\mathcal{R} = (X,\Phi,\widecheck{X},\widecheck{\Phi})$ and fix a pair of surjective $\mathbb{Z}$-module homomorphisms $h : X \to X/K$ and $f : T \to X/K$ such that $\Ker(h) = K$. If $A = X/K$ then the map $\Aut(A) \to \mathscr{R}$ defined by $\psi \mapsto \mathcal{R} \oplus_{(A,h,\psi\circ f)} \mathcal{T}$ induces a bijection
\begin{equation*}
\Aut_{(\mathcal{R},h)}(A) \backslash \Aut(A) / \Aut_{(\mathcal{T},f)}(A) \to \mathscr{R}[\mathcal{R,T},K].
\end{equation*}
Here $\Aut_{(\mathcal{R},h)}(A)$ are those automorphisms of $A$ that are induced by an automorphism of $\mathcal{R}$ and similarly for $\Aut_{(\mathcal{T},f)}(A)$, see \cref{def:tame-auto} for details.
\end{thm}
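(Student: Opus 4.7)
The plan is to verify, in sequence, that the map $\psi \mapsto \mathcal{R}' := \mathcal{R} \oplus_{(A, h, \psi \circ f)} \mathcal{T}$ takes values in $\mathscr{R}[\mathcal{R}, \mathcal{T}, K]$, descends to the double coset quotient, is injective on double cosets, and is surjective. For the first point, since $\mathcal{R}$ is semisimple we have $\Phi^{\top} = X$ and $\widecheck{\Phi}^{\perp} = 0$; combining this with \cref{lem:fibre-prod}, a direct computation inside $B' = X \oplus_{(A, h, \psi \circ f)} T$ yields $\Phi'^{\top} = K \oplus 0$ and $\widecheck{\Phi}'^{\perp} = 0 \oplus \Ker(f)$. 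Together with the canonical isomorphisms supplied by \cref{lem:recovery}, this gives $(\mathcal{R}'_{\der}, \mathcal{R}'_{\rad}, (\Phi'^{\top} \oplus \widecheck{\Phi}'^{\perp})/\widecheck{\Phi}'^{\perp}) \sim (\mathcal{R}, \mathcal{T}, K)$, so $\mathcal{R}' \in \mathscr{R}[\mathcal{R}, \mathcal{T}, K]$.

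For well-definedness, suppose $\psi_1 = \alpha \psi_2 \beta$ with $\alpha \in \Aut_{(\mathcal{R}, h)}(A)$ and $\beta \in \Aut_{(\mathcal{T}, f)}(A)$, and fix lifts $\tilde{\alpha} \in \Aut(\mathcal{R})_K$ and $\tilde{\beta} \in \Aut(\mathcal{T})$ satisfying $\alpha \circ h = h \circ \tilde{\alpha}$ and $\beta \circ f = f \circ \tilde{\beta}$. The map $(x, t) \mapsto (\tilde{\alpha}(x), \tilde{\beta}^{-1}(t))$ lies in $\Aut(\mathcal{R}) \oplus \Aut(\mathcal{T})$, and the identity $f \circ \tilde{\beta}^{-1} = \beta^{-1} \circ f$ shows that it restricts to an isomorphism of the two fibre products, hence to an isomorphism of the two central products of root data.

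Injectivity follows from \cref{cor:iso-central-prod}: any isomorphism $\zeta : \mathcal{R} \oplus_{(A, h, \psi_1 \circ f)} \mathcal{T} \to \mathcal{R} \oplus_{(A, h, \psi_2 \circ f)} \mathcal{T}$ decomposes into automorphisms $\zeta_1 \in \Aut(\mathcal{R})$, $\zeta_2 \in \Aut(\mathcal{T})$, and $\zeta_3 \in \Aut(A)$ satisfying $\zeta_3 \circ h = h \circ \zeta_1$ and $\zeta_3 \circ \psi_1 \circ f = \psi_2 \circ f \circ \zeta_2$. The first relation places $\zeta_3 \in \Aut_{(\mathcal{R}, h)}(A)$, while the second forces $\zeta_2(\Ker f) \subseteq \Ker f$, so $\zeta_2$ induces some $\beta \in \Aut_{(\mathcal{T}, f)}(A)$; cancelling the surjection $f$ on the right then yields $\zeta_3 \circ \psi_1 = \psi_2 \circ \beta$, exhibiting $\psi_1$ and $\psi_2$ as representatives of the same double coset.

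The main obstacle is surjectivity. Fix $\mathcal{R}'' \in \mathscr{R}[\mathcal{R}, \mathcal{T}, K]$; by \cref{cor:structure-root-datum} together with the equivalence defining $\mathscr{R}[\mathcal{R}, \mathcal{T}, K]$, one transports $\mathcal{R}''$ into the form $\mathcal{R} \oplus_{(A, h', f')} \mathcal{T}$ for some surjections $h' : X \to A$ with $\Ker h' = K$ and $f' : T \to A$. Since $h$ and $h'$ share the kernel $K$, there is a unique $\alpha_0 \in \Aut(A)$ with $h' = \alpha_0 \circ h$. To align the torus maps, observe that both $\Ker f$ and $\Ker f'$ are submodules of $T$ with quotient isomorphic to $A$; by \cref{lem:adapted-basis}, choosing adapted bases of $T$ for $f$ and $f'$ produces an automorphism $\gamma \in \Aut(T) = \Aut(\mathcal{T})$ with $\gamma(\Ker f) = \Ker f'$. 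Then $f' \circ \gamma$ and $f$ have the same kernel, so $f' \circ \gamma = \psi_0 \circ f$ for a unique $\psi_0 \in \Aut(A)$, and the map $(x, t) \mapsto (x, \gamma^{-1}(t))$ induces an isomorphism $\mathcal{R} \oplus_{(A, h', f')} \mathcal{T} \cong \mathcal{R} \oplus_{(A, h', \psi_0 \circ f)} \mathcal{T}$. Since the defining conditions satisfy $\alpha_0 \circ h(x) = \psi_0 \circ f(t) \iff h(x) = (\alpha_0^{-1} \circ \psi_0) \circ f(t)$, this root datum coincides with $\mathcal{R} \oplus_{(A, h, \psi \circ f)} \mathcal{T}$ for $\psi = \alpha_0^{-1} \circ \psi_0$, completing the proof.
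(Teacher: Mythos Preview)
Your proof is correct and follows essentially the same approach as the paper: both arguments use \cref{cor:structure-root-datum} and \cref{lem:recovery} together with adapted bases for surjectivity, the obvious $(\Aut(\mathcal{R})\oplus\Aut(\mathcal{T}))$-map for well-definedness, and \cref{cor:iso-central-prod} for injectivity. The only cosmetic differences are that you make the verification that the image lands in $\mathscr{R}[\mathcal{R},\mathcal{T},K]$ explicit (the paper leaves this implicit in \cref{lem:recovery}), and your well-definedness isomorphism $(\tilde{\alpha},\tilde{\beta}^{-1})$ runs from the $\psi_2$-datum to the $\psi_1$-datum whereas the paper writes the inverse map $(\tilde{\zeta}^{-1},\tilde{\psi})$ in the other direction.
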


\begin{proof}
By \cref{cor:structure-root-datum,lem:recovery} any root datum $\mathcal{R}' \in \mathscr{R}[\mathcal{R},\mathcal{T},K]$ is isomorphic to a central product $\mathcal{R} \oplus_{(X/K,h',f')} \mathcal{T}$ with $h' : X \to X/K$ and $f' : T \to X/K$ surjective homomorphisms with $\Ker(h') = K$. Now, choose a basis $(x_1,\dots,x_n)$ of $X$ adapted to $h : X \to A$. By assumption $\Ker(h) = \Ker(h')$ so we have an automorphism $\psi_1 \in \Aut(A)$, defined by $\psi_1(h(x_i)) = h'(x_i)$ for $1 \leqslant i \leqslant s$, which satisfies $h' = \psi_1 \circ h$.

Again we choose a basis $(t_1,\dots,t_n)$, resp., $(t_1',\dots,t_n')$, of $T$ adapted to $f$, resp., $f'$. As $T/\Ker(f) \cong T/\Ker(f') \cong A$ we have automorphisms $\lambda : T \to T$, defined by $\lambda(t_i') = t_i$, and $\psi_2 : A \to A$, defined by $\psi_2(f(t_i)) = f'(t_i')$, which satisfy $\psi_2 \circ f \circ \lambda = f'$. If $\psi = \psi_1^{-1}\circ\psi_2$ then we have an isomorphism
\begin{equation*}
X \oplus_{(A,h',f')} T \to X \oplus_{(A,h,\psi\circ f)} T = X \oplus_{(A,\psi_1\circ h,\psi_2\circ f)} T
\end{equation*}
defined by $(x,t) \mapsto (x,\lambda(t))$. This is clearly an isomorphism of root data $\mathcal{R} \oplus_{(A,h',f')} \mathcal{T} \to \mathcal{R} \oplus_{(A,h,\psi\circ f)} \mathcal{T}$ which shows the map is surjective.

Now assume $\psi_1,\psi_2 \in \Aut(A)$ are such that $\psi_1 = \zeta \circ \psi_2\circ\psi$ for some $\psi \in \Aut_{(\mathcal{T},f)}(A)$ and $\zeta \in \Aut_{(\mathcal{R},h)}(A)$. As $\psi$ and $\zeta$ are tame there exist automorphisms $\tilde{\psi} \in \Aut(\mathcal{T})$ and $\tilde{\zeta} \in \Aut(\mathcal{R})$ such that $\zeta \circ h = h \circ \tilde{\zeta}$ and $\psi \circ f = f \circ \tilde{\psi}$. As we also have $\zeta^{-1} \circ h = h \circ \tilde{\zeta}^{-1}$ one easily checks that $(x,t) \mapsto (\tilde{\zeta}^{-1}(x),\tilde{\psi}(t))$ defines an isomorphism of root data $\mathcal{R} \oplus_{(A,h,\psi_1\circ f)} \mathcal{T} \to \mathcal{R} \oplus_{(A,h,\psi_2\circ f)} \mathcal{T}$ which shows the map is well defined.

Finally let $\mathcal{R} \oplus_{(A,h,\psi_1\circ f)} \mathcal{T} = (B_1,\Phi_1,\widecheck{B}_1,\widecheck{\Phi}_1)$ and $\mathcal{R} \oplus_{(A,h,\psi_2\circ f)} \mathcal{T} = (B_2,\Phi_2,\widecheck{B}_2,\widecheck{\Phi}_2)$ and let us assume that we have an isomorphism $\zeta : \mathcal{R} \oplus_{(A,h,\psi_1\circ f)} \mathcal{T} \to \mathcal{R} \oplus_{(A,h,\psi_2\circ f)} \mathcal{T}$. By \cref{cor:iso-central-prod} we have automorphisms $\zeta_1 : X \to X$, $\zeta_2 : T \to T$, and $\zeta_3 : A \to A$ such that $\zeta_3 \circ h = h \circ \zeta_1$ and $\zeta_3 \circ (\psi_1\circ f) = (\psi_2\circ f)\circ \zeta_2$. If $\psi = \psi_2^{-1} \circ \zeta_3\circ\psi_1$ then we have $\psi\circ f = f\circ\zeta_2$ so $\psi \in \Aut_{(\mathcal{T},f)}(A)$ and moreover $\psi_1 = \zeta_3^{-1}\circ\psi_2 \circ \psi$. Note that $\zeta_3^{-1} \in \Aut_{(\mathcal{R},h)}(A)$ and $\psi \in \Aut_{(\mathcal{T},f)}(A)$ so the map is injective.
\end{proof}

\begin{prop}\label{prop:aut-ab-lift}
Assume $\mathcal{T} = (T,\emptyset,\widecheck{T},\emptyset)$ is a torus and $f : T \to A$ is a surjective homomorphism onto a finite $\mathbb{Z}$-module. If $A$ has $s > 0$ invariant factors and $\rk(\mathcal{T}) \geqslant s+1$ then $\Aut_{(\mathcal{T},f)}(A) = \Aut(A)$.
\end{prop}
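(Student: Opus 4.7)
The first step is to use \cref{lem:adapted-basis} to fix a basis $(t_1,\dots,t_n)$ of $T$ adapted to $f$; then $(d_1 t_1, \dots, d_s t_s, t_{s+1}, \dots, t_n)$ is a basis of $\Ker(f)$, with $d_s \mid d_{s-1} \mid \cdots \mid d_1$ the invariant factors of $A$. Setting $e_i = f(t_i)$ for $i \leqslant s$, I identify $A \cong \bigoplus_{i=1}^s \mathbb{Z}/d_i\mathbb{Z}$ with $e_i$ a generator of the $i$-th summand. The hypothesis $\rk(T) = n \geqslant s+1$ provides a ``spare'' basis vector $t_{s+1}$ which itself lies in $\Ker(f)$ (its invariant factor being $1$). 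Under this identification, the problem becomes: given $\psi \in \Aut(A)$, construct an automorphism $\tilde\psi$ of $T$ preserving $\Ker(f)$ which induces $\psi$ on the quotient.

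My strategy is to express an arbitrary $\psi \in \Aut(A)$ as a product of three types of ``elementary'' automorphisms, each of which I lift explicitly. Namely, \emph{(i)} the scalings $\mu_{i,k} : e_i \mapsto k e_i$ fixing the other summands, for $\gcd(k, d_i) = 1$; \emph{(ii)} the shears $\tau_{i,j,c} : e_i \mapsto e_i + c e_j$ fixing the other summands, for $i, j \leqslant s$ and any integer $c$ satisfying the well-definedness condition $d_j \mid c d_i$; and \emph{(iii)} transpositions of two summands whose invariant factors coincide. That these families generate $\Aut(A)$ is a classical Gauss--Jordan-style fact, which may be verified by induction on $s$, using the divisibility $d_s \mid \cdots \mid d_1$ to clear off-diagonal entries in a matrix representing~$\psi$.

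The shears (ii) lift tautologically via $t_i \mapsto t_i + c t_j$ (and identity elsewhere); the condition $d_j \mid c d_i$ is precisely what is needed for $c d_i t_j$ to lie in $\Ker(f)$, hence for $\Ker(f)$ to be preserved. The transpositions (iii) lift by swapping the corresponding $t_i$ and $t_j$, which preserves $\Ker(f)$ because the equality $d_i = d_j$ carries $d_i t_i$ to $d_i t_j = d_j t_j \in \Ker(f)$. The essential use of the hypothesis $\rk(T) \geqslant s+1$ appears in lifting the scalings (i), directly generalising the construction in \cref{exmp:cyclic-group}: given $\mu_{i,k}$ with $\gcd(k, d_i) = 1$, I apply B\'ezout to write $uk + v d_i = 1$ and set
\[
\tilde\mu_{i,k}(t_i) = k t_i + v t_{s+1}, \qquad \tilde\mu_{i,k}(t_{s+1}) = -d_i t_i + u t_{s+1},
\]
with $\tilde\mu_{i,k}(t_\ell) = t_\ell$ for $\ell \notin \{i, s+1\}$. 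This has determinant $uk + v d_i = 1$, preserves $\Ker(f)$ by a direct check using $d_i t_i, t_{s+1} \in \Ker(f)$, and induces $\mu_{i,k}$ on $A$ because $f(t_{s+1}) = 0$.

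The main obstacle is verifying that (i)--(iii) generate $\Aut(A)$ for a general finite abelian group $A$: the divisibility structure makes the row-reduction argument more delicate than for matrices over a field, and extra care is required to ensure that each elementary operation used respects the constraints imposed by the invariant factor pattern. Once this is in place, however, the lifts assembled above combine to produce the desired $\tilde\psi \in \Aut(T)$ with $\psi \circ f = f \circ \tilde\psi$, establishing $\Aut_{(\mathcal{T},f)}(A) = \Aut(A)$.
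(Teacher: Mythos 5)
Your approach is genuinely different from the paper's. The paper invokes a result of Diaconis--Graham \cite[(2.2)]{diaconis-graham:1999:the-graph-of-generating-sets} on generating tuples of finite abelian groups to produce $\tau \in \Aut(T)$ and $\gamma \in \Aut(A)$ with $\psi \circ f = \gamma \circ f \circ \tau$, where $\gamma = \ID_{A_1}\oplus\cdots\oplus\ID_{A_{s-1}}\oplus\psi_k$. The relation forces $\tau$ to stabilise $\Ker(f)$, so $\gamma^{-1}\circ\psi$ is tame with lift $\tau$; the residual $\gamma$ is then lifted by the B\'ezout trick of \cref{exmp:cyclic-group}, which is exactly where $\rk(\mathcal{T}) \geqslant s+1$ is used. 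You instead try to decompose $\psi$ itself into elementary automorphisms of $A$ and lift each piece. Your lifts are all correct: the shears and transpositions lift for free, and your B\'ezout lift of the scaling $\mu_{i,k}$ coincides with the construction of \cref{exmp:cyclic-group}, with the spare vector $t_{s+1}$ playing the role of the second basis vector there. Since lifts compose and $\Aut_{(\mathcal{T},f)}(A)$ is a subgroup, your logic is sound \emph{provided} the elementary automorphisms generate $\Aut(A)$. Incidentally, your route would make the role of the hypothesis $\rk(\mathcal{T}) \geqslant s+1$ more transparent: it is needed precisely and only to lift the scalings.

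That generation claim is, however, a genuine gap. You assert, but do not prove, that scalings, shears $e_i \mapsto e_i + ce_j$ (with $d_j \mid cd_i$), and transpositions of factors with equal invariant factor generate $\Aut(A)$ for an arbitrary finite abelian group $A = \bigoplus_{i=1}^s \mathbb{Z}/d_i\mathbb{Z}$. You rightly flag this as ``the main obstacle''. It is not a routine row-reduction over a field: when $A$ is not homocyclic, the matrix $(m_{ij})$ of an automorphism carries divisibility constraints (for $i<j$ one needs $d_i/d_j \mid m_{ij}$), the ring $\End(A)$ is not a matrix ring, and one must check both that each clearing operation used is an admissible shear and that invertibility of $\psi$ (detected on the Frattini quotient $A/\mathrm{rad}(A)$) guarantees a unit pivot is always available. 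The fact is true and can be proved by induction on $s$, but it is not a one-liner, and as written your proof defers exactly the step that carries the combinatorial content. The paper avoids this by outsourcing the combinatorics to Diaconis--Graham, whose hypothesis ($n$ greater than the number of invariant factors) is precisely $\rk(\mathcal{T}) \geqslant s+1$. To make your argument complete you would either need to prove the elementary-generation fact, or cite it; absent that, the proof is an outline rather than a proof.
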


\begin{proof}
Choose a basis $(t_1,\dots,t_n)$ of $T$ adapted to $f$. For $1 \leqslant i \leqslant s$ we denote by $A_i \leqslant A$ the cyclic submodule $\langle f(t_i) \rangle \cong \mathbb{Z}/d_i\mathbb{Z}$ so that $A = A_1\oplus\cdots \oplus A_s$. Now, assume $\psi \in \Aut(A)$ then $\{\psi(f(t_1)),\dots,\psi(f(t_s))\}$ is another generating set of $A$. By \cite[(2.2)]{diaconis-graham:1999:the-graph-of-generating-sets} there exist automorphisms $\tau \in \Aut(T)$ and $\gamma \in \Aut(A)$ such that $\psi \circ f = \gamma \circ f \circ \tau$ and $\gamma = \ID_{A_1}\oplus\cdots\oplus \ID_{A_{s-1}}\oplus \psi_k$. Here we use the notation of \cref{exmp:cyclic-group} so that $1 \leqslant k < d_s$ is an integer with $\gcd(k,d_s) = 1$. The construction of \cref{exmp:cyclic-group} clearly shows that $\gamma$ is tame with respect to $(\mathcal{T},f)$. Hence, $\psi \in \Aut_{(\mathcal{T},f)}(A)$ as desired.
\end{proof}

\begin{rem}\label{rem:auto-lift}
The proof of \cref{prop:aut-ab-lift}, together with \cref{exmp:cyclic-group}, shows that if $\Aut(\mathbb{Z}/d_s\mathbb{Z}) = \{\psi_1,\psi_{d_s-1}\}$ then we have $\Aut_{(\mathcal{T},f)}(A) = \Aut(A)$ even when $\rk(\mathcal{T}) = s$. Note that this condition on $\Aut(\mathbb{Z}/d_s\mathbb{Z})$ holds if and only if $d_s \in \{2,3,4,6\}$.
\end{rem}

\section{Smooth Regular Embeddings}\label{sec:smooth-reg-embed}
\begin{pa}
In \cite[\S1.21]{deligne-lusztig:1976:representations-of-reductive-groups} Deligne--Lusztig gave a construction which showed that regular embeddings always exist. We begin this section by showing that smooth regular embeddings always exist. Our approach is to mimic the construction of Deligne--Lusztig at the level of root data. We note that in the absence of Frobenius/Steinberg endomorphisms this exact construction has been given before by Martin in \cite[Theorem 4.5]{martin:1999:etale-slices-for-representation-varieties}. We start by defining the analogues of the notions of regular embedding and smooth regular embedding at the level of root data.
\end{pa}

\begin{definition}
Let $\mathcal{R}$ and $\mathcal{R}'$ be root data as in \cref{pa:p-morphism} then a derived embedding $f : \mathcal{R}' \to \mathcal{R}$, c.f., \cref{def:der-embed-rd}, is called a \emph{$p$-regular embedding} if $X'/\mathbb{Z}\Phi'$ has no $p'$-torsion and a \emph{smooth regular embedding} if $X'/\mathbb{Z}\Phi'$ has no torsion.
\end{definition}

\begin{lem}\label{lem:lift-frob}
Assume $\bG$ is endowed with a Steinberg/Frobenius endomorphism $F : \bG \to \bG$ and let $\bT\leqslant \bG$ be an $F$-stable maximal torus. Let $\pi : (\bG,\bT) \to (\bG',\bT')$ be an isotypy such that there exists a $p$-Steinberg/$p$-Frobenius endomorphism $\phi' : \mathcal{R}(\bG',\bT') \to \mathcal{R}(\bG',\bT')$ satisfying $\mathcal{R}(\pi\circ F) = \mathcal{R}(F)\circ\mathcal{R}(\pi) = \mathcal{R}(\pi)\circ\phi'$. Then there exists a Steinberg/Frobenius endomorphism $F' : (\bG',\bT') \to (\bG',\bT')$ satisfying $\pi\circ F = F'\circ\pi$ and $\mathcal{R}(F') = \phi'$. Similarly, if $\mathcal{R}(F\circ\pi) = \mathcal{R}(\pi)\circ\mathcal{R}(F) = \phi'\circ\mathcal{R}(\pi)$ then there exists a Steinberg/Frobenius endomorphism $F' : (\bG',\bT') \to (\bG',\bT')$ satisfying $F\circ\pi = \pi\circ F'$ and $\mathcal{R}(F') = \phi'$.
\end{lem}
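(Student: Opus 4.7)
The plan is to first lift the $p$-Steinberg/$p$-Frobenius endomorphism $\phi'$ to an endomorphism of $\bG'$ via the isogeny theorem, and then adjust the lift by an inner automorphism of $\bG'$ by a torus element so that the required compatibility diagram with $\pi$ and $F$ commutes. The key enabling fact is that inner automorphism by an element of $\bT'$ is invisible at the root datum level, so the adjustment will not disturb the root datum morphism.

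The first step is to apply the existence part of \cref{thm:full-functor} to $\phi'$ to produce an isotypy $F_0' : (\bG',\bT') \to (\bG',\bT')$ with $\mathcal{R}(F_0') = \phi'$. A $p$-Steinberg/$p$-Frobenius endomorphism is in particular a $p$-isogeny, so $F_0'$ is an isogeny and \cref{prop:characterisation-of-Steinberg-Frobenius} ensures that $F_0'$ itself is a Steinberg/Frobenius endomorphism. Next, observe that both $F_0' \circ \pi$ and $\pi \circ F$ are isotypies $(\bG,\bT) \to (\bG',\bT')$, and by the hypothesis together with the contravariant functoriality described in \cref{pa:isog-to-isog-rd},
\begin{equation*}
\mathcal{R}(F_0' \circ \pi) = \mathcal{R}(\pi) \circ \phi' = \mathcal{R}(F) \circ \mathcal{R}(\pi) = \mathcal{R}(\pi \circ F).
\end{equation*}
By the uniqueness clause of \cref{thm:full-functor}, there exists $t \in \bT$ such that $F_0' \circ \pi = (\pi \circ F) \circ \Inn t$.

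The main step is now to define $F' := F_0' \circ \Inn(\pi(t)^{-1})$, using that $\pi(t) \in \pi(\bT) \leqslant \bT'$. Three checks must be made. First, since $\bT'$ is abelian, $\Inn(\pi(t)^{-1})$ fixes $\bT'$ pointwise and acts trivially on $X(\bT')$; consequently the $p$-morphism data is unchanged and $\mathcal{R}(F') = \mathcal{R}(F_0') = \phi'$. Second, $F'$ is the composition of the isogeny $F_0'$ with the automorphism $\Inn(\pi(t)^{-1})$, hence is again an isogeny, and so by \cref{prop:characterisation-of-Steinberg-Frobenius} is a Steinberg/Frobenius endomorphism. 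Third, commutativity: since $\pi$ is a group homomorphism, $\Inn(\pi(s)) \circ \pi = \pi \circ \Inn(s)$ for every $s \in \bG$, which gives
\begin{equation*}
F' \circ \pi = F_0' \circ \Inn(\pi(t)^{-1}) \circ \pi = F_0' \circ \pi \circ \Inn(t^{-1}) = \pi \circ F \circ \Inn(t) \circ \Inn(t^{-1}) = \pi \circ F.
\end{equation*}

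For the second statement one runs exactly the same argument with the order of composition reversed: lift $\phi'$ to some $F_0'$, observe that $F \circ \pi$ and $\pi \circ F_0'$ induce equal $p$-morphisms on root data by the given hypothesis, apply the uniqueness clause of \cref{thm:full-functor} to obtain a correcting element in $\bT$, and absorb the twist into an inner automorphism of $\bG'$ by a torus element coming from $\pi$. The main (and essentially only) obstacle I anticipate is the bookkeeping of the twist $t \in \bT$ produced by the uniqueness clause: it lives in the domain torus, but the adjustment must be carried out on the codomain, and this is precisely where one uses that $\pi$ is a homomorphism with $\pi(\bT) \leqslant \bT'$ together with the invisibility of $\Inn(\pi(t)^{-1})$ at the root datum level.
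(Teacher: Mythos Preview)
Your proof is correct and is essentially identical to the paper's own argument: lift $\phi'$ to some $F_0'$ via \cref{thm:full-functor} and \cref{prop:characterisation-of-Steinberg-Frobenius}, use the uniqueness clause of \cref{thm:full-functor} to produce a correcting element $t\in\bT$, and then absorb the twist into an inner automorphism by $\pi(t)^{\pm 1}\in\bT'$. The only difference is cosmetic (which of the two isotypies you call $\phi$ versus $\phi'$ when invoking uniqueness, and hence the sign of the exponent on $\pi(t)$).
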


\begin{proof}
By \cref{thm:full-functor,prop:characterisation-of-Steinberg-Frobenius} there exists a Steinberg/Frobenius endomorphisms $F' : (\bG',\bT') \to (\bG',\bT')$ such that $\mathcal{R}(F') = \phi'$. As $\mathcal{R}(\pi\circ F) = \mathcal{R}(F'\circ\pi)$ we have again by \cref{thm:full-functor} that there exists an element $t \in \bT$ such that $\pi\circ F = F'\circ\pi \circ \Inn t$. This implies $\pi\circ F = (F'\circ\Inn \pi(t)) \circ \pi$ so simply replacing $F'$ by $F'\circ\Inn \pi(t)$ we get the statement. The second statement is proved identically.
\end{proof}

\begin{prop}[{}{see \cite[1.7.8]{geck-malle:2016:reductive-groups-and-steinberg-maps}}]\label{prop:der-embeds-root-data}\label{pa:der-embed-induces-der-embed}
Let $\pi : \bG \to \bG'$ be an isotypy and assume $\bG$ is not endowed with a Steinberg/Frobenius endomorphism then $\pi$ is a derived embedding if and only if $\mathcal{R}(\pi) : \mathcal{R}(\bG',\bT') \to \mathcal{R}(\bG,\bT)$ is a derived embedding for some (any) maximal tori $\bT \leqslant \bG$ and $\bT' \leqslant \bG'$.
\end{prop}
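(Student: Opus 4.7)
The plan is to reduce the equivalence to a corresponding statement about maximal tori, which is then handled by the standard duality between tori and their character lattices.

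First, I would observe that the condition $\iota(\bG_{\der}) = \widetilde{\bG}_{\der}$ appearing in the definition of a derived embedding of algebraic groups is automatic for isotypies: as recorded just after the definition of isotypy in \cref{pa:isogeny}, any isotypy restricts to an isogeny $\bG_{\der} \to \bG'_{\der}$ between derived subgroups, so in particular $\pi(\bG_{\der}) = \bG'_{\der}$. Consequently, for an isotypy $\pi$, being a derived embedding is equivalent to being a closed embedding of algebraic groups.

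Next, I would reduce this closed-embedding condition to the torus. Because $\pi$ is an isotypy, $\Ker(\pi) \leqslant Z(\bG)$, and since the centre of a connected reductive group lies inside every maximal torus we have $\Ker(\pi) \leqslant \bT$. Scheme-theoretically this gives $\Ker(\pi) = \Ker(\pi|_{\bT})$, so $\pi$ is a closed embedding if and only if $\pi|_{\bT} : \bT \to \bT'$ is.

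Finally, I would apply the contravariant equivalence $\bT \mapsto X(\bT)$ between tori over $\mathbb{K}$ and finitely generated free $\mathbb{Z}$-modules, under which closed embeddings of tori correspond exactly to surjections of character lattices. Applied to $\pi|_{\bT}$ this yields: $\pi|_{\bT}$ is a closed embedding if and only if the dual map $\pi^{*} : X(\bT') \to X(\bT)$ is surjective, i.e., if and only if $\mathcal{R}(\pi)$ is a derived embedding of root data in the sense of \cref{def:der-embed-rd}. Independence of the choice of maximal tori is immediate, since any two such choices are conjugate in $\bG$, respectively $\bG'$, and conjugation induces isomorphisms of root data compatible with $\mathcal{R}(\pi)$.

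The main subtlety I would flag is that the last equivalence is genuinely scheme-theoretic and can fail at the level of geometric points in positive characteristic: the $p$-th power map $\mathbb{G}_m \to \mathbb{G}_m$ is bijective on points but has non-trivial scheme-theoretic kernel $\mu_p$, and its effect on character groups is multiplication by $p$, which is injective but not surjective. The hypothesis that $\bG$ carries no Steinberg/Frobenius endomorphism is what allows us to bypass this phenomenon when it matters, and is precisely the reason the statement is restricted to this setting.
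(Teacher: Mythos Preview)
The paper does not supply its own proof of this proposition; it simply records the statement with a reference to \cite[1.7.8]{geck-malle:2016:reductive-groups-and-steinberg-maps}. Your argument is correct and follows the natural route: use the observation in \cref{pa:isogeny} that an isotypy restricts to an isogeny on derived subgroups, reduce the closed-embedding condition to the maximal torus via $\Ker(\pi) \leqslant Z(\bG) \leqslant \bT$, and then invoke the anti-equivalence between diagonalisable groups and character lattices.

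Your final paragraph, however, misreads the role of the hypothesis. The scheme-theoretic subtlety you flag is real and you handle it correctly by working with scheme-theoretic kernels throughout; but that subtlety is present regardless of whether a Steinberg endomorphism is in play, and your argument already accounts for it. The assumption that $\bG$ carries no Steinberg/Frobenius endomorphism is there for a different reason. When $\bG$ \emph{is} so endowed, both the group-theoretic notion of derived embedding and the root-datum notion in \cref{def:der-embed-rd} acquire an extra compatibility clause: one must produce a Steinberg/Frobenius endomorphism on the target (respectively, a $p$-Steinberg/$p$-Frobenius endomorphism on $\mathcal{R}_2$) intertwining with the embedding. Matching these extra conditions across the equivalence is not automatic and is precisely what \cref{lem:lift-frob} is designed to do; the proposition sidesteps this by restricting to the unadorned case.
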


\begin{lem}[{}{see \cite[Theorem 4.5]{martin:1999:etale-slices-for-representation-varieties}}]\label{lem:smooth-reg-embed}
There exists a smooth regular embedding $\pi : \bG \to \bG'$.
\end{lem}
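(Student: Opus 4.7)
My plan is to build the smooth regular embedding at the level of root data using the central product construction of \cref{sec:central-products} and then transport it to algebraic groups via \cref{thm:full-functor}, handling Steinberg/Frobenius compatibility at the end with \cref{lem:lift-frob}. Fix a maximal torus $\bT \leqslant \bG$, which I take to be $F$-stable if $F : \bG \to \bG$ is a Steinberg/Frobenius endomorphism, and write $\mathcal{R}(\bG,\bT) = (X,\Phi,\widecheck{X},\widecheck{\Phi})$. Put $A := X/\mathbb{Z}\Phi$, let $h : X \to A$ be the canonical projection, and let $\mathcal{Y}$ denote the torus $(X,\emptyset,\widecheck{X},\emptyset)$ whose underlying character module is a second copy of $X$. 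I then form the central product
\[
\widetilde{\mathcal{R}} := \mathcal{R}\oplus_{(A,h,h)}\mathcal{Y},
\]
with first projection $p_1 : \widetilde{\mathcal{R}} \to \mathcal{R}$.

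By \cref{lem:fibre-prod}, $p_1$ is a surjective homomorphism of root data, and by construction its restriction to roots is the bijection $(\alpha,0) \leftrightarrow \alpha$, so $p_1$ is a derived embedding of root data in the sense of \cref{def:der-embed-rd}. Moreover $\widetilde{X}/\mathbb{Z}\widetilde{\Phi}$ is torsion-free: projection onto the second factor yields a surjection $\widetilde{X}/\mathbb{Z}\widetilde{\Phi} \to X$ whose kernel consists of classes of $(x,0)$ with $h(x) = 0$, hence $x \in \mathbb{Z}\Phi$, so each such class is trivial and the map is an isomorphism onto the free module $X$. Applying \cref{thm:full-functor} realises $\widetilde{\mathcal{R}}$ as $\mathcal{R}(\widetilde{\bG},\widetilde{\bT})$ for a pair $(\widetilde{\bG},\widetilde{\bT})$ and produces an isotypy $\pi : (\bG,\bT) \to (\widetilde{\bG},\widetilde{\bT})$ with $\mathcal{R}(\pi) = p_1$; \cref{prop:der-embeds-root-data} then promotes $\pi$ to a derived embedding, and torsion-freeness of $\widetilde{X}/\mathbb{Z}\widetilde{\Phi}$ is precisely the condition that $Z(\widetilde{\bG})$ is connected and smooth. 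In the absence of a Steinberg endomorphism this already completes the argument.

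The main difficulty I expect is compatibility with $F$. The key point is that with the above choice of $\mathcal{Y}$, the formula $\widetilde{\phi}(x,y) := (\phi(x),\phi(y))$ for $\phi := \mathcal{R}(F)$ descends to an endomorphism of $\widetilde{X}$: since $\phi$ preserves $\mathbb{Z}\Phi$ it induces an automorphism $\bar\phi$ of $A$, and the defining condition $h(x) = h(y)$ for membership in $\widetilde{X}$ is preserved. One then checks directly that $\widetilde{\phi}$ stabilises $\widetilde{\Phi}$, satisfies $p_1 \circ \widetilde{\phi} = \phi \circ p_1$, and is itself a $p$-Steinberg/$p$-Frobenius endomorphism of $\widetilde{\mathcal{R}}$ because $\phi$ is one and $\widetilde{\phi}$ is the restriction to $\widetilde{X}$ of $\phi \oplus \phi$. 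An application of \cref{lem:lift-frob} then produces a Steinberg/Frobenius endomorphism $\widetilde{F} : \widetilde{\bG} \to \widetilde{\bG}$ with $\pi \circ F = \widetilde{F} \circ \pi$, finishing the proof.
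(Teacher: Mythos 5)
Your proof is correct and follows essentially the same route as the paper: you form the central product $\mathcal{R}\oplus_{(A,h,h)}\mathcal{R}^{\circ}$ (your $\mathcal{Y}$ is precisely $\mathcal{R}^{\circ}$), observe torsion-freeness of $\widetilde{X}/\mathbb{Z}\widetilde{\Phi}$ via the second projection exactly as \cref{lem:fibre-prod} does, realise the picture with \cref{thm:full-functor} and \cref{prop:der-embeds-root-data}, and handle Steinberg/Frobenius endomorphisms by noting the diagonal map $\widetilde{\phi}(x,y) = (\phi(x),\phi(y))$ descends to the fibre product and invoking \cref{lem:lift-frob}. The paper's proof of \cref{lem:smooth-reg-embed} is word-for-word this argument.
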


\begin{proof}
Let $\bT \leqslant \bG$ be a maximal torus and let $\mathcal{R} = \mathcal{R}(\bG,\bT) = (X,\Phi,\widecheck{X},\widecheck{\Phi})$ be the root datum of $(\bG,\bT)$. Set $A = X/\mathbb{Z}\Phi$ and let $f : X \to X/\mathbb{Z}\Phi$ be the canonical projection map then we may form the central product $\mathcal{R}' = \mathcal{R} \oplus_{(A,f,f)} \mathcal{R}^{\circ} = (X',\Phi',\widecheck{X}',\widecheck{\Phi}')$. Now let $p_1 : \mathcal{R}' \to \mathcal{R}$ be the projection onto the first factor then by \cref{lem:fibre-prod} this is a surjective homomorphism of root data and we have $\Tor(X'/\mathbb{Z}\Phi') = \{0\}$. By \cref{thm:full-functor} there exists a pair $(\bG',\bT')$ such that $\mathcal{R}(\bG',\bT') = \mathcal{R}'$ and an isotypy $\pi : (\bG,\bT) \to (\bG',\bT')$ such that $\mathcal{R}(\pi) = p_1$. It follows from \cref{prop:der-embeds-root-data} that $\pi$ is a smooth regular embedding in the absence of Steinberg endomorphisms.

Now let us assume that $\bG$ is equipped with a Steinberg/Frobenius endomorphism $F : \bG \to \bG$ then we may assume that $\bT$ is chosen to be $F$-stable. We denote by $\phi' : X' \to X'$ the homomorphism defined by $\phi'(x,y) = (F^*(x),F^*(y))$, which is well defined as $F^*(\mathbb{Z}\Phi) \subseteq \mathbb{Z}\Phi$. It is clear that $\phi'$ induces a $p$-Steinberg/$p$-Frobenius endomorphism of $\mathcal{R}(\bG',\bT')$ such that $\mathcal{R}(F)\circ \mathcal{R}(\pi) = \mathcal{R}(\pi)\circ\phi'$. It follows from \cref{lem:lift-frob} that there exists a Steinberg/Frobenius endomorphism $F' : (\bG',\bT') \to (\bG',\bT')$ such that $\mathcal{R}(F') = \phi'$ and $F\circ\pi = \pi\circ F'$ so $\pi$ is a smooth regular embedding.
\end{proof}

\begin{exmp}\label{exmp:Sp4}
Assume now that $\bG = \Sp_4(\mathbb{K})$ and $\mathcal{R}(\bG,\bT) = (X,\Phi,\widecheck{X},\widecheck{\Phi})$ is the root datum of $\bG$ with respect to some maximal torus $\bT \leqslant \bG$. If $\Delta = \{\alpha_1,\alpha_2\}$ is a set of simple roots, with $\alpha_2$ the long root, then $X = \mathbb{Z}\omega_1\oplus\mathbb{Z}\omega_2$ and $\widecheck{X} = \mathbb{Z}\widecheck{\alpha}_1\oplus\mathbb{Z}\widecheck{\alpha}_2$ with $2\omega_1 = 2\alpha_1+\alpha_2$ and $\omega_2 = \alpha_1+\alpha_2$. Now let $\mathcal{R}' = (X',\Phi',\widecheck{X}',\widecheck{\Phi}')$ be the root datum constructed in the proof of \cref{lem:smooth-reg-embed} then
\begin{equation*}
X' = \{(a\omega_1+b\omega_2,c\omega_1+d\omega_2) \in X\oplus X \mid a-c \in 2\mathbb{Z}\} = \mathbb{Z}e_1 \oplus \mathbb{Z}e_2 \oplus \mathbb{Z}e_3 \oplus \mathbb{Z}e_4,
\end{equation*}
where $e_1 = (\omega_1,\omega_1)$, $e_2 = (\omega_2-\omega_1,\omega_1)$, $e_3 = (0,2\omega_1)$, and $e_4 = (0,\omega_2)$. If $(\widecheck{e}_1,\widecheck{e}_2,\widecheck{e}_3,\widecheck{e}_4)$ is the dual basis then the corresponding simple roots and coroots of $\mathcal{R}'$ are given by $\alpha_1' = e_1 - e_2$, $\alpha_2' = 2e_2 - e_3$, and $\widecheck{\alpha}_1' = \widecheck{e}_1 - \widecheck{e}_2$, $\widecheck{\alpha}_2' = \widecheck{e}_2$ respectively. From the calculations in \cite[Chapter 1]{luebeck:1993:thesis} we see that $\bG' \cong \CSp_4(\mathbb{K}) \times \mathbb{G}_m$.
\end{exmp}

\begin{pa}\label{pa:optimal-sit}
Although the construction of a smooth regular embedding in \cref{lem:smooth-reg-embed} works in all cases it has the downside that it does not attempt to minimise the dimension of $Z(\bG')$. In fact the construction gives a group whose centre has dimension the rank of $\bG$, which begs the question: How small can $\dim(Z(\bG'))$ be? This is particularly relevant when studying characters of finite reductive groups. Now, assume $\bG$ is semisimple and $\pi : (\bG,\bT) \to (\bG',\bT')$ is a smooth regular embedding. Let $\mathcal{R}(\bG,\bT) = (X,\Phi,\widecheck{X},\widecheck{\Phi})$ and $\mathcal{R}(\bG',\bT') = (X',\Phi',\widecheck{X}',\widecheck{\Phi}')$ then as $\bG$ is semisimple we have $X/\mathbb{Z}\Phi$ is finite; we assume it has $s\geqslant 0$ invariant factors.
\end{pa}

\begin{pa}
Let $f = \mathcal{R}(\pi) : \mathcal{R}(\bG',\bT') \to \mathcal{R}(\bG,\bT)$ be the corresponding smooth regular embedding of root data. As $f : X' \to X$ is surjective and $\Phi' \subseteq \Ker(f)$ we must have the induced map $f : X'/\mathbb{Z}\Phi' \to X/\mathbb{Z}\Phi$ is surjective. The quotient $X'/\mathbb{Z}\Phi'$ is a free module and we must have $\rk(X'/\mathbb{Z}\Phi') \geqslant s$ by \cref{lem:adapted-basis}. As the rank of $X'/\mathbb{Z}\Phi' \cong X(Z(\bG'))$ is the same as the dimension of $Z(\bG')$ we see that $\dim(Z(\bG')) \geqslant s$. It is an easy exercise with central products to show that this lower bound is sharp in the absence of Steinberg or Frobenius endomorphisms. The following shows that this bound is sharp if $\bG$ is simple and endowed with a Frobenius endomorphism (see \cref{lem:no-smooth-embedding-suzuki} for the case of Steinberg endomorphisms).
\end{pa}

\begin{prop}\label{prop:optimal-embed}
Assume $\bG$ is simple and $F : \bG \to \bG$ is a Frobenius endomorphism. Let $\mathcal{R} = \mathcal{R}(\bG,\bT) = (X,\Phi,\widecheck{X},\widecheck{\Phi})$ be the root datum of $\bG$ with respect to an $F$-stable maximal torus $\bT \leqslant \bG$. If $A := X/\mathbb{Z}\Phi$ has $s \geqslant 0$ invariant factors then there exists a smooth regular embedding $\pi : \bG \to \bG'$ such that $\dim(Z(\bG')) = s$.
\end{prop}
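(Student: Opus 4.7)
The plan is to refine the construction of \cref{lem:smooth-reg-embed} by using a torus of rank exactly $s$ and lifting the Frobenius with care. If $s = 0$, then $A = \{0\}$, whence $X = \mathbb{Z}\Phi$ and $Z(\bG)$ is already trivial, so we take $\bG' = \bG$. Assume henceforth $s \geqslant 1$ and let $d_s \mid \cdots \mid d_1$ be the invariant factors of $A$, so $A \cong \bigoplus_{i=1}^{s} \mathbb{Z}/d_i\mathbb{Z}$. Let $\mathcal{T} = (T,\emptyset,\widecheck{T},\emptyset)$ with $T = \mathbb{Z}^s$ and basis $(t_1,\dots,t_s)$, define $f : T \to A$ by sending $t_i$ to a generator of the $i$-th summand, and let $h : X \to A$ be the canonical projection. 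Form $\mathcal{R}' = \mathcal{R} \oplus_{(A,h,f)} \mathcal{T} = (X',\Phi',\widecheck{X}',\widecheck{\Phi}')$. By \cref{lem:fibre-prod} the projection $p_1 : \mathcal{R}' \to \mathcal{R}$ is a surjective homomorphism of root data, and $p_2 : X' \to T$ has kernel $\{(x,0) \mid x \in \Ker(h)\} = \mathbb{Z}\Phi'$ because $\Ker(h) = \mathbb{Z}\Phi$. Thus $X'/\mathbb{Z}\Phi' \cong T$ is free of rank $s$, and $p_1$ realises a smooth regular embedding of root data with $\dim(Z(\bG')) = s$.

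To handle the Frobenius, \cref{prop:characterisation-of-Steinberg-Frobenius} writes $F^* = q\tau$ with $q$ a power of $p$ and $\tau \in \Aut(\mathcal{R})$ of finite order, so the induced endomorphism on $A$ is $\overline{F^*} = q\overline{\tau}$. The key step is to produce $\sigma \in \Aut(\mathcal{T})$ of finite order with $f \circ \sigma = \overline{\tau} \circ f$. Granted this, setting $\phi'(x,t) = (F^*(x), q\sigma(t))$ yields a map that preserves $X'$ (since $h(F^*(x)) = q\overline{\tau}(h(x)) = q\overline{\tau}(f(t)) = f(q\sigma(t))$ whenever $(x,t) \in X'$) and defines a $p$-Frobenius of $\mathcal{R}'$ compatible with $p_1$ in the sense that $\mathcal{R}(p_1) \circ \phi' = F^* \circ \mathcal{R}(p_1)$. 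Applying \cref{lem:lift-frob} then produces a Frobenius endomorphism $F'$ on $\bG'$ commuting with $\pi$, completing the construction.

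The main obstacle is the existence of such a finite-order lift $\sigma$, and this is where simplicity of $\bG$ enters. Since $\Phi$ is irreducible, $A$ is a quotient of the fundamental group of an irreducible root system, hence isomorphic either to a cyclic group $\mathbb{Z}/d\mathbb{Z}$ or to $(\mathbb{Z}/2\mathbb{Z})^2$ (the latter only for type $D_n$ with $n$ even); in particular $s \leqslant 2$. In the cyclic case ($s = 1$), direct inspection of the simple types shows that $\overline{\tau}$ is always $\pm \id$: the non-trivial diagram involutions on $A_n$, $D_n$ (with $n$ odd), and $E_6$ induce $-1$ on the fundamental group, while the remaining simple types either admit no non-trivial diagram automorphism or force $A = \mathbb{Z}/2\mathbb{Z}$ with $\Aut(A)$ trivial. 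Accordingly $\sigma = \pm \id \in \GL_1(\mathbb{Z})$ (matching $\overline{\tau}$) provides a finite-order lift, in agreement with \cref{exmp:cyclic-group}. When $A = (\mathbb{Z}/2\mathbb{Z})^2$ and $s = 2$, we have $\Aut(A) \cong S_3$, whose elements have orders $1$, $2$, or $3$: a non-trivial involution is conjugate in $\Aut(A)$ to a coordinate swap and lifts to the order-$2$ swap in $\GL_2(\mathbb{Z})$, while the order-$3$ triality arising in type ${}^3D_4$ lifts to $\bigl(\begin{smallmatrix} 0 & 1 \\ -1 & -1 \end{smallmatrix}\bigr) \in \GL_2(\mathbb{Z})$, which has order $3$ and reduces modulo $2$ to the order-$3$ element of $\GL_2(\mathbb{F}_2)$. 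Hence a suitable $\sigma$ exists in every case.
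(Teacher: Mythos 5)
Your proof is correct and takes essentially the same approach as the paper: both form the central product $\mathcal{R}\oplus_{(A,h,f)}\mathcal{T}$ with $\mathcal{T}$ of rank $s$ and then lift the Frobenius by exhibiting a finite-order automorphism $\sigma$ of $T$ that covers the induced action of $\tau$ on $A$. If anything you are slightly more careful than the paper at the one delicate point, since you exhibit explicit finite-order lifts case by case, whereas the paper invokes tameness (\cref{rem:auto-lift}, \cref{exmp:cyclic-group}) and then asserts that the resulting $\tilde\tau$ has finite order without further comment.
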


\begin{proof}
If $\bG$ is adjoint there is nothing to show so we can assume $s > 0$. As $F$ is a Frobenius endomorphism we have $\mathcal{R}(F) = q\tau : \mathcal{R} \to \mathcal{R}$ with $q$ an integral $p$-power and $\tau : X \to X$ a finite order automorphism. Clearly $\tau$ induces an automorphism $\tau : A \to A$. We assume $\mathcal{T} = (T,\emptyset,\widecheck{T},\emptyset)$ is a torus with $\rk(\mathcal{T}) = s$ and $f : T \to A$ is a surjective homomorphism. We claim that $\tau \in \Aut_{(\mathcal{T},f)}(A)$ is tame with respect to $(\mathcal{T},f)$. By the classification of simple algebraic groups the invariant factors of $A$ are given by $(n)$ or $(2,2)$ where $n \geqslant 1$ is an integer. If the invariant factors are $(2,2)$ or $(n)$ with $n \leqslant 3$ then the statement follows from \cref{rem:auto-lift}. If $A$ has invariant factors $(n)$ with $n > 3$ then $\bG$ is of type $\A_{n-1}$ and we have $\tau \in \{\psi_1,\psi_{n-1}\}$, so the statement follows from \cref{exmp:cyclic-group}.

Let $\tilde{\tau} \in \Aut(T)$ be a lift of $\tau$ so that $\tau\circ f = f\circ\tilde{\tau}$. We now define an automorphism $\psi : X \oplus T \to X\oplus T$ by setting $\psi(x,y) = (\tau(x),\tilde{\tau}(y))$. As the order of $\tau$ must divide the order of $\tilde{\tau}$ it's clear that $\psi$ has finite order so $F_1^* = q\psi$ is a $p$-Frobenius endomorphism of $\mathcal{R} \oplus \mathcal{T}$. This clearly restricts to a $p$-Frobenius endomorphism of the central product $\mathcal{R} \oplus_{(A,h,f)} \mathcal{T}$, where $h : X \to A$ is the natural projection map, satisfying $p_1\circ F_1^* = F^*\circ p_1$. Thus there exists the desired smooth regular embedding $\pi : \bG \to \bG'$ by \cref{lem:fibre-prod,lem:lift-frob,prop:der-embeds-root-data}.
\end{proof}

\begin{pa}
Note that the conclusion of \cref{prop:optimal-embed} no longer holds if we replace the assumption that $\bG$ is simple by the assumption that $\bG$ is semisimple. Indeed, assume $\bG = \SL_3(\mathbb{K}) \times \SL_5(\mathbb{K})$ then $A := X/\mathbb{Z}\Phi \cong \mathbb{Z}/3\mathbb{Z} \oplus \mathbb{Z}/5\mathbb{Z}$ so $A$ has invariant factors $(15)$. Assume $F : \bG \to \bG$ is a Frobenius endomorphism restricting to a split Frobenius endomorphism on $\SL_3(\mathbb{K})$ and a twisted Frobenius endomorphism on $\SL_5(\mathbb{K})$. Then $F^* = q\tau : X \to X$ is such that the automorphism $\tau$ induces the automorphism $\psi_4$ on $A \cong \mathbb{Z}/15\mathbb{Z}$, with the notation as in \cref{exmp:cyclic-group}. If $\mathcal{T} = (T,\emptyset,\widecheck{T},\emptyset)$ is a $1$-dimensional torus with a surjective homomorphism $f : T \to A$ then by \cref{exmp:cyclic-group} we see that $\psi_4$ is not tame with respect to $(\mathcal{T},f)$. One easily sees that this implies that there is no smooth regular embedding $\pi : \bG \to \bG'$ with $\dim(Z(\bG')) = 1$. The following shows that the conclusion of \cref{prop:optimal-embed} also no longer holds if we replace the assumption that $F$ is a Frobenius endomorphism with the assumption that $F$ is a Steinberg endomorphism.
\end{pa}

\begin{lem}\label{lem:no-smooth-embedding-suzuki}
Assume $p=2$ and $\bG$ is simple and simply connected of type $\C_2$. We assume $F : \bG \to \bG$ is a Steinberg endomorphism such that $\bG^F$ is of type ${}^2\C_2$. Then there is no smooth regular embedding $\bG \to \bG'$ such that $\dim(Z(\bG')) = 1$.
\end{lem}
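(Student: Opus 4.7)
The plan is to reduce the lemma to a concrete computation at the level of root data, exploiting the classification result of \cref{thm:smooth-reg-embed-classification}. With $\bT \leqslant \bG$ an $F$-stable maximal torus and $\mathcal{R} = \mathcal{R}(\bG,\bT) = (X,\Phi,\widecheck{X},\widecheck{\Phi})$ as in \cref{exmp:Sp4}, we have $A := X/\mathbb{Z}\Phi \cong \mathbb{Z}/2\mathbb{Z}$, so $A$ has a single invariant factor. I first observe that any smooth regular embedding $\pi : \bG \to \bG'$ with $\dim Z(\bG') = 1$ yields, at the level of root data, an isomorphism $\mathcal{R}' := \mathcal{R}(\bG',\bT') \cong \mathcal{R} \oplus_{(A,h,f)} \mathcal{T}$ for some rank-one torus $\mathcal{T} = (T,\emptyset,\widecheck{T},\emptyset)$ and surjection $f : T \to A$. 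Since $\Aut(A)$ is trivial, \cref{thm:smooth-reg-embed-classification} implies that $\mathcal{R}'$ is determined uniquely up to isomorphism.

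Next I would describe the $2$-Steinberg endomorphism $\phi = \mathcal{R}(F) = (f_0,q,\tau)$ realising the ${}^2\C_2$-structure on $\bG$. Up to conjugation, $\tau$ swaps the two simple roots $\alpha_1, \alpha_2$; the $q$-values are $(q(\alpha_1),q(\alpha_2)) = (2^{n+1},2^n)$ for some integer $n\geqslant 0$; and $f_0$ is determined on the weight lattice by $f_0(\omega_1) = 2^n\omega_2$, $f_0(\omega_2) = 2^{n+1}\omega_1$, so that $f_0^2 = 2^{2n+1}\cdot\mathrm{id}_X$. Now suppose for a contradiction that a smooth regular embedding $\pi : \bG \to \bG'$ as in the statement exists, compatible with Steinberg endomorphisms $F$ on $\bG$ and $F'$ on $\bG'$. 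By \cref{prop:characterisation-of-Steinberg-Frobenius} and \cref{lem:lift-frob}, this is equivalent to the existence of a $2$-Steinberg endomorphism $\phi' = (f_0',q',\tau')$ on $\mathcal{R}'$ satisfying $p_1 \circ f_0' = f_0 \circ p_1$, where $p_1 : \mathcal{R}' \to \mathcal{R}$ is the projection of the central product.

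The crux of the argument --- and the step I expect to be the main obstacle --- is ruling out such a lift. Working in the $\mathbb{Z}$-basis $(e_1,e_2,e_3) = ((\omega_1,1),(\omega_2,0),(0,2))$ of $X' = X \oplus_A T$ (an analogue of the basis from \cref{exmp:Sp4} adapted to $\rk \mathcal{T} = 1$), the compatibility $p_1 \circ f_0' = f_0 \circ p_1$ together with the $p$-morphism identities $f_0'(\alpha_2') = 2^{n+1}\alpha_1'$ and $f_0'(\alpha_1') = 2^n\alpha_2'$ forces the matrix of $f_0'$ in this basis to take the form
\begin{equation*}
\begin{pmatrix} 0 & 2^{n+1} & 0 \\ 2^n & 0 & 0 \\ m & -2^n & 2m \end{pmatrix}
\end{equation*}
for a single integer parameter $m \in \mathbb{Z}$. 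The eigenvalues of $f_0'$ are therefore $\pm 2^n\sqrt{2}$ (inherited from the upper-left $2 \times 2$ block, coming from $f_0$) together with $2m$ acting on the rank-one centre component. However, any $p$-Steinberg endomorphism has some power equal to a $p$-Frobenius, which forces the absolute values of all eigenvalues of $f_0'$ to coincide. This would require $|2m| = 2^{n+1/2}$, which has no solution with $m \in \mathbb{Z}$, yielding the desired contradiction.
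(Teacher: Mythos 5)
Your proposal is correct and takes essentially the same approach as the paper: both reduce to the central product $\mathcal{R}\oplus_{(A,h,f)}\mathcal{T}$ with rank-one $\mathcal{T}$, work in the basis $(e_1,e_2,e_3)$, constrain the form of a putative lifted Steinberg endomorphism, and derive an arithmetic contradiction from the resulting $2$-power exponents. The only (minor) variations are that the paper uses \cref{cor:iso-central-prod} to force $\psi(x,y)=(F^*(x),2^sy)$ and then explicitly computes $\psi^2(e_2)=2^{2r+1}e_2$ versus $\psi^2(e_3)=2^{2s}e_3$, whereas you obtain the constrained matrix directly from the $p$-morphism root identities and appeal to the equal-modulus condition on eigenvalues of a $p$-Steinberg endomorphism, reaching the same obstruction ($2^{\text{odd}}\neq2^{\text{even}}$, equivalently integer $\neq2^{n+1/2}$).
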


\begin{proof}
Assume $\pi : \mathcal{R}' \to \mathcal{R}$ is a derived embedding of root data where $\mathcal{R}' = (X',\Phi',\widecheck{X}',\widecheck{\Phi}')$ and $\Tor(X'/\mathbb{Z}\Phi') = \{0\}$. Hence, $\pi$ is a smooth regular embedding in the absence of $2$-Steinberg endomorphisms. As $\pi(\Phi'^{\top}) = \mathbb{Z}\Phi$ we can assume, by \cref{lem:iso-of-der-embed}, that $\mathcal{R}' = \mathcal{R} \oplus_{(A,h,f)} \mathcal{T}$ where $\mathcal{T} = (T,\emptyset,\widecheck{T},\emptyset)$ is a torus, $A = X/\mathbb{Z}\Phi$, $h : X \to A$ is the natural projection map, and $f : T \to A$ is a surjective homomorphism.

Let us assume that $\rk(\mathcal{T}) = 1$ then $T = \mathbb{Z}\xi$, for some $\xi \in T$, and as $A \cong \mathbb{Z}/2\mathbb{Z}$ we must have $2\xi$ is a basis of $\Ker(f)$. With the notation as in \cref{exmp:Sp4} we have
\begin{equation*}
X' = \{(a\omega_1+b\omega_2,c\xi) \in X \oplus T \mid a-c \in 2\mathbb{Z}\} = \mathbb{Z}e_1 \oplus \mathbb{Z}e_2 \oplus \mathbb{Z}e_3
\end{equation*}
where $e_1 = (\omega_1,\xi)$, $e_2 = (\omega_2,0)$ and $e_3 = (0,2\xi)$. We can assume that $F$ is such that $F^*(\omega_1) = 2^r\omega_2$ and $F^*(\omega_2) = 2^{r+1}\omega_1$ for some integer $r > 0$. We assume for a contradiction that there exists a $2$-Steinberg endomorphism $\psi : \mathcal{R}' \to \mathcal{R}'$ such that $p_1 \circ \psi = F^*\circ p_1$ where $p_1 : \mathcal{R}' \to \mathcal{R}$ is the natural projection map. By \cref{cor:iso-central-prod,rem:lifting-Steinberg-Frob} it follows that there exists an integer $s \geqslant 0$ such that $\psi(x,y) = (F^*(x),2^sy)$ for all $(x,y) \in X' = X \oplus_{(A,h,f)} T$.

As $\psi$ is assumed to be a $2$-Steinberg endomorphism there must be some power of $\psi$ which acts as multiplication by a $2$-power. Clearly no odd power of $\psi$ can have this property because no odd power of $F^*$ has this property. Furthermore, one readily checks that $\psi^2(e_2) = 2^{2r+1}e_2$ and $\psi^2(e_3) = 2^{2s}e_3$ so clearly no even power of $\psi$ can have this property.
\end{proof}

\section{Asai's Reduction Techniques}\label{sec:asai-red-technique}
\begin{proof}[{}{of \cref{lem:completion-of-embedding}}]
Let us fix a maximal torus $\bT \leqslant \bG$ and maximal tori $\bT_i \leqslant \bG_i$ such that $\sigma_i(\bT) \leqslant \bT_i$. If $\mathcal{R} = \mathcal{R}(\bG,\bT) = (X,\Phi,\widecheck{X},\widecheck{\Phi})$ and $\mathcal{R}_i = \mathcal{R}(\bG_i,\bT_i) = (X_i,\Phi_i,\widecheck{X}_i,\widecheck{\Phi}_i)$ are the corresponding root data then $\sigma_i^* : \mathcal{R}_i \to \mathcal{R}$ is a derived embedding of root data by \cref{pa:der-embed-induces-der-embed}. We define a torus $\mathcal{S} = (S,\emptyset,\widecheck{S},\emptyset)$ by setting
\begin{equation*}
S = \{(x_1,x_2) \in X_1 \oplus X_2 \mid \sigma_1^*(x_1) = \sigma_2^*(x_2)\}
\end{equation*}
and $\widecheck{S} = \Hom(S,\mathbb{Z})$.

The torus $\mathcal{S}$ is equipped with a homomorphism $f : S \to A := X/\mathbb{Z}\Phi$ defined by
\begin{equation*}
f(x_1,x_2) = \sigma_1^*(x_1) + \mathbb{Z}\Phi = \sigma_2^*(x_2) + \mathbb{Z}\Phi,
\end{equation*}
which is surjective because the homomorphisms $\sigma_i^* : X_i \to X$ are surjective. In particular, we may form the central product $\mathcal{R} \oplus_{(A,h,f)} \mathcal{S}$ where $h : X \to A$ is the natural projection map. It follows from \cref{lem:fibre-prod} that the quotient $(X \oplus_{(A,h,f)} S)/\mathbb{Z}\Phi$ has no torsion.

By \cref{lem:iso-of-der-embed} there exists an isomorphism $\phi_i : \mathcal{R}_i \to \mathcal{R} \oplus_{(A_i,h_i,f_i)}\mathcal{S}_i$ where $\mathcal{S}_i = (\mathcal{R}_i)_{\rad}$, $A_i = X/\sigma_i^*(\Phi_i^{\top})$, $h_i : X \to A_i$ is the natural projection map, and $f_i : S_i \to A_i$ is given by $f_i(y + \Phi_i^{\top}) = \sigma_i^*(y) + \sigma_i^*(\Phi_i^{\top})$. Moreover, we note that $\lambda_i = \sigma_i^*\circ\phi_i^{-1} : \mathcal{R} \oplus_{(A_i,h_i,f_i)}\mathcal{S}_i \to \mathcal{R}$ is the projection onto the first factor.

Now, let us observe that we have homomorphisms $\tau_i : \mathcal{S} \to \mathcal{S}_i$, defined by $\tau_i(x_1,x_2) = x_i + \Phi_i^{\top}$, which are surjective because the maps $\sigma_i^*$ are surjective. We claim that the maps $\pi_i : \mathcal{R} \oplus_{(A,h,f)}\mathcal{S} \to \mathcal{R} \oplus_{(A_i,h_i,f_i)}\mathcal{S}_i$ defined by $\pi_i(x,y) = (x,\tau_i(y))$ are surjective homomorphisms of root data. For this, assume $(x,y+\Phi_i^{\top}) \in X_i\oplus_{(A_i,h_i,f_i)} (X_i/\Phi_i^{\top})$ then we have $x - \sigma_i^*(y) \in \sigma_i^*(\Phi_i^{\top})$ so there exists an element $m \in \Phi_i^{\top}$ such that $x - \sigma_i^*(y) = \sigma_i^*(m)$. Or, in other words, we have $x = \sigma_i^*(y+m)$. By the surjectivity of $\tau_i : S \to S_i$ there exists an element $s = (s_1,s_2) \in S$ such that $s_i = y+m$ and so $\tau_i(s) = s_i + \Phi_i^{\top} = y+m+\Phi_i^{\top}$. From the definition of $f$ we have
\begin{equation*}
f(s) = \sigma_i^*(s_i) + \mathbb{Z}\Phi = \sigma_i^*(y+m) + \mathbb{Z}\Phi = x + \mathbb{Z}\Phi
\end{equation*}
so $(x,s) \in X \oplus_{(A,h,f)} S$ and $\pi_i(x,s) = (x,\tau_i(s)) = (x,y+m+\Phi_i^{\top}) = (x,y+\Phi_i^{\top})$, which proves the claim.

It is clear that we have a commutative diagram
\begin{equation}\label{eq:comm-diag-rd-homs}
\begin{tikzpicture}[baseline=(current  bounding  box.center)]
\matrix (m) [matrix of math nodes, row sep=3em, column sep=3em, text height=1.5ex, text depth=0.25ex]
{ \mathcal{R} \oplus_{(A,h,f)} \mathcal{S} & \mathcal{R} \oplus_{(A_1,h_1,f_1)}\mathcal{S}_1\\
  \mathcal{R} \oplus_{(A_2,h_2,f_2)}\mathcal{S}_2 & \mathcal{R}\\ };

\path [>=stealth,->]	(m-1-1) edge node[font=\small,above,text height=1.5ex,text depth=0.25ex,->] {$\pi_1$} (m-1-2)
		(m-1-1) edge node[font=\small,left,text height=1.5ex,text depth=0.25ex] {$\pi_2$} (m-2-1)
		(m-1-2) edge node[font=\small,right,text height=1.5ex,text depth=0.25ex] {$\lambda_1$} (m-2-2)
		(m-2-1) edge node[font=\small,below,text height=1.5ex,text depth=0.25ex] {$\lambda_2$} (m-2-2);
\end{tikzpicture}
\end{equation}
Now, appealing to \cref{thm:full-functor,prop:der-embeds-root-data} there exists a pair $(\bG',\bT')$ and smooth regular embeddings $\sigma_i' : (\bG_i,\bT_i) \to (\bG',\bT')$ such that $\mathcal{R}(\bG',\bT') = \mathcal{R} \oplus_{(A,h,f)} \mathcal{S}$ and $\mathcal{R}(\sigma_i') = \phi_i^{-1}\circ\pi_i$. Moreover, as $\lambda_1\circ\pi_1 = \mathcal{R}(\sigma_1'\circ\sigma_1) = \mathcal{R}(\sigma_2'\circ\sigma_2) = \lambda_2\circ\pi_2$ there exists an element $t \in \bT$ such that $\sigma_1\circ\sigma_1' = \sigma_2\circ\sigma_2'\circ\Inn t$. Hence, replacing $\sigma_2'$ by $\sigma_2'\circ\Inn t$ we obtain the result in the absence of Steinberg endomorphisms.

We now assume that $\bG$ is endowed with a Steinberg/Frobenius endomorphism $F : \bG \to \bG$. By assumption there exist Steinberg/Frobenius endomorphisms $F_i : \bG_i \to \bG_i$ such that $\sigma\circ F = \sigma_i\circ F$. We will assume fixed a pinning $(\bG,\bB,\bT,(x_{\alpha}))$ that is compatible with $F$, c.f., \cref{rem:compat-real}. From this pinning we construct a pinning $(\bG_i,\bB_i,\bT_i,(x^{(i)}_{\alpha}))$ by setting
\begin{equation*}
\bT_i = \sigma(\bT)Z(\bT_i), \qquad \bB_i = \sigma(\bB)Z(\bG_i), \qquad x^{(i)}_{\alpha} := \sigma_i\circ x_{\alpha}.
\end{equation*}
As $F_i\circ \sigma_i = \sigma_i \circ F$ one easily sees that the pinning $(\bG_i,\bB_i,\bT_i,(x^{(i)}_{\alpha}))$ is compatible with $F_i$ and we have isotypies
\begin{equation*}
\sigma_i : (\bG,\bB,\bT,(x_{\alpha})) \to (\bG_i,\bB_i,\bT_i,(x^{(i)}_{\alpha})).
\end{equation*}

We wish to now endow $\bG'$ with an appropriate Steinberg/Frobenius endomorphism. With this in mind we define a $\mathbb{Z}$-module homomorphism $\psi : X\oplus S \to X \oplus S$ by setting
\begin{equation*}
\psi(x,(s_1,s_2)) = (F^*(x),(F_1^*(s_1),F_2^*(s_2))),
\end{equation*}
which makes sense because $\sigma_i^*\circ F_i^* = F^*\circ\sigma_i^*$. Moreover it is easily seen that $\psi$ induces a $p$-Steinberg/$p$-Frobenius endomorphism $\mathcal{R} \oplus_{(A,h,f)} \mathcal{S} \to \mathcal{R} \oplus_{(A,h,f)} \mathcal{S}$. Hence, by \cref{thm:full-functor,prop:characterisation-of-Steinberg-Frobenius} there exists a Steinberg/Frobenius endomorphism $F' : (\bG',\bT') \to (\bG',\bT')$ such that $\mathcal{R}(F') = \psi$. Now we have obtained $F'$ we will assume that $(\bG',\bB',\bT',(x_{\alpha}'))$ is a pinning compatible with $F'$.

We claim that we may assume the commutative diagram in \cref{eq:comm-diag-rd-homs} provides a commutative diagram
\begin{equation*}
\begin{tikzpicture}[baseline=(current  bounding  box.center)]
\matrix (m) [matrix of math nodes, row sep=3em, column sep=4.5em, text height=1.5ex, text depth=0.25ex]
{ \widehat{\mathcal{R}}(\bG',\bB',\bT',(x_{\alpha}')) & \widehat{\mathcal{R}}(\bG_1,\bB_1,\bT_1,(x_{\alpha}^{(1)}))\\
  \widehat{\mathcal{R}}(\bG_2,\bB_2,\bT_2,(x_{\alpha}^{(2)})) & \widehat{\mathcal{R}}(\bG,\bB,\bT,(x_{\alpha}))\\ };

\path [>=stealth,->]	(m-1-1) edge node[font=\small,above,text height=1.5ex,text depth=0.25ex,->] {$\phi_1^{-1}\circ\pi_1$} (m-1-2)
		(m-1-1) edge node[font=\small,left,text height=1.5ex,text depth=0.25ex] {$\phi_2^{-1}\circ\pi_2$} (m-2-1)
		(m-1-2) edge node[font=\small,right,text height=1.5ex,text depth=0.25ex] {$\sigma_1^*$} (m-2-2)
		(m-2-1) edge node[font=\small,below,text height=1.5ex,text depth=0.25ex] {$\sigma_2^*$} (m-2-2);
\end{tikzpicture}
\end{equation*}
of $p$-morphisms between based root data. Indeed, if $\bB$ determines the set of simple roots $\Delta \subseteq \Phi \subseteq X(\bT)$, then by the definition of $\psi$ we may certainly assume that $\bB'$ determines the set of simple roots obtained as the image of $\Delta$ embedded in the first factor of $X \oplus S$. With this the claim follows.

Applying \cref{prop:unique-morphism} we obtain unique morphisms
\begin{equation*}
\sigma_i' : (\bG_i,\bB_i,\bT_i,(x_{\alpha}^{(i)})) \to (\bG',\bB',\bT',(x_{\alpha}'))
\end{equation*}
such that $\widehat{\mathcal{R}}(\sigma_i') = \phi_i^{-1}\circ\pi_i$. By the commutativity of the above diagram we have $\widehat{\mathcal{R}}(\sigma_1'\circ\sigma_1) = \widehat{\mathcal{R}}(\sigma_2'\circ\sigma_2)$ hence by \cref{prop:unique-morphism} we have $\sigma_1'\circ\sigma_1 = \sigma_2'\circ\sigma_2$.

The central product $\mathcal{R} \oplus_{(A_i,h_i,f_i)}\mathcal{S}_i$ is endowed with a $p$-Steinberg/$p$-Frobenius endomorphism defined by $\gamma_i(x,s) = (F^*(x),F_i^*(s))$. Moreover, we have $\phi_i \circ F_i^* = \gamma_i\circ\phi_i$ because $\sigma_i^*\circ F_i^* = F^*\circ\sigma_i^*$, c.f., \cref{lem:iso-of-der-embed}. One readily checks that we have $\gamma_i\circ\pi_i = \pi_i\circ\psi$ so
\begin{equation*}
\widehat{\mathcal{R}}(\sigma_i'\circ F_i) = F_i^*\circ(\phi_i^{-1}\circ \pi_i) = \phi_i^{-1}\circ \gamma_i\circ\pi_i = (\phi_i^{-1}\circ \pi_i) \circ \psi = \widehat{\mathcal{R}}(F'\circ\sigma_i').
\end{equation*}
Appealing again to \cref{prop:unique-morphism} we get that $\sigma_i'\circ F_i = F'\circ\sigma_i'$ which completes the proof.
\end{proof}

\begin{proof}[of \cref{prop:quotient-by-sc-der}]
Let $\mathcal{R} = \mathcal{R}(\bG,\bT) = (X,\Phi,\widecheck{X},\widecheck{\Phi})$ be the root datum of $\bG$ with respect to a maximal torus $\bT \leqslant \bG$. By \cref{lem:smooth-reg-embed} there exists a smooth regular embedding $f : \mathcal{R}' \to \widecheck{\mathcal{R}}$ of root data. In particular, if $\mathcal{R}' = (X',\Phi',\widecheck{X}',\widecheck{\Phi}')$ then $X'/\mathbb{Z}\Phi'$ has no torsion. As $f : X' \to \widecheck{X}$ is surjective we have a short exact sequence
\begin{equation*}
\begin{tikzpicture}[baseline={([yshift=-1ex]current bounding box.center)}]
\matrix (m) [matrix of math nodes, row sep=4em, column sep=2em, text height=1.5ex, text depth=0.25ex]
{ 0 & \Ker(f) & X' & \widecheck{X} & 0\\};

\path [>=stealth,->]	(m-1-1) edge (m-1-2)
		(m-1-2) edge (m-1-3)
		(m-1-3) edge (m-1-4)
		(m-1-4) edge (m-1-5);
\end{tikzpicture}
\end{equation*}
which splits because $\widecheck{X}$ is a free $\mathbb{Z}$-module. This implies that the dual $\widecheck{f} : X \to \widecheck{X}'$ is injective and the quotient $\widecheck{X}'/\widecheck{f}(X)$ is free because $\widecheck{f}(X)$ has a complement. By \cref{thm:full-functor} there exists a pair $(\widetilde{\bG},\widetilde{\bT})$, such that $\mathcal{R}(\widetilde{\bG},\widetilde{\bT}) = \widecheck{\mathcal{R}}'$, and an isotypy $\pi : (\widetilde{\bG},\widetilde{\bT}) \to (\bG,\bT)$ such that $\mathcal{R}(\pi) = \widecheck{f} : \mathcal{R} \to \widecheck{\mathcal{R}}'$. According to \cite[V.22.4]{borel:1991:linear-algebraic-groups} and \cite[\S5]{steinberg:1999:the-isomorphism-and-isogeny-theorems} we have $\pi$ is a surjective central homomorphism.

If $\widetilde{\bG}_{\der} \leqslant \widetilde{\bG}$ is the derived subgroup of $\widetilde{\bG}$ then $\widetilde{\bT}_{\der} = \widetilde{\bT} \cap \widetilde{\bG}_{\der} \leqslant \widetilde{\bG}_{\der}$ is a maximal torus. According to \cite[8.1.9]{springer:2009:linear-algebraic-groups} we have $\mathcal{R}(\widetilde{\bG}_{\der},\widetilde{\bT}_{\der}) = \widecheck{\mathcal{R}}_{\der}' = (\widecheck{X}'/\widecheck{\Phi}'^{\perp},\widecheck{\Phi}',\Phi'^{\top},\Phi')$. This implies $\widetilde{\bG}_{\der}$ is simply connected because $\Tor(X'/\mathbb{Z}\Phi') = \{0\}$ so $\Phi'^{\top} = \mathbb{Z}\Phi'$.

Recall that $\Ker(\pi)$ is contained in $Z(\widetilde{\bG})$ so is a diagonalisable group. If $\iota : \Ker(\pi) \to \widetilde{\bT}$ is the natural closed embedding then we have a short exact sequence of abelian groups
\begin{equation*}
\begin{tikzpicture}[baseline={([yshift=-1ex]current bounding box.center)}]
\matrix (m) [matrix of math nodes, row sep=4em, column sep=2em, text height=1.5ex, text depth=0.25ex]
{ 0 & X(\bT) & X(\widetilde{\bT}) & X(\Ker(\pi)) & 0\\};

\path [>=stealth,->]	(m-1-1) edge (m-1-2)
		(m-1-2) edge node [above] {$\pi^*$} (m-1-3)
		(m-1-3) edge node [above] {$\iota^*$} (m-1-4)
		(m-1-4) edge (m-1-5);
\end{tikzpicture}
\end{equation*}
because $X(-)$ is exact, c.f., \cite[0.21]{digne-michel:1991:representations-of-finite-groups-of-lie-type}. In particular, we have $X(\Ker(\pi)) \cong \widecheck{X}'/\widecheck{f}(X)$ but, as was noted above, the quotient $\widecheck{X}'/\widecheck{f}(X)$ has no torsion so $\Ker(\pi)$ is a torus.

Now $Z(\widetilde{\bG})$ is connected if and only if $\Tor_{p'}(\widecheck{X}'/\mathbb{Z}\widecheck{\Phi}') = \{0\}$. As $\widecheck{f}(X)$ has a complement we have $\Tor(\widecheck{X}'/\mathbb{Z}\widecheck{\Phi}') = \Tor(\widecheck{f}(X)/\mathbb{Z}\widecheck{\Phi}')$. Moreover, as $\widecheck{f}(X)/\mathbb{Z}\widecheck{\Phi}' = \widecheck{f}(X/\mathbb{Z}\Phi)$ and $\widecheck{f}$ is injective we have $\Tor(\widecheck{X}'/\mathbb{Z}\widecheck{\Phi}') \cong \Tor(X/\mathbb{Z}\Phi)$ so $Z(\widetilde{\bG})$ is connected/smooth if and only if $Z(\bG)$ is connected/smooth.

Finally, assume $\bG$ is endowed with a Steinberg/Frobenius endomorphism $F : \bG \to \bG$ then we may assume $\bT \leqslant \bG$ is $F$-stable so that $\mathcal{R}(F) = F^* : \mathcal{R} \to \mathcal{R}$ is the corresponding $p$-Steinberg/$p$-Frobenius endomorphism. By \cref{pa:dual-p-morphism} we see that the dual morphism $\widecheck{F}^* : \widecheck{\mathcal{R}} \to \widecheck{\mathcal{R}}$ is again a $p$-Steinberg/$p$-Frobenius endomorphism. As $f : \mathcal{R}' \to \widecheck{\mathcal{R}}$ is a smooth embedding of root data there exists a $p$-Steinberg/$p$-Frobenius endomorphism $\phi : \mathcal{R}' \to \mathcal{R}'$ such that $\widecheck{F}^*\circ f = f\circ\phi$. By \cref{pa:dual-p-morphism} we have duality is bijective and contravariant on $p$-morphisms which implies that $\widecheck{f}\circ F^* = \widecheck{\phi}\circ\widecheck{f}$. Again appealing to \cref{pa:dual-p-morphism} we have $\widecheck{\phi} : \widecheck{\mathcal{R}}' \to \widecheck{\mathcal{R}}'$ is a $p$-Steinberg/$p$-Frobenius endomorphism. By \cref{lem:lift-frob} there exists a Steinberg/Frobenius endomorphism $\widetilde{F} : (\widetilde{\bG},\widetilde{\bT}) \to (\widetilde{\bG},\widetilde{\bT})$ such that $\mathcal{R}(\widetilde{F}) = \widecheck{\phi}$ and $\pi\circ \widetilde{F} = F\circ \pi$, which completes the proof.
\end{proof}

\section{Cyclically Permuted Factors}\label{sec:cyc-perm-factors}
\begin{assumption}
From this section on we assume $\mathbb{K} = \overline{\mathbb{F}_p}$ and $p>0$ is a prime. We choose an algebraic closure $\Ql$ with $\ell\neq p$ a prime and fix an involutive automorphism $\overline{\phantom{x}} : \Ql \to \Ql$ which maps every root of unity to its inverse. For any finite group $H$ we denote by $\Class(H)$ the vector space of all $\Ql$-class functions $f : H \to \Ql$. We consider this to be an inner product space with respect to the usual form defined by $\langle f,f'\rangle_H := |H|^{-1}\sum_{h\in H}f(h)\overline{f'(h)}$. The $\Ql$-irreducible characters are denoted by $\Irr(H) \subseteq \Class(H)$.
\end{assumption}

\begin{pa}\label{pa:permuted-factors-2}
In this section, we assume we are in the setting of \cref{pa:permuted-factors}. An easy calculation shows that any $F$-stable subset $X \subseteq \bG$ is of the form $X_1 \times F(X_1) \times \cdots \times F^{n-1}(X_1)$ for some $F^n$-stable subset $X_1 \subseteq \bG_1$. Hence the projection map $\pi_1 : \bG \to \bG_1$ clearly induces a bijection between the $F$-stable subsets of $\bG$ and the $F^n$-stable subsets of $\bG_1$.
\end{pa}

\begin{proof}[of \cref{lem:cyc-perm-smooth-embed}]
Let $\bT = \bT_1 \times \cdots \times \bT_n$ be an $F$-stable maximal torus of $\bG$ then $F(\bT_i) = \bT_{i+1}$. If $\mathcal{R} = \mathcal{R}(\bG,\bT) = (X,\Phi,\widecheck{X},\widecheck{\Phi})$ is the root datum of $(\bG,\bT)$ then we have $\mathcal{R} = \mathcal{R}_1 \oplus \cdots \oplus \mathcal{R}_n$ where $\mathcal{R}_i = \mathcal{R}(\bG_i,\bT_i) = (X_i,\Phi_i,\widecheck{X}_i,\widecheck{\Phi}_i)$ is the root datum of $(\bG_i,\bT_i)$. Let $f_i : X_i \to A_i := X_i/\mathbb{Z}\Phi_i$ be the natural projection map then we have smooth regular embeddings $h_i : \mathcal{R}_i' \to \mathcal{R}_i$ where $\mathcal{R}_i' = \mathcal{R}_i \oplus_{(A_i,f_i,f_i)} \mathcal{R}_i^{\circ} = (X_i',\Phi_i',\widecheck{X}_i',\widecheck{\Phi}_i')$, see \cref{lem:smooth-reg-embed}. If $\mathcal{R}' = (X',\Phi',\widecheck{X}',\widecheck{\Phi}')$ is the root datum $\mathcal{R}_1' \oplus \cdots \oplus\mathcal{R}_n'$ then we have a natural surjective homomorphism of root data $h = h_1\oplus \cdots \oplus h_n : \mathcal{R}' \to \mathcal{R}$.

Now $X$ is a direct sum $X_1 \oplus \cdots \oplus X_n$ and by the assumption on $\bT$ we have $F^*(X_{i+1}) = X_i$. We certainly have a $\mathbb{Z}$-module homomorphism $\psi_i : X_{i+1}\oplus X_{i+1} \to X_i\oplus X_i$ defined by $\psi_i(x,y) = (F^*(x),F^*(y))$. From the definition it is readily checked that this restricts to a $\mathbb{Z}$-module homomorphism $\psi_i : X_{i+1}' \to X_i'$. Clearly the $\mathbb{Z}$-module $X'$ is a direct sum $X_1' \oplus \cdots \oplus X_n'$ and we may define a $\mathbb{Z}$-module homomorphism $\psi : X' \to X'$ by setting $\psi(x_1,\dots,x_n) = (\psi_1(x_2),\dots,\psi_{n-1}(x_n),\psi_n(x_1))$. It is easily checked that $\psi$ induces a $p$-Steinberg endomorphism $\mathcal{R}' \to \mathcal{R}'$ such that $h\circ\psi = F^*\circ h$. Moreover $\psi^n$ stabilises $\mathcal{R}_1'$ and satisfies $h_1\circ\psi^n = {F^*}^n \circ h$. From this the statement of the lemma is easily obtained, as in the proof of \cref{lem:smooth-reg-embed}. We leave the details to the reader.
\end{proof}

\subsection{Deligne--Lusztig Induction and Restriction}
\begin{pa}\label{pa:permuted-factors-par-levi}
An easy calculation shows that the projection map $\pi_1 : \bG \to \bG_1$ restricts to an isomorphism of finite groups $\bG^F \to \bG_1^{F^n}$, so the inflation $\pi_1^* : \Class(\bG_1^{F^n}) \to \Class(\bG^F)$ through $\pi_1$ is an isometry. It is our purpose now to show that the construction of Deligne--Lusztig induction is compatible with $\pi_1$. For this let us assume that $\bP_1 \leqslant \bG_1$ is a parabolic subgroup of $\bG_1$ with $\bL_1 \leqslant \bP_1$ an $F^n$-stable Levi complement of $\bP_1$. There is then a unique $F$-stable subgroup $\bL \leqslant \bG$ such that $\pi_1(\bL) = \bL_1$, which is of the form $\bL_1 \times F(\bL_1) \times \cdots \times F^{n-1}(\bL_1)$, c.f., \cref{pa:permuted-factors-2}. As the parabolic subgroup $\bP_1$ is not necessarily $F$-stable there may be many choices of parabolic subgroup $\bP \leqslant \bG$ such that $\pi_1(\bP) = \bP_1$. We will assume that $\bP = \bP_1 \times F^{-n+1}(\bP_1) \times \cdots \times F^{-1}(\bP_1)$; this makes sense because $F^n(\bP_1)$ is also a parabolic subgroup of $\bG_1$ with $\bL_1$ as an $F^n$-stable Levi complement. With this we may form the Deligne--Lusztig induction maps $R_{\bL\subseteq\bP}^{\bG} : \Class(\bL^F) \to \Class(\bG^F)$ and $R_{\bL_1\subseteq\bP_1}^{\bG_1} : \Class(\bL_1^{F^n}) \to \Class(\bG_1^{F^n})$, c.f., \cite[1]{lusztig:1976:on-the-finiteness}. The following is an analogue of \cite[13.22]{digne-michel:1991:representations-of-finite-groups-of-lie-type} in our setting.
\end{pa}

\begin{prop}\label{prop:comm-DL-ind}
Assume the notation and assumptions of \cref{pa:permuted-factors,pa:permuted-factors-par-levi} then we have a commutative diagram
\begin{center}
\begin{tikzpicture}
\matrix (m) [matrix of math nodes, row sep=3em, column sep=3em, text height=1.5ex, text depth=0.25ex]
{ \Class(\bL_1^{F^n}) & \Class(\bG_1^{F^n})\\
  \Class(\bL^F) & \Class(\bG^F)\\ };

\path [>=stealth,->]	(m-1-1) edge node[font=\small,above,text height=1.5ex,text depth=1ex,->] {$R_{\bL_1\subseteq \bP_1}^{\bG_1}$} (m-1-2)
		(m-1-1) edge node[font=\small,left,text height=1.5ex,text depth=0.25ex] {$\pi_1^*$} (m-2-1)
		(m-1-2) edge node[font=\small,right,text height=1.5ex,text depth=0.25ex] {$\pi_1^*$} (m-2-2)
		(m-2-1) edge node[font=\small,below,text height=1.5ex,text depth=0.25ex] {$R_{\bL\subseteq\bP}^{\bG}$} (m-2-2);
\end{tikzpicture}
\end{center}
\end{prop}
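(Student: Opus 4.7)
The plan is to match up the Deligne--Lusztig varieties underlying the two sides of the diagram directly and then to pass to $\ell$-adic cohomology. Recall that $R_{\bL\subseteq\bP}^{\bG}(\lambda)$ is computed from the Lefschetz trace of elements of $\bG^F \times \bL^F$ on the compactly supported $\ell$-adic cohomology of the variety $Y_{\bU} := \{g \in \bG \mid g^{-1}F(g) \in \bU\}$, where $\bU$ is the unipotent radical of $\bP$; the analogous variety for $(\bG_1,\bU_1)$ built from the Steinberg endomorphism $F^n$ will be denoted $Y_{\bU_1}^{\bG_1}$. Writing $\bU = \bU^{(1)}\times\cdots\times \bU^{(n)}$ with $\bU^{(1)} = \bU_1$ and $\bU^{(k)} = F^{-(n-k+1)}(\bU_1)$ for $k\geq 2$, and decomposing $g = (g_1,\ldots,g_n)$ accordingly, the defining condition of $Y_\bU$ decouples into $u_k := g_k^{-1}F(g_{k-1}) \in \bU^{(k)}$ for $k = 2,\ldots,n$, together with $g_1^{-1}F(g_n) \in \bU_1$.

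The main technical step will be to solve this system explicitly. The $n-1$ equations for $k\geq 2$ determine $g_2,\ldots,g_n$ from $g_1$ and $u_2,\ldots,u_n$ recursively by $g_k = F^{k-1}(g_1)\cdot\prod_{j=2}^{k}F^{k-j}(u_j^{-1})$. Using that $F^{n-k+1}(\bU^{(k)}) = \bU_1$ for every $k \geq 2$ (which holds because $F$ restricts to a bijective morphism $\bG_k \to \bG_{k+1}$), the remaining condition $g_1^{-1}F(g_n) \in \bU_1$ unwinds to $g_1^{-1}F^n(g_1) \in \bU_1$, which is exactly the condition defining $Y_{\bU_1}^{\bG_1}$. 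This yields an isomorphism of varieties
\[
Y_\bU \;\xrightarrow{\ \sim\ }\; Y_{\bU_1}^{\bG_1} \times \bU^{(2)}\times\cdots\times \bU^{(n)}
\]
whose first projection is $\pi_1$. I expect this explicit identification to be the main obstacle; the remainder is essentially formal.

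Next I would verify equivariance. For $h = (h_1,\ldots,h_n) \in \bG^F$ the relations $h_k = F^{k-1}(h_1)$ force left multiplication by $h$ to fix each $u_k$, so the $\bG^F$-action lives entirely on the first factor $Y_{\bU_1}^{\bG_1}$; a parallel computation for $l \in \bL^F$ gives $u_k \mapsto F^{k-1}(l_1)^{-1}\, u_k\, F^{k-1}(l_1)$, so $\bL^F$ acts on each $\bU^{(k)}$ by conjugation. Passing to $\ell$-adic cohomology via K\"unneth, each factor $H^{\ast}_c(\bU^{(k)},\Ql)$ is one-dimensional, concentrated in top degree $2\dim \bU^{(k)}$, with trivial $\bL^F$-action, since this conjugation factors through the connected group $F^{k-1}(\bL_1)$, which acts trivially on one-dimensional top cohomology. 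This produces an equivariant isomorphism
\[
H_c^{i}(Y_\bU,\Ql) \;\cong\; H_c^{\,i-2N}(Y_{\bU_1}^{\bG_1},\Ql)
\]
of $\bG^F\times\bL^F$-modules, with $N = \sum_{k\geq 2}\dim\bU^{(k)}$ and both actions factoring through $\bG_1^{F^n}\times\bL_1^{F^n}$ via $\pi_1$.

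Plugging this into the Lefschetz trace formulae for the two Deligne--Lusztig inductions, the identity $R_{\bL\subseteq\bP}^{\bG}(\pi_1^{\ast}\lambda_1) = \pi_1^{\ast} R_{\bL_1\subseteq\bP_1}^{\bG_1}(\lambda_1)$ then drops out using $|\bL^F| = |\bL_1^{F^n}|$, the evenness of the shift $2N$ (so Lefschetz signs are unaffected), and the definition of $\pi_1^{\ast}$.
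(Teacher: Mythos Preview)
Your proposal is correct and follows essentially the same route as the paper: both reduce the statement to comparing Lefschetz traces on the Deligne--Lusztig varieties $\bY_{\bU}^{\bG}$ and $\bY_{\bU_1}^{\bG_1}$, and both show that the former sits over the latter with affine-space fibres coming from the factors $\bU^{(2)},\dots,\bU^{(n)}$. The only difference is packaging: the paper stops at describing the fibres of $\pi_1$ and then invokes \cite[10.12(ii)]{digne-michel:1991:representations-of-finite-groups-of-lie-type} to equate the Lefschetz traces directly, whereas you go one step further and exhibit an explicit product decomposition $\bY_{\bU}^{\bG}\cong \bY_{\bU_1}^{\bG_1}\times\bU^{(2)}\times\cdots\times\bU^{(n)}$, check equivariance by hand, and then appeal to K\"unneth together with the triviality of the $\bL^F$-action on the top cohomology of each $\bU^{(k)}$. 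Your version is more self-contained; the paper's is shorter by outsourcing the cohomological step to a standard lemma.
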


\begin{proof}
For any $\mathbb{K}$-variety $\bX$ and any finite order automorphism $h \in \Aut(\bX)$ we define the Lefschetz trace
\begin{equation*}
\mathscr{L}(h \mid \bX) = \sum_{i \in \mathbb{Z}} (-1)^i\Tr(h \mid H_c^i(\bX,\Ql)),
\end{equation*}
where $H_c^i(\bX,\Ql)$ is the $i$th compactly supported $\ell$-adic cohomology group of $\bX$. Let $\bU \leqslant \bP$ be the unipotent radical of $\bP$ then $\bU_1 = \pi(\bU)$ is clearly the unipotent radical of $\bP_1$ and we have $\bU = \bU_1 \times F^{-n+1}(\bU_1) \times \cdots \times F^{-1}(\bU_1)$. Following \cite[1]{lusztig:1976:on-the-finiteness} we define varieties
\begin{align*}
\bY_{\bU}^{\bG} &= \{x \in \bG \mid x^{-1}F(x) \in \bU\},\\
\bY_{\bU_1}^{\bG_1} &= \{x_1 \in \bG_1 \mid x_1^{-1}F^n(x_1) \in \bU_1\},
\end{align*}
which are endowed with natural $\bG^F \times (\bL^F)^{\opp}$ and $\bG_1^{F^n} \times (\bL_1^{F^n})^{\opp}$ actions defined by left and right translation. By \cite[Proposition 4.5]{digne-michel:1991:representations-of-finite-groups-of-lie-type} we have for any $\chi \in \Class(\bL_1^{F^n})$ and $g \in \bG^F$ that
\begin{align*}
(\pi_1^* \circ R_{\bL_1\subseteq\bP_1}^{\bG_1})(\chi)(g) &= |\bL_1^{F^n}|^{-1}\sum_{l_1 \in \bL_1^{F^n}} \mathscr{L}((\pi_1(g),l_1) \mid \bY_{\bU_1}^{\bG_1})\chi(l_1^{-1}),\\
(R_{\bL_1\subseteq\bP_1}^{\bG_1}\circ \pi_1^*)(\chi)(g) &= |\bL^F|^{-1}\sum_{l \in \bL^F} \mathscr{L}((g,l) \mid \bY_{\bU}^{\bG})\chi(\pi_1(l)^{-1}).
\end{align*}
As $\pi_1$ induces isomorphisms $\bG^F \to \bG_1^{F^n}$ and $\bL^F \to \bL_1^{F^n}$ we see immediately from these formulas that we need only show that
\begin{equation}\label{eq:Lefschetz-trace}
\mathscr{L}((g,l) \mid \bY_{\bU}^{\bG}) = \mathscr{L}((\pi_1(g),\pi_1(l)) \mid \bY_{\bU_1}^{\bG_1})
\end{equation}
for any $(g,l) \in \bG^F \times (\bL^F)^{\opp}$.

From the definition of $\bY_{\bU}^{\bG}$ we see that any element $x \in \bY_{\bU}^{\bG}$ is of the form $(x_1,\dots,x_n)$ with $x_i \in \bG_i$ and
\begin{equation}\label{eq:condition}
x_{i+1}^{-1}F(x_i) \in F^{-n+i}(\bU_1),
\end{equation}
with the indices computed cyclically. From this we see that
\begin{equation*}
x_1^{-1}F^n(x_1) = x_1^{-1}F(x_n)\cdot F(x_n^{-1}F(x_{n-1})) \cdot F^2(x_{n-1}^{-1}F(x_{n-2})) \cdots F^{n-1}(x_2^{-1}F(x_1)) \in \bU_1.
\end{equation*}
In particular we have $\pi$ defines a surjective morphism $\bY_{\bU}^{\bG} \to \bY_{\bU_1}^{\bG_1}$. The surjectivity is easy to see because for any $x_1 \in \bY_{\bU_1}^{\bG_1}$ we have $\hat{x}_1 = (x_1,F(x_1),\dots,F^{n-1}(x_1)) \in \bG$ satisfies
\begin{equation*}
\hat{x}_1^{-1}F(\hat{x}_1) = (x_1^{-1}F^n(x_1),1,\dots,1),
\end{equation*}
so is clearly contained in $\bY_{\bU}^{\bG}$. Now assume $u = (1,u_1,\dots,u_{n-1}) \in \bG$ is such that $u_i \in F^{-n+i}(\bU_1)$ then we have
\begin{equation*}
(\hat{x}_1u)^{-1}F(\hat{x}_1u) = (x_1^{-1}F^n(x_1)F(u_{n-1}),u_1^{-1},u_2^{-1}F(u_1),u_3^{-1}F(u_2),\dots,u_{n-1}^{-1}F(u_{n-2}))
\end{equation*}
which is certainly contained in $\bU$. This shows that $\hat{x}_1u \in \bY_{\bU}^{\bG}$ is in the fibre $\pi_1^{-1}(x_1)$ of $x_1 \in \bY_{\bU_1}^{\bG_1}$. Moreover the condition in \cref{eq:condition} shows that every element of $\pi_1^{-1}(x_1)$ is of this form; so $\pi_1^{-1}(x_1)$ is affine of dimension $(n-1)\dim \bU_1$. The desired equality in \cref{eq:Lefschetz-trace} now follows from \cite[10.12(ii)]{digne-michel:1991:representations-of-finite-groups-of-lie-type}.
\end{proof}

\subsection{Lusztig Series}
\begin{pa}
Fix an $F$-stable maximal torus $\bT_0 \leqslant \bG$ and let $\mathcal{R} = \mathcal{R}(\bG,\bT_0)$ be the corresponding root datum. By \cref{thm:full-functor} there exists a pair $(\bG^{\star},\bT_0^{\star})$, unique up to isomorphism, such that $\mathcal{R}(\bG^{\star},\bT_0^{\star}) = \widecheck{\mathcal{R}}$. By the above discussion the torus $\bT_0$ is of the form $\bT_1 \times \cdots \times \bT_n$ where $\bT_i = F^{i-1}(\bT_1)$. We then have $\mathcal{R} = \mathcal{R}_1 \oplus \cdots \oplus \mathcal{R}_n$ where $\mathcal{R}_i = \mathcal{R}(\bG_i,\bT_i)$. Clearly $\widecheck{\mathcal{R}} = \widecheck{\mathcal{R}}_1 \oplus \cdots \oplus \widecheck{\mathcal{R}}_n$ so we must have $\bG^{\star} = \bG_1^{\star} \times \cdots \times \bG_n^{\star}$ and $\bT_0^{\star} = \bT_1^{\star} \times \cdots \times \bT_n^{\star}$ where $\bT_i^{\star} \leqslant \bG_i^{\star}$ is a maximal torus and $\mathcal{R}(\bG_i^{\star},\bT_i^{\star}) = \widecheck{\mathcal{R}}_i$. Finally, again appealing to \cref{thm:full-functor}, there exists a Steinberg endomorphism $F^{\star} : (\bG^{\star},\bT_0^{\star}) \to (\bG^{\star},\bT_0^{\star})$ such that $\mathcal{R}(F^{\star}) = \widecheck{\mathcal{R}(F)}$. By construction we see that $\bT_i^{\star} = (F^{\star})^{i-1}(\bT_1^{\star})$.
\end{pa}

\begin{pa}
As above we have a projection map $\pi_1^{\star} : \bG^{\star} \to \bG_1^{\star}$ which restricts to an isomorphism $\bG^{\star F^{\star}} \to \bG_1^{\star F^{\star n}}$ of finite groups. In particular, $\pi_1^{\star}$ induces a bijection between the conjugacy classes of $\bG^{\star F^{\star}}$ and those of $\bG_1^{\star F^{\star n}}$. Recall that if $[s] \subseteq \bG^{\star F^{\star}}$ is the $\bG^{\star F^{\star}}$-conjugacy class of a semisimple element $s \in \bG^{\star F^{\star}}$ then we have a corresponding rational Lusztig series $\mathcal{E}(\bG^F,[s]) \subseteq \Irr(\bG^F)$. Moreover, we have a decomposition
\begin{equation*}
\Irr(\bG^F) = \bigsqcup_{[s] \subseteq \bG^{\star F^{\star}}} \mathcal{E}(\bG^F,[s])
\end{equation*}
where the union runs over all semisimple conjugacy classes. Similarly we have a decomposition of $\Irr(\bG_1^{F^n})$ into Lusztig series. The following shows these decompositions are compatible with the isometry $\pi_1^* : \Class(\bG_1^{F^n}) \to \Class(\bG^F)$.
\end{pa}

\begin{cor}\label{cor:L-series-cyc-perm}
Let $s \in \bG^{\star F^{\star}}$ be a semisimple element then the isometry $\pi_1^* : \Class(\bG_1^{F^n}) \to \Class(\bG^F)$ restricts to a bijection $\mathcal{E}(\bG_1^{F^n},[s_1]) \to \mathcal{E}(\bG^F,[s])$ where $s_1 = \pi_1^{\star}(s)$.
\end{cor}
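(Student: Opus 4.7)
The plan is to combine \cref{prop:comm-DL-ind} (specialised to the case where the Levi subgroup is a maximal torus) with the characterisation of Lusztig series in terms of Deligne--Lusztig induction from tori, after checking that the duality between $\bG$ and $\bG^{\star}$ is compatible with the factor structure.

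First I would record the relevant identifications. By \cref{pa:permuted-factors-2} the projection $\pi_1$ induces a bijection between $F$-stable maximal tori $\bT \leqslant \bG$ and $F^n$-stable maximal tori $\bT_1 \leqslant \bG_1$; explicitly, $\bT = \bT_1 \times F(\bT_1) \times \cdots \times F^{n-1}(\bT_1)$ with $\bT_1 = \pi_1(\bT)$, and $\pi_1$ restricts to a group isomorphism $\bT^F \xrightarrow{\sim} \bT_1^{F^n}$, so $\pi_1^*$ gives a bijection $\Irr(\bT_1^{F^n}) \xrightarrow{\sim} \Irr(\bT^F)$. The same discussion applied to $(\bG^{\star},F^{\star})$ yields an analogous bijection between $F^{\star}$-stable maximal tori $\bT^{\star} \leqslant \bG^{\star}$ and $F^{\star n}$-stable maximal tori $\bT_1^{\star} \leqslant \bG_1^{\star}$, together with isomorphisms $\bT^{\star F^{\star}} \xrightarrow{\sim} \bT_1^{\star F^{\star n}}$ and $\bG^{\star F^{\star}} \xrightarrow{\sim} \bG_1^{\star F^{\star n}}$; in particular $\pi_1^{\star}$ gives a bijection between the semisimple conjugacy classes on the two sides.

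Next I would verify that the standard duality isomorphism $\Theta_{\bT,\bG} : \bT^{\star F^{\star}} \xrightarrow{\sim} \Irr(\bT^F)$, see \cite[8.14, 11.15]{digne-michel:1991:representations-of-finite-groups-of-lie-type}, is compatible with $\pi_1$ and $\pi_1^{\star}$ in the sense that $\pi_1^* \circ \Theta_{\bT_1,\bG_1} \circ \pi_1^{\star} = \Theta_{\bT,\bG}$. This pairing is built intrinsically from the perfect $\mathbb{Z}$-pairing $X(\bT) \times \widecheck{X}(\bT) \to \mathbb{Z}$ by passing to $F^*$-coinvariants. Under the direct sum decomposition $X(\bT) = X(\bT_1) \oplus \cdots \oplus X(\bT_n)$, resp.\ $\widecheck{X}(\bT) = \widecheck{X}(\bT_1) \oplus \cdots \oplus \widecheck{X}(\bT_n)$, the endomorphism $F^*$, resp.\ its transpose, cyclically permutes the summands, so the $F^*$-coinvariants of $\widecheck{X}(\bT)$ are naturally identified with the $(F^*)^n$-coinvariants of $\widecheck{X}(\bT_1)$ via $\pi_1$ (and similarly on the other side); the asserted compatibility then drops out. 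This is the main obstacle, though entirely formal.

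Finally, recall that $\chi \in \mathcal{E}(\bG^F,[s])$ if and only if there exists an $F$-stable maximal torus $\bT \leqslant \bG$ and an element $s' \in \bT^{\star F^{\star}}$ with $s' \in [s]$ such that $\langle \chi, R_{\bT}^{\bG}(\Theta_{\bT,\bG}(s')) \rangle_{\bG^F} \neq 0$. Applying \cref{prop:comm-DL-ind} with $\bL_1 = \bT_1$ (and any Borel $\bP_1 \supseteq \bT_1$) together with the second step yields
\begin{equation*}
\pi_1^*\bigl( R_{\bT_1}^{\bG_1}(\Theta_{\bT_1,\bG_1}(s_1'))\bigr) = R_{\bT}^{\bG}\bigl(\pi_1^*(\Theta_{\bT_1,\bG_1}(s_1'))\bigr) = R_{\bT}^{\bG}(\Theta_{\bT,\bG}(s'))
\end{equation*}
where $s_1' = \pi_1^{\star}(s')$. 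Since $\pi_1^*$ is an isometry which bijects $\Irr(\bG_1^{F^n})$ with $\Irr(\bG^F)$, and since $\pi_1^{\star}$ bijects semisimple classes, taking $s_1 = \pi_1^{\star}(s)$ we conclude that $\pi_1^*$ restricts to a bijection $\mathcal{E}(\bG_1^{F^n},[s_1]) \to \mathcal{E}(\bG^F,[s])$, as required.
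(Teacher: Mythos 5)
Your proposal takes essentially the same approach as the paper: both reduce the statement to \cref{prop:comm-DL-ind} (applied with a maximal torus as the Levi) together with the compatibility of the Deligne--Lusztig duality correspondence with $\pi_1$ and $\pi_1^{\star}$, a verification the paper leaves to the reader and you sketch via the $F^*$-action on character lattices. The paper packages the duality step as a commutative square of orbit-space bijections citing \cite[13.13]{digne-michel:1991:representations-of-finite-groups-of-lie-type}, whereas you phrase it through the pairing $\Theta_{\bT,\bG}$, but these are the same thing.
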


\begin{proof}
Let $\mathcal{C}(\bG,F)$ denote the set of all pairs $(\bS,\theta)$ consisting of an $F$-stable maximal torus $\bS \leqslant \bG$ and an irreducible character $\theta \in \Irr(\bS^F)$. Note we have a bijection $\mathcal{C}(\bG_1,F^n) \to \mathcal{C}(\bG,F)$ defined by $(\bS_1,\theta_1) \mapsto (\bS,\theta) = (\bS_1F(\bS_1)\cdots F^{n-1}(\bS_1),\pi_1^*(\theta_1))$. Moreover, by \cref{prop:comm-DL-ind} we have $R_{\bS}^{\bG}(\pi_1^*(\theta_1)) = \pi_1^*(R_{\bS_1}^{\bG}(\theta_1))$. So $\pi_1^*$ maps the irreducible constituents of $R_{\bS_1}^{\bG}(\theta_1)$ onto those of $R_{\bS}^{\bG}(\theta)$.

Now let $\mathcal{S}(\bG^{\star},F^{\star})$ denote the set of all pairs $(\bS^{\star},s)$ consisting of an $F^{\star}$-stable maximal torus $\bS^{\star} \leqslant \bG^{\star}$ and a semisimple element $s \in \bS^{\star F^{\star}}$. Again we have a bijection $\mathcal{S}(\bG_1^{\star},F^{\star n}) \to \mathcal{S}(\bG^{\star},F^{\star})$ defined by $(\bS_1^{\star},s_1) \mapsto (\bS^{\star},s) =  (\bS_1^{\star}F^{\star}(\bS_1^{\star})\cdots (F^{\star})^{n-1}(\bS_1^{\star}),s_1F^{\star}(s_1)\cdots(F^{\star})^{n-1}(s_1))$. The statement follows once we know we have a commutative diagram
\begin{center}
\begin{tikzpicture}
\matrix (m) [matrix of math nodes, row sep=3em, column sep=3em, text height=1ex, text depth=0.25ex]
{ \mathcal{C}(\bG_1,F^n)/\bG_1^{F^n} & \mathcal{C}(\bG,F)/\bG^F\\
  \mathcal{S}(\bG_1^{\star},F^{\star n})/\bG_1^{\star F^{\star n}} & \mathcal{S}(\bG^{\star},F^{\star})/\bG^{\star F^{\star}}\\ };

\path [>=stealth,->]	(m-1-1) edge node[font=\small,above,text height=1.5ex,text depth=1ex,->] {} (m-1-2)
		(m-1-1) edge node[font=\small,left,text height=1.5ex,text depth=0.25ex] {} (m-2-1)
		(m-1-2) edge node[font=\small,right,text height=1.5ex,text depth=0.25ex] {} (m-2-2)
		(m-2-1) edge node[font=\small,below,text height=1.5ex,text depth=0.25ex] {} (m-2-2);
\end{tikzpicture}
\end{center}
between the orbits of the natural conjugation actions of the respective groups. Here the vertical maps are given by the bijection described in \cite[13.13]{digne-michel:1991:representations-of-finite-groups-of-lie-type}, see also \cite[6.7]{taylor:2016:action-of-automorphisms-symplectic}. We leave it as an exercise to the reader to verify the commutativity of this diagram.
\end{proof}

\section{Unipotent Supports}\label{sec:unip-supp}
\begin{pa}
For this section we place ourselves in the setup of \cref{pa:uni-supp-setup}. In what follows we will denote by $\bG_{\uni} = \{g_{\uni} \mid g \in \bG\} \subseteq \bG$ the closed subset consisting of unipotent elements. We note that $\bG_{\uni}$ is $F$-stable.
\end{pa}

\begin{thm}\label{thm:unip-supp}
Assume $p$ is a good prime for $\bG$ and $Z(\bG)$ is connected. If $\chi \in \Irr(\bG^F)$ is an irreducible character and $g \in \bG^F$ is such that $\chi(g) \neq 0$ then $g_{\uni} \in \overline{\mathcal{O}_{\chi}}$.
\end{thm}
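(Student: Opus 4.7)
The plan is to use the reduction techniques developed earlier in this paper to reduce \cref{thm:unip-supp} to the case where $\bG_{\der}$ is simple and simply connected, and then invoke the existing results of \cite{lusztig:1992:a-unipotent-support,achar-aubert:2007:supports-unipotents-de-faisceaux,taylor:2016:GGGRs-small-characteristics}.

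First, I would apply \cref{prop:quotient-by-sc-der} to obtain a smooth covering $\pi : \widetilde{\bG} \to \bG$ compatible with $F$. Since $Z(\bG)$ is connected and $p$ is good, $Z(\widetilde{\bG})$ is also connected (and smooth), and $\widetilde{\bG}_{\der}$ is simply connected. Because $\Ker(\pi)$ is a central torus, $\pi$ restricts to a surjection $\widetilde{\bG}^F \to \bG^F$ with central kernel and induces a bijection on unipotent classes compatible with $F$-stability and closures; moreover $\pi(\widetilde{g}_{\uni}) = \pi(\widetilde{g})_{\uni}$ for any $\widetilde{g}\in\widetilde{\bG}$. If $\widetilde{\chi}$ is any irreducible constituent of the pullback $\chi\circ\pi$, then its unipotent support $\mathcal{O}_{\widetilde{\chi}}$ is the preimage of $\mathcal{O}_{\chi}$ under $\pi$, so it suffices to prove the statement for $(\widetilde{\bG}, \widetilde{\chi}, \widetilde{g})$ with $\widetilde{g} \in \pi^{-1}(g)$.

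Second, I would decompose $\widetilde{\bG}_{\der} = \bH_1\times\cdots\times\bH_m$ into simple simply connected factors, grouped into $F$-orbits. For each orbit, \cref{lem:cyc-perm-smooth-embed} yields a smooth regular embedding into a cyclic product, and \cref{prop:comm-DL-ind,cor:L-series-cyc-perm} show that the resulting inflation along the projection onto the first factor commutes with Deligne--Lusztig induction and preserves Lusztig series. Combined with a standard reduction along the product decomposition, this brings us to the case where $\bG$ has simple simply connected derived subgroup, equipped with an appropriate Steinberg endomorphism (namely a suitable power of $F$).

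In this reduced setting, $\chi$ coincides (up to a non-zero scalar) with the characteristic function of an $F$-stable character sheaf $A$ on $\bG$, by Lusztig's theory applied under the hypotheses that $Z(\bG)$ is connected and $p$ is good. The combined results of \cite{achar-aubert:2007:supports-unipotents-de-faisceaux,lusztig:1992:a-unipotent-support,taylor:2016:GGGRs-small-characteristics} then show that the intersection of the support of $A$ with $\bG_{\uni}$ is contained in $\overline{\mathcal{O}_\chi}$, so the non-vanishing $\chi(g)\neq 0$ forces $g_{\uni}\in\overline{\mathcal{O}_\chi}$, as desired. The main obstacle I anticipate is the second reduction step: one must carefully track how the condition $\chi(g)\neq 0$ and the class $\mathcal{O}_\chi$ transport across the smooth covering and the cyclic product decomposition, since $\chi\circ\pi$ may split into several constituents, and one must verify via the Jordan decomposition of characters that all such constituents share the same unipotent support, so that the reduction is genuinely valid.
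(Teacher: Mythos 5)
Your reduction steps follow the paper's route fairly closely (smooth covering via \cref{prop:quotient-by-sc-der}, then regular embeddings via \cref{lem:completion-of-embedding}, then the cyclic--product reduction via \cref{lem:cyc-perm-smooth-embed}), and the obstacle you flag at the end is handled cleanly by the paper: for the covering $\pi : \widetilde{\bG}\to\bG$ the pullback $\chi\circ\pi$ is \emph{always} irreducible (inflation along a surjection), while for a regular embedding $\iota : \bG\to\widetilde{\bG}$ the identity $\widetilde{\bG}^F = Z(\widetilde{\bG})^F\cdot\iota(\bG)^F$ forces every irreducible $\widetilde\chi$ to restrict to an integer multiple of a single irreducible of $\bG^F$, so the transport of $\mathcal{O}_\chi$ and of non-vanishing is unambiguous (\cref{lem:uni-supp-embed,lem:uni-supp-smooth-cover}). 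You also omit two technical points the paper needs: a further application of \cref{lem:completion-of-embedding,lem:uni-supp-embed} to replace $\SL_n$ by $\GL_n$, and the observation that goodness of $p$ for the simple simply connected group forces $F$ to be a Frobenius (not merely a Steinberg) endomorphism, which is required for the cited results to apply. The references to \cref{prop:comm-DL-ind,cor:L-series-cyc-perm} are not needed here.

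The genuine gap is in the final step. Your claim that $\chi$ ``coincides (up to a non-zero scalar) with the characteristic function of an $F$-stable character sheaf'' is false in general: with connected centre and good $p$, it is the \emph{almost characters} (certain unitary combinations of irreducibles within a Lusztig family) that match characteristic functions of character sheaves up to scalar, not individual irreducible characters. Consequently the deduction that the support of $A$ meets $\bG_{\uni}$ inside $\overline{\mathcal{O}_\chi}$ does not yield the desired conclusion about $\chi$ itself. The paper instead argues directly with $\chi$, following Lusztig's \cite[11.2(iv)]{lusztig:1992:a-unipotent-support}: one reduces to the case where the class $\mathcal{O}_u$ of $u = g_{\uni}$ is minimal, in the sense that $\chi$ vanishes on any $h\in\bG^F$ with $h_{\uni}$ lying in a class $\mathcal{O}\neq\mathcal{O}_u$ satisfying $\mathcal{O}_u\subseteq\overline{\mathcal{O}}$. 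This places one in the situation of \cite[9.1]{lusztig:1992:a-unipotent-support}, and \cite[9.2]{lusztig:1992:a-unipotent-support} gives $\chi|_{\mathcal{O}_u^F}\neq 0$; then \cite[9.10]{lusztig:1992:a-unipotent-support} produces $v\in\mathcal{O}_u^F$ with $\langle\Gamma_v,\chi^*\rangle\neq 0$, where $\chi^*$ is the Alvis--Curtis dual, and the theorem of Achar--Aubert \cite[Th\'eor\`eme 9.1]{achar-aubert:2007:supports-unipotents-de-faisceaux} (as extended in \cite[14.15, 15.2]{taylor:2016:GGGRs-small-characteristics}) then forces $\mathcal{O}_u\subseteq\overline{\mathcal{O}_\chi}$. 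You will need to replace your character-sheaf endgame by this Gelfand--Graev/Alvis--Curtis argument.
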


\begin{lem}\label{lem:uni-supp-embed}
Assume $Z(\bG)$ is connected and $\iota : \bG \to \widetilde{\bG}$ is a regular embedding then \cref{thm:unip-supp} holds for all irreducible characters of $\bG^F$ if and only if it holds for all irreducible characters of $\widetilde{\bG}^F$.
\end{lem}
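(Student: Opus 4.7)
The plan is to show that, when $Z(\bG)$ is connected, the regular embedding $\iota$ transports all of the data appearing in \cref{thm:unip-supp} faithfully between $\bG$ and $\widetilde{\bG}$. The key structural input is obtained by applying Lang--Steinberg to the surjection $\iota(\bG) \times Z^{\circ}(\widetilde{\bG}) \to \widetilde{\bG}$, $(\iota(g),z) \mapsto \iota(g)z$, whose kernel $\iota(Z(\bG))$ is connected by hypothesis: this yields $\widetilde{\bG}^F = \iota(\bG^F)\cdot Z^{\circ}(\widetilde{\bG})^F$. Consequently the conjugation action of $\widetilde{\bG}^F$ on $\bG^F$ is inner, so for every $\tilde{\chi} \in \Irr(\widetilde{\bG}^F)$ the restriction $\chi := \tilde{\chi}|_{\iota(\bG^F)}$ is an irreducible character of $\bG^F$, and every $\chi \in \Irr(\bG^F)$ is obtained in this way.

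The second step is to check that $\iota$ preserves the ingredients of the unipotent support. Since $\iota(\bG_{\der}) = \widetilde{\bG}_{\der}$ contains every unipotent element, $\iota$ induces an $F$-equivariant bijection $\bG_{\uni} \to \widetilde{\bG}_{\uni}$ which preserves conjugacy classes (as $\widetilde{\bG} = \iota(\bG) Z^{\circ}(\widetilde{\bG})$ acts on $\widetilde{\bG}_{\uni}$ through $\iota(\bG)$). The same identity, combined with the connectedness of $\iota(Z(\bG))$, gives $C_{\widetilde{\bG}}(\iota(u))^{\circ} = \iota(C_{\bG}(u)^{\circ}) Z^{\circ}(\widetilde{\bG})$ and hence a canonical $F$-equivariant isomorphism $A_{\widetilde{\bG}}(\iota(u)) \cong A_{\bG}(u)$ for every $u \in \bG_{\uni}$; together with $\widetilde{\bG}^F = \iota(\bG^F)Z^{\circ}(\widetilde{\bG})^F$ it also shows that the partition of $\mathcal{O}^F$ into $\bG^F$-classes coincides with its partition into $\widetilde{\bG}^F$-classes. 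Since $\tilde{\chi}(\iota(u)) = \chi(u)$, the formula of \cref{pa:uni-supp-setup} then gives $\AV(\iota(\mathcal{O}),\tilde{\chi}) = \AV(\mathcal{O},\chi)$ for every $F$-stable unipotent class $\mathcal{O} \subseteq \bG$, whence $\iota(\mathcal{O}_{\chi}) = \mathcal{O}_{\tilde{\chi}}$.

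With these dictionaries the two implications are immediate. For the backward direction, given $\chi \in \Irr(\bG^F)$ with $\chi(g) \neq 0$, lift $\chi$ to some $\tilde{\chi} \in \Irr(\widetilde{\bG}^F)$; then $\tilde{\chi}(\iota(g)) = \chi(g) \neq 0$, so by hypothesis $\iota(g_{\uni}) = \iota(g)_{\uni} \in \overline{\mathcal{O}_{\tilde{\chi}}} = \iota(\overline{\mathcal{O}_{\chi}})$, which yields $g_{\uni} \in \overline{\mathcal{O}_{\chi}}$. For the forward direction, given $\tilde{\chi} \in \Irr(\widetilde{\bG}^F)$ and $\tilde{g} = \iota(g)z$ with $z \in Z^{\circ}(\widetilde{\bG})^F$ and $\tilde{\chi}(\tilde{g}) \neq 0$, the central character identity $\tilde{\chi}(\iota(g)z) = \omega_{\tilde{\chi}}(z)\tilde{\chi}(\iota(g))$ shows $\chi(g) \neq 0$, and applying the hypothesis on $\bG^F$ together with $\tilde{g}_{\uni} = \iota(g_{\uni})$ gives the desired conclusion. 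The only point with any real content is the identification of unipotent classes, component groups, and average values; once these are in place everything else reduces to Lang--Steinberg and the functoriality of the Jordan decomposition, so I expect no substantive obstacle beyond careful bookkeeping.
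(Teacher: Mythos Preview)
Your argument is correct and follows essentially the same route as the paper's proof: both hinge on the decomposition $\widetilde{\bG}^F = \iota(\bG^F)\cdot Z(\widetilde{\bG})^F$, the resulting compatibility of restriction with irreducibility, the central-character identity $\tilde{\chi}(\iota(g)z) = \omega_{\tilde{\chi}}(z)\tilde{\chi}(\iota(g))$, and the identification $\iota(\mathcal{O}_{\chi}) = \mathcal{O}_{\tilde{\chi}}$. The only differences are cosmetic: the paper invokes Clifford's theorem to say $\tilde{\chi}|_{\iota(\bG^F)} = e\chi'$ with $\chi'$ irreducible (in fact $e=1$ here, as you observe, since $Z(\widetilde{\bG})^F$ is central and hence acts by scalars), and the paper cites \cite[5.1]{geck:1996:on-the-average-values} for the unipotent-support identification whereas you rederive it directly from the average-value formula. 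One small wording point: ``conjugation is inner'' alone only gives that the restriction is isotypic; it is the centrality of $Z^{\circ}(\widetilde{\bG})^F$ that forces multiplicity one, so you may want to phrase that step accordingly.
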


\begin{proof}
As remarked in \cite[1.7.6(c)]{geck-malle:2016:reductive-groups-and-steinberg-maps} we have $\widetilde{\bG}^F = Z(\widetilde{\bG})^F\cdot \iota(\bG)^F$. This implies that any class function on $\iota(\bG)^F$ is invariant under conjugation by $\widetilde{\bG}^F$. Hence, if $\widetilde{\chi} \in \Irr(\widetilde{\bG}^F)$ is an irreducible character and $\chi = \widetilde{\chi}\circ\iota$ is the restriction to $\bG^F$ then Clifford's Theorem implies that $\chi' = \frac{1}{e}\chi \in \Irr(\bG^F)$ for some integer $e\geqslant 1$.

Now $\iota$ restricts to an isomorphism of varieties $\bG_{\uni} \to \widetilde{\bG}_{\uni}$ and induces a bijection between the unipotent conjugacy classes of $\bG$ and $\widetilde{\bG}$. If $\widetilde{g} \in \widetilde{\bG}^F$ then we may write this as $\iota(g)z$ with $g \in \bG^F$ and $z\in Z(\widetilde{\bG}^F) = Z(\widetilde{\bG})^F$. As $z$ is central we have $\widetilde{\chi}(\widetilde{g}) = \omega_{\widetilde{\chi}}(z)\widetilde{\chi}(\iota(g))$ where $\omega_{\widetilde{\chi}}(z) = \widetilde{\chi}(z)/\widetilde{\chi}(1) \neq 0$. Hence, we have $\widetilde{\chi}(\widetilde{g}) \neq 0$ if and only if $\chi'(g) \neq 0$. Now, clearly $\widetilde{g}_{\uni} = \iota(g_{\uni})$ and as $\mathcal{O}_{\widetilde{\chi}} = \iota(\mathcal{O}_{\chi'})$, see the proof of \cite[5.1]{geck:1996:on-the-average-values}, the statement follows.
\end{proof}

\begin{lem}\label{lem:uni-supp-smooth-cover}
Assume $Z(\bG)$ is connected and let $\pi : \widetilde{\bG} \to \bG$ be a smooth covering then \cref{thm:unip-supp} holds for all irreducible characters of $\bG^F$ if it holds for all irreducible characters of $\widetilde{\bG}^F$.
\end{lem}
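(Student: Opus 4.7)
The plan is to inflate $\chi$ along $\pi$, apply the hypothesis in $\widetilde\bG$, and transport the resulting inclusion back to $\bG$. Since $\Ker(\pi)$ is a connected $F$-stable torus, Lang--Steinberg yields a short exact sequence $1 \to \Ker(\pi)^F \to \widetilde\bG^F \to \bG^F \to 1$, so $\tilde\chi := \chi \circ \pi \in \Irr(\widetilde\bG^F)$. Given $g \in \bG^F$ with $\chi(g) \neq 0$, I would choose any lift $\tilde g \in \widetilde\bG^F$; then $\tilde\chi(\tilde g) = \chi(g) \neq 0$, and because $\pi$ is a central homomorphism it respects Jordan decomposition, so $\pi(\tilde g_{\uni}) = g_{\uni}$. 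Applying the hypothesis to $\widetilde\bG$ gives $\tilde g_{\uni} \in \overline{\mathcal{O}_{\tilde\chi}}$, and since $\pi$ is a morphism of varieties we obtain $g_{\uni} \in \overline{\pi(\mathcal{O}_{\tilde\chi})}$. It therefore suffices to prove that $\pi(\mathcal{O}_{\tilde\chi}) = \mathcal{O}_\chi$.

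To do this I would establish a bijective dictionary between $F$-stable unipotent classes of $\widetilde\bG$ and $\bG$ under which the invariants used to characterise the unipotent support---namely the dimension and the average value $\AV$---are preserved. First, since $\Ker(\pi)$ is a torus it contains no nontrivial unipotent elements, so $\pi$ restricts to an injection $\widetilde\bG_{\uni} \hookrightarrow \bG_{\uni}$; surjectivity follows by extracting the unipotent part of any lift, and a routine conjugation argument (using surjectivity of $\pi\colon \widetilde\bG^F \to \bG^F$) then promotes this to bijections between the sets of unipotent classes and, class-by-class, between the $\widetilde\bG^F$-orbits on $\widetilde{\mathcal{O}}^F$ and the $\bG^F$-orbits on $\mathcal{O}^F$.

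The crux, and the main obstacle, is identifying the component groups. I would prove $C_{\widetilde\bG}(\tilde u) = \pi^{-1}(C_\bG(u))$ via the following observation: for any lift $\tilde g$ of an element $g \in C_\bG(u)$ the commutator $c := \tilde g \tilde u \tilde g^{-1} \tilde u^{-1}$ lies in $\Ker(\pi) \subseteq Z(\widetilde\bG)$ and is therefore semisimple, while $\tilde g \tilde u \tilde g^{-1} = c \cdot \tilde u$ is unipotent, so uniqueness of the Jordan decomposition forces $c = 1$. Since $\Ker(\pi)$ is connected, passing to component groups yields an $F$-equivariant isomorphism $A_{\widetilde\bG}(\tilde u) \cong A_\bG(u)$, whence the indices $[A(\tilde u):A(\tilde u)^F]$ match those for $u$. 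Combined with $\tilde\chi(\tilde u) = \chi(u)$ and $\dim \widetilde{\mathcal{O}} = \dim \mathcal{O}$, this gives $\AV(\widetilde{\mathcal{O}}, \tilde\chi) = \AV(\mathcal{O}, \chi)$ for corresponding classes, and the characterisation of the unipotent support as the unique class of maximal dimension with nonzero average value then forces $\pi(\mathcal{O}_{\tilde\chi}) = \mathcal{O}_\chi$, completing the reduction.
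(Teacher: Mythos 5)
Your proposal is correct and follows the same overall route as the paper: inflate $\chi$ to $\widetilde\chi=\chi\circ\pi$, use Lang--Steinberg to get surjectivity of $\pi$ on rational points, observe that $\pi$ gives a bijection on unipotent elements and classes, and reduce everything to the key identity $\pi(\mathcal{O}_{\widetilde\chi})=\mathcal{O}_\chi$. The one place you diverge is in how that identity is established: the paper simply refers to the proof of \cite[5.2]{geck:1996:on-the-average-values}, whereas you supply a self-contained verification. Your argument there is sound and is essentially the content of the cited result: the commutator trick shows $C_{\widetilde{\bG}}(\tilde u)=\pi^{-1}(C_{\bG}(u))$, connectedness of $\Ker(\pi)$ then yields an $F$-equivariant isomorphism $A_{\widetilde{\bG}}(\tilde u)\cong A_{\bG}(u)$, the $\bG^F$-classes in $\mathcal{O}^F$ match up with the $\widetilde{\bG}^F$-classes in $\widetilde{\mathcal{O}}^F$, and together with $\widetilde\chi(\tilde u)=\chi(u)$ and $\dim\widetilde{\mathcal{O}}=\dim\mathcal{O}$ this forces $\AV(\widetilde{\mathcal{O}},\widetilde\chi)=\AV(\mathcal{O},\chi)$, hence $\pi(\mathcal{O}_{\widetilde\chi})=\mathcal{O}_\chi$. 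So the net effect is that you have unpacked a black-box citation into a direct argument, which is a reasonable thing to do when writing a blind proof; it does not change the logical structure of the reduction.
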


\begin{proof}
Note that $\pi$ restricts to a bijection $\widetilde{\bG}_{\uni} \to \bG_{\uni}$ and induces a bijection between the unipotent conjugacy classes of $\widetilde{\bG}$ and those of $\bG$ because $\Ker(\pi)$ is a central torus. As $\Ker(\pi)$ is connected an easy application of the Lang--Steinberg theorem shows that $\pi$ restricts to a surjective homomorphism $\pi : \widetilde{\bG}^F \to \bG^F$. Hence, given an irreducible character $\chi \in \Irr(\bG^F)$ we have the inflation $\widetilde{\chi} = \chi\circ\pi \in \Irr(\widetilde{\bG}^F)$ is also irreducible. Now assume $\widetilde{g} \in \widetilde{\bG}^F$ then by definition we have $\widetilde{\chi}(\widetilde{g}) = \chi(g)$ where $g = \pi(\widetilde{g}) \in \bG^F$ so $\widetilde{\chi}(\widetilde{g}) \neq 0$ if and only if $\chi(g) \neq 0$. From the proof of \cite[5.2]{geck:1996:on-the-average-values} we see that $\pi(\mathcal{O}_{\widetilde{\chi}}) = \mathcal{O}_{\chi}$ so the conclusion of \cref{thm:unip-supp} holds for $\chi$ if and only if it holds for $\widetilde{\chi}$.
\end{proof}

\begin{proof}[of \cref{thm:unip-supp}]
By \cref{prop:quotient-by-sc-der} there exists a smooth covering $\pi : \widetilde{\bG} \to \bG$ and by \cref{lem:uni-supp-smooth-cover} the statement holds in $\bG$ if it holds in $\widetilde{\bG}$. Hence, we can assume that $\bG$ has a simply connected derived subgroup. As the derived subgroup is simply connected we have $\bG_{\der} = \bG_{\der}^{(1)} \times \cdots \times \bG_{\der}^{(r)}$ where each $\bG_{\der}^{(i)}$ is a direct product of simple groups transitively permuted by $F$. Now assume chosen a regular embedding $\bG_{\der}^{(i)} \hookrightarrow \widetilde{\bG}^{(i)}$ then taking $\widetilde{\bG} = \widetilde{\bG}^{(1)} \times \cdots \times \widetilde{\bG}^{(r)}$ we get a regular embedding $\bG_{\der} \hookrightarrow \widetilde{\bG}$. By \cref{lem:completion-of-embedding} we obtain a commutative diagram
\begin{equation*}
\begin{tikzpicture}
\matrix (m) [matrix of math nodes, row sep=3em, column sep=3em, text height=1.5ex, text depth=0.25ex]
{ \bG_{\der} & \bG\\
  \widetilde{\bG} & \bG'\\ };

\path [>=stealth,->]	(m-1-1) edge node[font=\small,above,text height=1.5ex,text depth=0.25ex,->] {} (m-1-2)
		(m-1-1) edge node[font=\small,left,text height=1.5ex,text depth=0.25ex] {} (m-2-1)
		(m-1-2) edge node[font=\small,right,text height=1.5ex,text depth=0.25ex] {$\sigma_1$} (m-2-2)
		(m-2-1) edge node[font=\small,below,text height=1.5ex,text depth=0.25ex] {$\sigma_2$} (m-2-2);
\end{tikzpicture}
\end{equation*}
where $\sigma_1 : \bG \to \bG'$ and $\sigma_2 : \widetilde{\bG} \to \bG'$ are smooth regular embeddings. Applying \cref{lem:uni-supp-embed} twice we see that the desired statement holds for $\bG$ if and only if it holds for $\widetilde{\bG}$. As the statement is clearly compatible with respect to direct products it thus suffices to prove the statement for each $\widetilde{\bG}^{(i)}$.

Note that we have free reign when choosing the regular embedding $\bG^{(i)} \hookrightarrow \widetilde{\bG}^{(i)}$. Hence, by \cref{lem:cyc-perm-smooth-embed} we may assume that $\bG$ is a direct product $\bG_1 \times \cdots \times \bG_n$ where $(\bG_i)_{\der}$ is simple and simply connected and $F$ is such that $F(\bG_i) = \bG_{i+1}$. Now assume $\chi_1 \in \Irr(\bG_1^{F^n})$ and let $\chi = \chi_1\circ\pi_1$ be the inflation through the projection map $\pi_1 : \bG \to \bG_1$. Note that $\pi_1$ restricts to a bijection $\bG_{\uni}^F \to (\bG_1)_{\uni}^{F^n}$ and induces a bijection between the $F$-stable unipotent conjugacy class of $\bG$ and the $F^n$-stable unipotent conjugacy classes of $\bG_1$. It's clear that $\pi_1(\mathcal{O}_{\chi}) = \mathcal{O}_{\chi_1}$ and the statement holds for $\chi_1$ if and only if it holds for $\chi$. With this we can assume that $\bG$ has a simple and simply connected derived subgroup.

Applying the same trick as above, i.e., \cref{lem:completion-of-embedding,lem:uni-supp-embed}, we may assume that if $\bG_{\der} \cong \SL_n(\mathbb{K})$ then $\bG = \GL_n(\mathbb{K})$. Moreover, as $p$ is a good prime for $\bG$ we have $F$ must be a Frobenius endomorphism. With this assumption we have all the results of \cite{taylor:2016:GGGRs-small-characteristics} are available to us. In particular, we may freely apply all the results of \cite{lusztig:1992:a-unipotent-support}.

%
%

We can now proceed to mimic the proof of \cite[11.2(iv)]{lusztig:1992:a-unipotent-support}. Let $\chi \in \Irr(\bG^F)$ be an irreducible character and $g \in \bG^F$ an element such that $\chi(g) \neq 0$. We set $u = g_{\uni} \in \bG_{\uni}^F$ and denote by $\mathcal{O}_u$ the $\bG$-conjugacy class containing $u$. It suffices to prove the statement assuming that $\mathcal{O}_u$ satisfies the following property. If $\mathcal{O} \neq \mathcal{O}_u$ is a unipotent conjugacy class such that $\mathcal{O}_u \subseteq \overline{\mathcal{O}}$ then we have $\chi(h) = 0$ for any element $h \in \bG^F$ such that $h_{\uni} \in \mathcal{O}$.

This is precisely the assumption made in \cite[9.1]{lusztig:1992:a-unipotent-support} so we may apply \cite[9.2]{lusztig:1992:a-unipotent-support} to deduce that $\chi|_{\mathcal{O}_u^F} \neq 0$. Now, to each unipotent element $v \in \bG_{\uni}^F$ we have a corresponding generalised Gelfand--Graev representation $\Gamma_v$ of $\bG^F$. By \cite[9.10]{lusztig:1992:a-unipotent-support} there exists an element $v\in \mathcal{O}_u^F$ such that $\langle \Gamma_v, \chi^*\rangle \neq 0$, where $\chi^* = \pm D_{\bG^F}(\chi) \in \Irr(\bG^F)$ is the Alvis--Curtis dual of $\chi$. A result of Achar--Aubert thus implies that $\mathcal{O}_u \subseteq \overline{\mathcal{O}_{\chi}}$ which completes the proof, see \cite[Th\'eor\`eme 9.1]{achar-aubert:2007:supports-unipotents-de-faisceaux} and \cite[14.15, 15.2]{taylor:2016:GGGRs-small-characteristics}.
\end{proof}

\setstretch{0.96}
\renewcommand*{\bibfont}{\small}
\printbibliography
\end{document}